\documentclass{article}
\usepackage{amsmath, amsthm, amssymb}
\usepackage[margin=0.8in]{geometry}
\usepackage{parskip}
\usepackage{hyperref}
\usepackage{graphicx} % Required for inserting images
\usepackage{xcolor}
\usepackage{subcaption}

\newenvironment{claimproof}[1][\proofname]{%
  \begin{proof}[#1]%
}{%
  \end{proof}%
}

\newcommand{\intI}{\mathcal{I}}

\newcommand{\vomega}{\vec{\omega}}
\newcommand{\vchi}{\vec{\chi}}

\newtheorem{lemma}{Lemma}
\newtheorem{cor}[lemma]{Corollary}
\newtheorem{theorem}[lemma]{Theorem}

\newtheorem{definition}[lemma]{Definition}
\newtheorem{obs}[lemma]{Observation}
\newtheorem{claim}[lemma]{Claim}

\title{Crossing tournaments are polynomially $\vec{\chi}$-bounded}

\usepackage{authblk}

\author[1]{Lila Crew}
\author[1]{Xinyue Fan}
\author[1]{Hidde Koerts}
\author[2]{Benjamin Moore}
\author[1]{Sophie Spirkl\thanks{Emails: (lcrew, xinyue.fan, hkoerts, sspirkl)@uwaterloo.ca, Ben.Moore@umanitoba.ca \\
 We acknowledge the support of the Natural Sciences and Engineering Research Council of Canada (NSERC), [funding reference numbers RGPIN-2020-03912 and RGPIN-2022-03093].
Cette recherche a \'et\'e financ\'ee par le Conseil de recherches en sciences naturelles et en g\'enie du Canada (CRSNG), [num\'eros de r\'ef\'erence RGPIN-2020-03912 et RGPIN-2022-03093]. This project was funded in part by the Government of Ontario. Benjamin Moore also acknowledges the support of NSERC grant RGPIN-2025-07125.  This research was conducted while Spirkl was an Alfred P. Sloan Fellow.  This research was undertaken, in part, thanks to
funding from the Canada Research Chairs Program.}
}

\affil[1]{University of Waterloo, Department of Combinatorics and Optimization, Waterloo, Canada}

\affil[2]{University of Manitoba, Department of Mathematics, Winnipeg, Canada}

\date{\today}

\begin{document}

\maketitle

\begin{abstract}
    Given a tournament $T$, Aboulker, Aubian, Charbit, and Lopes (2023) defined its clique number $\vec{\omega}(T)$ as the minimum clique number of a backedge graph of $T$, and raised the question: Which classes of tournaments are polynomially $\vec{\chi}$-bounded? Aboulker, Duron, Jacob, Kimbrough, Thomass\'{e}, and this work's authors (2026) showed that this holds for classes of tournaments whose arc sets may be written as the union of a bounded number of comparability digraphs.

    What about classes of tournaments that do not admit such a decomposition? The crossing tournaments of Nguyen, Scott, and Seymour (2025) are an example of such a class, as shown in the aforementioned 2026 work; we show that nonetheless crossing tournaments are polynomially $\vec{\chi}$-bounded by adapting a method of Davies and McCarty (2021) and Davies (2022).
    We additionally show that we cannot extend this result for crossing tournaments to tournaments with chordal graphs as backedge graphs.
\end{abstract}

\section{Introduction}
Throughout this work we use common graph theory terminology and notation as found in \cite{diestel}. We consider only finite and simple graphs, without loops and parallel edges. For a natural number $k$, we write $[k]$ for the set $\{1, \dots, k\}$.  A \textbf{proper $k$-colouring of a graph} $G$ is a function $f: V(G) \rightarrow [k]$ such that whenever $uv \in E(G)$, we have $f(u) \neq f(v)$. Given a proper $k$-colouring of $G$, we will refer to $f^{-1}(i)$ for $i \in [k]$ as a \textbf{colour class} (of $G$ with respect to $f$). The \textbf{chromatic number of} $G$ is the smallest $k$ such that $G$ has a proper $k$-colouring, and is denoted $\chi(G)$. A \textbf{clique} of $G$ is a set $A \subseteq V(G)$ such that for all $u,v \in A$, $uv \in E(G)$. The \textbf{clique number of} $G$ is the largest size of a clique of $G$, and is denoted $\omega(G)$.

It is straightforward to see that for any graph $G$, we have $\omega(G) \leq \chi(G)$, as any pair of vertices of a clique must receive different colours in a proper colouring. A graph $G$ is called \textbf{perfect} if for every induced subgraph $H$ of $G$, we have $\omega(H) = \chi(H)$. The celebrated Strong Perfect Graph Theorem was proved in 2002 and completely characterizes perfect graphs.

\begin{theorem}[Chudnovsky, Robertson, Seymour and Thomas \cite{strongperfect}]\label{thm:strongperfect}

A graph $G$ is perfect if and only if for all odd $n \geq 5$, the graph $G$ does not contain an $n$-vertex cycle or its complement as an induced subgraph. 
    
\end{theorem}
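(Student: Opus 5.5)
The direction ``perfect implies no odd hole or odd antihole'' is immediate and I would do it first. If $G$ contains an induced cycle $C_n$ with $n\ge 5$ odd, then $\omega(C_n)=2<3=\chi(C_n)$, so $G$ is not perfect. If $G$ contains the complement $\overline{C_n}$, then $\omega(\overline{C_n})=\lfloor n/2\rfloor$. Since $\alpha(\overline{C_n})=2$, every colour class has size at most $2$, so $\chi(\overline{C_n})\ge\lceil n/2\rceil>\omega(\overline{C_n})$, and again $G$ is not perfect.

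For the converse, call $G$ \emph{Berge} if it has no odd hole and no odd antihole. I would follow the structural route of Chudnovsky, Robertson, Seymour and Thomas. Assume there is a Berge graph that is not perfect, and take $G$ to be a minimally imperfect Berge graph. By Lov\'asz's (weak) perfect graph theorem, $\overline{G}$ is also minimally imperfect, and the Berge class is closed under complementation. So every property we derive for $G$ also holds for $\overline{G}$.

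The heart of the proof is a decomposition theorem: every Berge graph either
\begin{itemize}
\item belongs to a basic class (bipartite graphs, line graphs of bipartite graphs, the complements of both, and double split graphs), or
\item admits a proper $2$-join in $G$ or in $\overline{G}$, or a balanced skew partition.
\end{itemize}
Basic graphs are perfect by classical results: K\H{o}nig's theorems for bipartite graphs and their line graphs, and direct checks for double split graphs. Their complements are then perfect by Lov\'asz. Next I would show that a minimally imperfect graph admits none of these decompositions:
\begin{itemize}
\item For a $2$-join, I would build two smaller Berge ``blocks'' obtained by replacing each side with a short path of suitable parity. The blocks are perfect by minimality, and I would glue optimal colourings of the blocks along the join to colour $G$ optimally.
\item The complement case follows by applying the same argument to $\overline{G}$.
\item For balanced skew partitions, I would strengthen Chv\'atal's star-cutset lemma. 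The key is to use the parity condition (balancedness) to show that a minimal counterexample cannot have one.
\end{itemize}
This rules out every case, so $G$ cannot exist.

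The main obstacle is proving the decomposition theorem itself. I would first handle Berge graphs containing a line graph of a bipartite subdivision of $K_4$ (or its complement), showing such graphs are either line graphs of bipartite graphs or decompose. Next I would treat graphs containing a prism or an even wheel, which forces a $2$-join or a skew partition. The last case is graphs containing none of these configurations; there a long case analysis of attachments to a fixed induced subgraph should force the graph to be basic. Throughout, the repeated difficulty is controlling how a vertex or path attaches to a carefully chosen structure while the odd-hole and odd-antihole exclusions prune the cases.
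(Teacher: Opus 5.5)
The paper gives no proof of this theorem; it is quoted from Chudnovsky, Robertson, Seymour and Thomas, and your proposal is an accurate outline of their argument: the easy direction is complete as written, and the hard direction has their architecture (a minimal Berge counterexample, closed under complementation by Lov\'asz's theorem, a decomposition into basic classes, $2$-joins in $G$ or $\overline{G}$ and balanced skew partitions, and the fact that a minimal counterexample admits none of these), with the real work deferred to the decomposition theorem. Two small corrections if you follow their route literally: the decomposition theorem in the Annals paper also allows a proper homogeneous pair, which is disposed of by the Chv\'atal--Sbihi theorem that minimally imperfect graphs have no homogeneous pair (the version you state, without that outcome, is Chudnovsky's later strengthening via trigraphs); and if your $2$-join blocks use short marker paths they are not induced subgraphs of $G$, so you should take $G$ to be an imperfect Berge graph with the fewest vertices rather than merely minimally imperfect.
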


A next natural step up from perfect graphs is $\chi$-bounded graphs \cite{gyarfas1987problems}: A hereditary class $\mathcal{C}$ of graphs is said to be \textbf{$\chi$-bounded} if there exists a function $f: \mathbb{Z}^+ \rightarrow \mathbb{Z}^+$ such that for each $G \in \mathcal{C}$, we have $\chi(G) \leq f(\omega(G))$. The class $\mathcal{C}$ is further said to be \textbf{polynomially $\chi$-bounded} if $f$ may be chosen to be a polynomial in $\omega(G)$.

The problem of understanding which families of graphs are $\chi$-bounded is a very active area of study (see for example, \cite{cliquewidth,  bourneuf2025bounded, bousquetspaper, GroundedLgraphs,Lgraphs,survey}). One operation that plays well with $\chi$-boundedness is substitution of a graph $H$ in for a vertex $v$, which can be pictured as ``blowing up'' the vertex $v$ into a copy of $H$, and keeping adjacencies to $v$ as adjacencies to all vertices in $H$. Given a class $\mathcal{C}$ of graphs, the \textbf{substitution closure of $\mathcal{C}$}, notated as $\mathcal{C}^{subst}$, is the smallest collection of graphs containing $\mathcal{C}$ which has the property that substituting any graph in $\mathcal{C}^{subst}$ for a vertex of another (possibly identical) graph in $\mathcal{C}^{subst}$ at any vertex results in a graph in $\mathcal{C}^{subst}$ (that is, it is closed under substitution). The following result was proved in 2013.

\begin{theorem}[Chudnovsky, Penev, Scott, and Trotignon \cite{substitution}, Theorem 2.3] \label{thm:subst}

Let $\mathcal{C}$ be a polynomially $\chi$-bounded class of graphs. Then $\mathcal{C}^{subst}$ is also polynomially $\chi$-bounded.
    
\end{theorem}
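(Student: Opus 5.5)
The plan is to reduce the statement to a \emph{weighted} colouring bound for graphs in $\mathcal{C}$, and then prove that bound by sorting vertices into dyadic weight levels.

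\textbf{Reduction.} We may assume $\mathcal{C}$ is hereditary: the hereditary closure of a polynomially $\chi$-bounded class is again one, with the same function. We may also fix $c \ge 1$ with $\chi(H) \le \omega(H)^c$ for all $H \in \mathcal{C}$. We then prove, by induction on $|V(G)|$, that every $G \in \mathcal{C}^{subst}$ satisfies $\chi(G) \le A\,\omega(G)^d$ for suitable constants $A,d$ depending only on $c$; we expect $d \approx 2c+2$.

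We use the standard structural fact that every $G \in \mathcal{C}^{subst}$ with more than one vertex either lies in $\mathcal{C}$ or arises from some $H \in \mathcal{C}$, with $|V(H)| \ge 2$, by substituting smaller graphs $G_v \in \mathcal{C}^{subst}$ for the vertices $v \in V(H)$. Put $w(v) = \omega(G_v)$. Two observations follow.
\begin{itemize}
\item $\omega(G) = W$, where $W$ is the maximum $w$-weight of a clique of $H$.
\item $\chi(G)$ is at most the weighted chromatic number of $H$ with weights $\chi(G_v) \le A\,w(v)^d$. Here each vertex $v$ needs a set of $\chi(G_v)$ colours, and adjacent vertices need disjoint sets.
\end{itemize}
So it suffices to show that $H$ admits such a weighted colouring with at most $A W^d$ colours.

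\textbf{Dyadic levels.} For $j \ge 0$ let $L_j = \{v : W/2^{j+1} < w(v) \le W/2^j\}$. The top level $L_0$ is stable in $H$, since two adjacent vertices there would give a clique of weight greater than $W$. Hence $L_0$ needs only $\max_{v \in L_0} A\,w(v)^d \le A W^d$ colours, used in parallel across its vertices.

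For $j \ge 1$, any clique of $H[L_j]$ has fewer than $2^{j+1}$ vertices, so $\chi(H[L_j]) \le 2^{(j+1)c}$. Blowing each colour class of $H[L_j]$ up to $A(W/2^j)^d$ colours gives a weighted colouring of $L_j$ with
\[
A\,2^{(j+1)c}(W/2^j)^d = A\,2^c\,W^d\,2^{-j(d-c)}
\]
colours. Summing over $j \ge 1$, the lower levels together cost at most $A W^d \cdot 2^{2c+1-d}$, which is a small fraction of $A W^d$ once $d \ge 2c+2$.

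\textbf{Main obstacle.} This accounting does not yet close the induction. The top level alone already uses $A W^d$ colours, and the lower levels add a small but positive amount on top of that. I would first try two remedies.

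\begin{itemize}
\item \emph{Reuse colours across levels.} A vertex $v \in L_0$ with $w(v) = W$ has no neighbours of positive weight at all: any neighbour would create a clique of weight greater than $W$. More generally, each $v \in L_0$ has all its neighbours of weight at most $W - w(v)$. Using this, I would try to colour $L_0$ with palettes drawn from a fixed set of $A W^d$ colours in such a way that every lower-level vertex adjacent to $v$ can use the colours $v$ leaves free. The intended saving is roughly $A(W^d - w(v)^d) \ge A\,d\,w(v)^{d-1}(W - w(v))$, which should dominate the demand coming from neighbours of weight at most $W - w(v)$.

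\item \emph{Strengthen the inductive statement.} Alternatively, I would prove instead a bound of the form $\chi \le g(\omega)$, where $g$ is polynomial and "superadditive enough", for example $g(\omega) = A\omega^d - B\omega^{d-1}$. The lower-order slack in such a $g$ is meant to absorb the $2^{2c+1-d}$ overflow coming from the lower levels.
\end{itemize}

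Getting this constant-level bookkeeping right is the step I expect to be hardest. The overall exponent of roughly $2c+2$ should be robust; the only real difficulty is preventing the multiplicative constant from growing at every level of substitution.
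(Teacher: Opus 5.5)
The paper does not prove this statement; it quotes it from Chudnovsky, Penev, Scott and Trotignon. So your proposal has to stand on its own, and as written it does not prove the theorem, because the decisive step is left as a plan. Your reduction to weighted colourings of $H$ and your level estimates are correct. But from the hypotheses $\chi(G_v)\le A\,w(v)^d$ they only give $\chi(G)\le(1+2^{2c+1-d})A\,W^d$. Substitutions can be nested to depth about $\omega$ (e.g.\ building $K_\omega$ from copies of $K_2$), so this factor compounds and you get only an exponential bound.

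Your second remedy cannot fix this, because the overflow is of order $W^d$, not of lower order. For instance, let $H$ be the path $v\,a\,b\,b'$ with weights $W-1,1,\lfloor W/2\rfloor,\lfloor W/2\rfloor$. The maximum clique weight is $W$, and for $W\ge 3$ we have $v\in L_0$ and $b,b'\in L_1$. With separate palettes for $L_0$ and $L_1$, closing the induction would require $g(W)\ge g(W-1)+2g(\lfloor W/2\rfloor)$ for all $W$. Summing over $W$ gives roughly $g(W)\ge W\,g(\lfloor W/4\rfloor)$, hence $g(W)\ge W^{\Omega(\log W)}$ whatever lower-order terms $g$ carries. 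So colours must genuinely be shared between $L_0$ and the lower levels.

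Your first remedy is the right idea. It closes cleanly with $A=1$ and $d=2c+2$ once the palettes are nested, as follows.

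\begin{itemize}
\item Give each $v\in L_0$ the initial segment $\{1,\dots,w(v)^d\}$ for colouring $G_v$.
\item For $j\ge1$, reserve $D_j=2^{(j+1)c}\lfloor W/2^j\rfloor^d$ colours, split into one block of size $\lfloor W/2^j\rfloor^d$ per colour class of $H[L_j]$.
\item Stack these palettes at the top of $\{1,\dots,W^d\}$, deepest level highest. Then every colour used at level $j$ exceeds $W^d-\sum_{i\ge j}D_i$.
\end{itemize}

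Conflicts within $L_0$, within one level, or between two levels $j,j'\ge1$ are excluded exactly as in your accounting. If $u\in L_j$ is adjacent to $v\in L_0$, then $w(v)\le W-w(u)<(1-2^{-j-1})W$. So it suffices that
\[
\sum_{i\ge j}D_i\le W^d-\bigl((1-2^{-j-1})W\bigr)^d\qquad\text{for all } j\ge1 .
\]
The left side is at most $2^c\,2^{-j(c+2)}W^d/(1-2^{-(c+2)})\le\frac87\,2^{-2j}W^d$. Using $(1-x)^d\le(1+dx)^{-1}$, the right side is at least $\frac{d}{2^{j+1}+d}W^d\ge 2^{-j}W^d$, since $d\ge4$. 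This gives $\chi(G)\le\omega(G)^{2c+2}$ by induction on $|V(G)|$. It is exactly this bookkeeping that your proposal leaves undone, and without it the argument is incomplete.
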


In this paper, we study a generalization of $\chi$-boundedness to directed graphs as defined by Aboulker, Aubian, Charbit, and Lopes \cite{original}. A \textbf{directed graph} or \textbf{digraph} $D$ consists of a vertex set $V(D)$ and a set of arcs $A(D)$ that are \textit{ordered} pairs of vertices of $D$ (in particular, digraphs are simple in this paper, so no loops, digons, or multi-edges are allowed\footnote{Many authors call this an \textit{orientation} of a digraph, but we will use the term simple, as is more usual for undirected graphs}. We may write the arc $(v,w)$ of $D$ as any of $vw, v \rightarrow_D w, $ or $v \rightarrow w$. Given an arc $vw \in A(D)$, we say that $w$ is the \textbf{head} of the arc $vw$ and $v$ is the \textbf{tail} of the arc $vw$. For this paper, we will mostly be interested in studying tournaments. A \textbf{tournament} $T$ is a digraph in which for every pair of distinct vertices $v,w \in V(T)$, exactly one of $vw$ and $wv$ are in $A(T)$; equivalently, a tournament is an orientation of a complete graph. 

One tool for studying digraphs is to turn them back into ordered, undirected graphs. An \textbf{ordered graph} is a graph $G$ equipped with a total ordering $<_G$ of $V(G)$. Definitions used for graphs can also be used for ordered graphs by simply ignoring the ordering. One particularly useful ordered graph is the backedge graph which, amongst other things, encodes all arcs of a tournament. Given a digraph $D$ and a total ordering $<_B$ on $V(D)$, the \textbf{backedge graph of $D$ with respect to $<_B$}, notated $B(D, <_B)$ (or simply $B$ when $D$ and $<_B$ are clear from context) is the ordered graph with $V(B) = V(D)$, ordering $<_B$, and edge set $E(B) = \{vw: v <_B w, w \rightarrow_D v\}$. We say that a graph $G$ is a backedge graph of a digraph $D$ if there is an ordering $<_G$ of $V(G)$ such that the corresponding ordered graph is isomorphic to some backedge graph of $D$. Backedge graphs were first introduced by Berger, Choromanski, Chudnovsky, Fox, Loebl, Scott, Seymour, and Thomass\'{e}~\cite{BERGER20131} to study the colouring of tournaments excluding (induced) subtournaments. 

More specifically, in this paper we will focus on a notion of $\vec{\chi}$-boundedness in tournaments as defined in \cite{original}. We need to give the notions of chromatic number and clique number as they apply to tournaments. In the case of a digraph $D$, in 1982 Neumann-Lara \cite{neumann} defined the \textbf{dichromatic number} of $D$, notated $\vec{\chi}(D)$, as the minimum number $k$ such that there exists $f: V(D) \rightarrow [k]$ such that each $f^{-1}(i)$ for $i \in [k]$ induces an acyclic subdigraph; as in the case of undirected graphs, we may refer to each $f^{-1}(i)$ as a \textbf{colour class} (of $D$ with respect to $f$). We will frequently use the following elementary but fundamental result which further motivates backedge graphs:

\begin{lemma}[Folklore; see \cite{original}]\label{lem:minchromD}

Let $D$ be a digraph. Then the dichromatic number of $D$ is equal to the smallest chromatic number attained by a backedge graph of $D$. That is, 
$$\vec{\chi}(D) = \min_B \chi(B),$$ 
where the minimum ranges over all backedge graphs $B$ of $D$.
    
\end{lemma}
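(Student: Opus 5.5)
We prove the two inequalities $\vec{\chi}(D) \le \min_B \chi(B)$ and $\min_B \chi(B) \le \vec{\chi}(D)$ separately. The only fact about backedge graphs we need is this: for any ordering $<_B$, every arc $vw$ of $D$ with $v >_B w$ is an edge of $B(D,<_B)$. Indeed such an arc goes ``backwards'' with respect to $<_B$, and these are exactly the pairs put into $E(B)$.

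For $\vec{\chi}(D) \le \chi(B)$, fix any ordering $<_B$ and a proper colouring $f$ of $B = B(D,<_B)$ with $\chi(B)$ colours. Each colour class $X = f^{-1}(i)$ is stable in $B$. Hence no arc of $D[X]$ goes backwards, and every arc $vw$ of $D[X]$ satisfies $v <_B w$. A directed cycle $v_1 \to v_2 \to \dots \to v_k \to v_1$ in $D[X]$ would then give $v_1 <_B v_2 <_B \dots <_B v_k <_B v_1$, which is impossible. So each colour class is acyclic, $f$ is a dicolouring of $D$, and $\vec{\chi}(D) \le \chi(B)$. Taking the minimum over all $B$ gives one direction.

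For the reverse inequality, let $k = \vec{\chi}(D)$ and fix a dicolouring $g$ with acyclic colour classes $X_1, \dots, X_k$. Each $D[X_i]$ is acyclic, so it has a topological ordering in which every arc of $D[X_i]$ goes forward. Define $<_B$ by placing all of $X_1$ first (in its topological order), then all of $X_2$, and so on. Two vertices in the same $X_i$ are then never adjacent in $B(D,<_B)$: the arc between them, if there is one, goes forward, so the pair is not a backedge. Thus $g$ is a proper $k$-colouring of $B(D,<_B)$, and $\min_B \chi(B) \le k$.

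There is no real obstacle here. The only points needing care are the existence of a topological ordering of an acyclic digraph, which is standard, and the observation that edges of $B$ between different classes $X_i$ are irrelevant to whether $g$ is proper. Finally, since the minimum is taken over backedge graphs up to ordered isomorphism, the ordered graph we constructed is itself a backedge graph of $D$, so it is included in the range of the minimum.
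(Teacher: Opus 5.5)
Your proof is correct. It is the standard folklore argument: a colour class that is stable in $B(D,<_B)$ has all its arcs pointing forward and so is acyclic, and concatenating topological orders of the acyclic colour classes of an optimal dicolouring gives an ordering in which every class is stable. The paper does not reprove this lemma; it simply cites it from \cite{original}, and your two-direction argument is the usual proof of it.
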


In 2023, motivated by Lemma \ref{lem:minchromD}, Aboulker, Aubian, Charbit, and Lopes \cite{original} introduced a notion of \textbf{clique number for a tournament $T$}, notated $\vec{\omega}(T)$, as $\min_B \omega(B)$, where the minimum ranges over all backedge graphs $B$ of $T$\footnote{One can define the diclique number of a general digraph analogously, but we will not need this for our purposes in this paper.}. Note that it follows from the definition and Lemma \ref{lem:minchromD} that for every tournament $T$, we have $\vomega(T) \leq \vchi(T)$. With this in hand, we can now generalize $\chi$-boundedness to digraphs. Given a hereditary class of digraphs $\mathcal{D}$, we say that the class is \textbf{$\vchi$-bounded} if there exists a function $f:\mathbb{Z}^{+} \to \mathbb{Z}^{+}$ such that for each $D \in \mathcal{D}$ we have $\vchi(D) \leq f(\vomega(D))$. As before, if $f$ can be taken to be a polynomial, then we say that $\mathcal{D}$ is \textbf{polynomially $\vchi$-bounded}.

Substitution generalizes to digraphs in the natural way, and the substitution closure of a class of digraphs is defined analogously to the graph case (we will not need a formal definition for this paper). One of the main results of Aboulker, Aubian, Charbit, and Lopes \cite{original} is that  substitution preserves $\vec{\chi}$-boundedness in the setting of tournaments, a result analogous to Theorem \ref{thm:subst} of \cite{substitution} on undirected graphs.\begin{theorem}[Aboulker, Aubian, Charbit, and Lopes, \cite{original}]\label{thm:nonpolysubst}

Let $\mathcal{C}$ be a $\vec{\chi}$-bounded class of tournaments. Then $\mathcal{C}^{subst}$ is $\vec{\chi}$-bounded.
    
\end{theorem}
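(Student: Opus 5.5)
The proof goes by induction on $\omega:=\vomega(T)$, and I aim for $g(\omega)=f(\omega)\,g(\omega-1)$, where $f$ is the $\vchi$-bounding function of $\mathcal{C}$, taken nondecreasing. This would also explain why only non-polynomial boundedness is claimed. Every $T\in\mathcal{C}^{subst}$ with $|T|\ge 2$ can be written as $T=H[T_v : v\in V(H)]$ with $H\in\mathcal{C}$, $|H|\ge 2$, and each $T_v\in\mathcal{C}^{subst}$ smaller than $T$. Iterating this gives a decomposition tree whose leaves are single vertices. Two easy facts come first.

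\textbf{Restriction.} $\vomega$ does not increase on subtournaments, since we can restrict an optimal ordering. So $\vomega(H)\le\omega$ (pick one vertex per module) and $\vomega(T_v)\le\omega$.

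\textbf{Projection.} A directed cycle of $T$ that meets at least two modules projects, after contracting consecutive vertices in the same module, to a closed directed walk in $H$ of length at least $3$. A walk of length $2$ is impossible in a tournament, and any closed walk of length at least $3$ contains a directed cycle of $H$.

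The key structural step, which I expect to be the main obstacle, is a \emph{weighted interval lemma}. I would show that some optimal backedge ordering of $T$ makes every module an interval, ordered internally by an optimal ordering of $T_v$. With such an ordering, $\omega(B)$ equals the maximum over cliques $K$ of the backedge graph of $H$ of $\sum_{v\in K}\vomega(T_v)$. I would try to prove this by taking an optimal ordering and repeatedly ``gathering'' a module at the position of one of its members, chosen to minimise the largest clique crossing it, in the spirit of the corresponding step in \cite{substitution}. The consequence I need is this: if $S=\{v:\vomega(T_v)=\omega\}$, then no backedge joins two vertices of $S$ in this ordering, because such an edge would give a clique of weight $2\omega>\omega$. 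Hence $H[S]$ is transitive, and more generally every clique of $H$'s backedge graph contains at most one vertex of $S$.

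With this, I would define a colouring with $f(\omega)\,g(\omega-1)$ colours. For each vertex $x$, descend the decomposition tree from the root. At each level $x$ lies in some module $T_v$ of the current $H$, and we stop at the first level where $\vomega(T_v)\le\omega-1$. This level exists because leaves have $\vomega=1$; the case $\omega=1$ is handled directly, since then $T$ is transitive. At the stopping level, colour $x$ by the pair $(c_H(v),c_v(x))$. Here $c_H$ is an optimal dicolouring of that level's $H$, with at most $f(\vomega(H))\le f(\omega)$ colours, and $c_v$ is a colouring of $T_v$ given by induction, with at most $g(\omega-1)$ colours.

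It remains to check that each colour class is acyclic. Take a monochromatic directed cycle $C$ and the deepest node of the tree whose tournament $T'=H'[\dots]$ contains $C$, so that $C$ meets at least two modules of $H'$. If all vertices of $C$ stopped at this level, the projection fact gives a monochromatic cycle of $c_{H'}$, which is a contradiction. Otherwise some vertices continued downward, so they lie in modules indexed by $S'$, where $S'$ is the analogue of $S$ for $T'$ (the vertices $v$ of $H'$ with $\vomega(T'_v)=\omega$). Here I would use that $H'[S']$ is transitive together with the clique property from the interval lemma to show that the projected walk still yields a cycle in $H'$ avoiding the structure that $C$ would need. I expect this last case analysis, together with the interval lemma it relies on, to be the delicate part. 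If it resists, my fallback is to assign the vertices that pass through $S'$-modules a separate palette per depth parity, at the cost of a constant factor in $g$.
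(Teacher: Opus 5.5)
Your proposal has a genuine gap, and it lies in the two steps you flagged as delicate.

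First, the weighted interval lemma is false. Let $U=\{u_1,u_2,u_3\}$ with $u_1\to u_3\to u_2\to u_1$, and $X=\{x_1,x_2,x_3\}$ with $x_1\to x_3\to x_2\to x_1$. Add a vertex $w$, and direct all arcs from $U$ to $X$, from $X$ to $w$, and from $w$ to $U$. This $T$ is a directed triangle $H$ with directed triangles substituted at two of its vertices, so it lies in $\mathcal{C}^{subst}$ already for $\mathcal{C}$ the tournaments on at most three vertices. In the ordering $u_1,u_3,x_1,x_3,w,u_2,x_2$ the backedges are the $5$-cycle $u_1u_2$, $u_2x_1$, $x_1x_2$, $x_2w$, $wu_1$ together with $x_3u_2$ and $u_3w$. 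This graph is triangle-free, so $\vec{\omega}(T)=2$. Now take any ordering in which $U$ and $X$ are intervals. The induced ordering of $H$ has a backedge between two modules, at least one of which is $U$ or $X$. All pairs between these two modules are then backedges, and an internal backedge of that directed triangle, together with any vertex of the other module, forms a triangle. So no optimal ordering makes the modules intervals, and your weighted formula for $\omega(B)$ fails: it gives $3$ here. The ``clique property'' you want to invoke in the final case therefore has no basis. The weaker consequence that $H[S]$ is transitive does hold, but by a direct argument. In an optimal ordering each full module $M$ contains a backedge clique $K_M$ of size $\omega$. Hence every in-neighbour of $M$ precedes $\max K_M$ and every out-neighbour follows $\min K_M$. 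A directed triangle $A\to B\to C\to A$ of full modules would then give $\min K_A<\min K_B<\min K_C<\min K_A$.

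Second, and more fundamentally, your colouring is not proper, and transitivity of $H'[S']$ cannot rescue it. For graphs the scheme works because a full module must be isolated in $H$, since any neighbour would enlarge the clique; so vertices stopping at different levels are never adjacent. In tournaments a full module can lie on a directed cycle of $H$ with non-full modules, as $U\to X\to w\to U$ above. There, with $\omega=2$, $w$ stops at the top level with colour $(c_H(w),1)$. The vertices of $U$ and $X$ stop one level down with colours $(c_U(u),1)$ and $(c_X(x),1)$, where $c_U,c_X$ are dicolourings of a directed triangle drawn from the same palette. Nothing prevents $c_H(w)=c_U(u_1)=c_X(x_1)$, and then $u_1\to x_1\to w\to u_1$ is monochromatic. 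Your projection argument only handles cycles all of whose vertices take their first coordinate from the same $c_{H'}$. A vertex deep inside a full module instead carries a colour unrelated to the $c_{H'}$-colour of its module.

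The parity fallback fails in the same way. Substitute the tournament above for the vertex $D$ of a directed triangle $D\to y\to y'\to D$. The ordering $u_1,u_3,x_1,x_3,y',w,y,u_2,x_2$ is triangle-free, so this still has $\vec{\omega}=2$. Then $u_1\to y\to y'\to u_1$ is a cycle through a vertex stopping at depth $3$ and two vertices stopping at depth $1$. Since $\{y,y'\}$ is acyclic, $c_H$ may give $y,y'$ the same colour, and $c_U(u_1)$ may equal it.

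What is missing is an ingredient that controls cycles through both full and non-full modules, while keeping the number of palettes independent of the depth of the decomposition tree. Without it, the recursion $g(\omega)=f(\omega)g(\omega-1)$ is not established.
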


However, in the case of undirected graphs, note that substitution also preserves the stronger condition of polynomial $\chi$-boundedness. This stronger notion has also been well-studied (see for example \cite{lindapaper,davies,GroundedLgraphs, nolongholevtxminor,Nesetril2020ClusteringPO}). In fact, it was not known for certain that these notions for undirected graphs were even distinct until a 2024 breakthrough by Bria\'nski, Davies, and Walczak \cite{separating} showed the existence of $\chi$-bounded classes of undirected graphs that are not polynomially $\chi$-bounded. It is thus natural to ask whether this stronger condition may also be true in tournaments; Aboulker et al.\  \cite{original} leave it as an open question in their work.

Aboulker, Duron, Jacob, Kimbrough, Thomass\'{e}, and this work's authors \cite{bigNote} showed recently that if a class of tournaments has bounded chromatic number, then the closure of this class under substitution is polynomially $\vec{\chi}$-bounded. 

\begin{theorem}[\cite{bigNote}]\label{thm:boundedchi}

   Let $\mathcal{C}$ be a class of tournaments such that each tournament in $\mathcal{C}$ has chromatic number at most $k$. Then for each $T \in \mathcal{C}^{subst}$, we have  $\vec{\chi}(T) \leq (\omega(T))^{2\lceil \log_2(k) \rceil +1}$. In particular, the class $\mathcal{C}^{subst}$ is polynomially $\vec{\chi}$-bounded. 
\end{theorem}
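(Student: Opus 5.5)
The plan is to prove a weighted statement about a single ``top-level'' tournament $H \in \mathcal{C}$ and then induct along the substitution structure. Throughout I read ``chromatic number'' as dichromatic number and $\omega(T)$ as $\vomega(T)$. For a tournament $H$ with a weight function $w: V(H) \to \mathbb{Z}^+$, define:
\begin{itemize}
\item $\omega_w(H)$ as the minimum, over orderings $<_B$, of the maximum total weight of a clique in $B(H,<_B)$;
\item $\chi_w(H)$ as the least $N$ such that each vertex $v$ can be assigned a set of $w(v)$ colours from $[N]$ so that, for every colour, the vertices receiving it induce an acyclic subtournament.
\end{itemize}
The core claim is that if $\vchi(H) \le k$ then $\chi_w(H) \le \omega_w(H)^{2\lceil \log_2 k\rceil + 1}$.

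To pass from the core claim to the theorem, write $T \in \mathcal{C}^{subst}$ as $H[T_v : v \in V(H)]$ with $H \in \mathcal{C}$ and each $T_v$ in $\mathcal{C}^{subst}$ of smaller size. By induction, $\vchi(T_v) \le \vomega(T_v)^c$ with $c = 2\lceil\log_2 k\rceil+1$. Put $w(v) = \vchi(T_v)$. A weighted colouring of $H$ then gives a dicolouring of $T$: for each colour $i$, take the union over $v$ of one colour class of $T_v$, using a distinct class of $T_v$ for each colour $v$ received. A directed cycle in such a union either stays inside one $T_v$ or projects to a closed walk in $H$ on vertices sharing colour $i$, which contains a cycle; both are impossible. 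So $\vchi(T) \le \chi_w(H)$.

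For the clique side, take an optimal ordering of $T$. Since $T_v$ is a module, I would first modify this ordering so that each $T_v$ occupies an interval, without increasing the clique number; this is the standard step and needs a short argument. The ordering then induces an ordering of $H$ in which a clique $K$ of the backedge graph lifts to a clique of size $\sum_{v\in K}\vomega(T_v)$. Hence the core claim applied with the weights $\vomega(T_v)$ gives $\omega_{\vomega}(H) \le \vomega(T)$. To swap weights from $\vomega(T_v)^c$ down to $\vomega(T_v)$, I would use $\sum a_v^c \le (\sum a_v)^c$, which lets me run the core claim with weights $\vomega(T_v)$ and raise to the power $c$ at the end.

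The core claim I would prove by induction on $\lceil\log_2 k\rceil$, splitting the $k$ acyclic classes of $H$ into two groups $X$ and $Y$, each of dichromatic number at most $\lceil k/2\rceil$.
\begin{itemize}
\item \textbf{Base case $k = 1$.} $H$ is transitive, the transitive ordering has an empty backedge graph, and $\chi_w(H) = \max_v w(v) \le \omega_w(H)$.
\item \textbf{Inductive step.} Fix an ordering of $H$ attaining $\omega_w(H) = \omega$, and handle the arcs between $X$ and $Y$, in the style of Davies--McCarty-type colouring. Partition $X$ into levels according to the maximum weight of a backedge clique ending at each vertex, in steps of size $1$ up to $\omega$. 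This gives at most $\omega$ parts, each having controlled interaction with $Y$.
\item \textbf{Level structure.} Within a level, I want to show that $X$-vertices and $Y$-vertices can be coloured with a common palette $\omega$ times larger than the recursive bound for one side. This yields a factor of $\omega^2$ per halving step, which matches the exponent $2\lceil\log_2 k\rceil+1$.
\end{itemize}

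The main obstacle is this inductive step: showing that the union of same-coloured parts from $X$ and $Y$ is acyclic. A directed cycle there must alternate between $X$ and $Y$ using backedges, and the levelling must be chosen so that any such cycle forces a heavy clique in the backedge graph. My first attempt would be to take a shortest such cycle and extract from it a chain of backedges whose weights sum past $\omega$. If that fails, I would instead use a product colouring indexed by the pair (level in $X$, level in $Y$).
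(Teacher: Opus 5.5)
\textbf{Verdict: the proposal has a genuine gap.} Your reduction from $T$ to the weighted quotient $H$ relies on a false step, and your core claim is not actually proved.

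\textbf{The module step is false.} You claim that an optimal ordering of $T$ can be modified so that each $T_v$ occupies an interval without increasing the clique number, which would give $\omega_a(H)\le\vec{\omega}(T)$ for $a_v=\vec{\omega}(T_v)$. This fails already with one nontrivial module. Let $M=\{m_1,m_2,m_3\}$, $X=\{x_1,x_2,x_3\}$ and $Y=\{y_1,y_2,y_3\}$ each induce a directed triangle, with $m_1\to m_2\to m_3\to m_1$, $x_1\to x_2\to x_3\to x_1$ and $y_1\to y_2\to y_3\to y_1$. Add the arcs $X\Rightarrow M\Rightarrow Y$, $y_1\Rightarrow X$ and $X\Rightarrow\{y_2,y_3\}$.
\begin{itemize}
\item The ordering $x_1,x_2,m_1,y_1,x_3,m_2,m_3,y_2,y_3$ has exactly the backedges $m_1m_3$, $m_1x_3$, $m_2y_1$, $m_3y_1$, $x_1x_3$, $y_1y_3$, $x_1y_1$, $x_2y_1$. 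These form a triangle-free graph, so $\vec{\omega}(T)=2$.
\item Now contract $M$ to a vertex $v$ of weight $2$. Any ordering of $H$ either gives $v$ a backedge neighbour, for weight $3$, or places $X$ before $v$ before $Y$. In the latter case $y_1$ is backedge-adjacent to all of $X$, and together with the backedge forced inside the triangle $X$ it forms a triangle. So $\omega_a(H)\ge 3$.
\end{itemize}
Hence no optimal ordering of $T$ makes $M$ an interval, and the inequality you need does not hold.

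\textbf{The exponent does not close.} Even if the module step held, your bookkeeping would not close the induction. The core claim with weights $a_v^c$, combined with $\sum a_v^c\le(\sum a_v)^c$, only yields $\vec{\chi}(T)\le\omega_a(H)^{c^2}$. The exponent therefore grows with the substitution depth. You would need the stronger statement $\chi_{a^c}(H)\le\omega_a(H)^c$.

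\textbf{The inductive step of the core claim is missing.} You leave it as an open obstacle. The missing idea is that you never need to analyse directed cycles: it suffices that every colour class be independent in one fixed backedge graph $B$.
\begin{itemize}
\item The arcs from $X$ to $Y$ form a comparability digraph (a height-two poset), and so do the arcs from $Y$ to $X$.
\item The backedge graph of a comparability digraph under any ordering $<$ is the comparability graph of the intersection of $<$ with the reversed poset. It is therefore perfect and $\omega(B)$-colourable.
\item Recursing inside $X$ and $Y$ (a disjoint union of comparability digraphs is again one) shows that a tournament with $\vec{\chi}\le k$ is a union of $c=2\lceil\log_2 k\rceil+1$ comparability digraphs.
\item A product of the $c$ resulting colourings uses at most $\omega(B)^c$ colours. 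This is where your factor of $\omega^2$ per halving comes from.
\end{itemize}

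\textbf{How the paper's route avoids both problems.} The paper points to exactly this decomposition into comparability digraphs. It also handles substitution with no quotient at all: a lexicographic sum of posets is a poset, so every $T\in\mathcal{C}^{subst}$ is itself a union of $c$ comparability digraphs. One then applies the product colouring directly to a backedge graph of $T$ that realises $\vec{\omega}(T)$. No weights are needed, and no module ever has to occupy an interval.
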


Graphs and digraphs arising from posets are used in proving Theorem \ref{thm:boundedchi}. A \textbf{poset} $P$ consists of a set $V(P)$ equipped with a partial order $<_P$, meaning that $<_P$ is reflexive, antisymmetric, and transitive. A digraph $D$ is a \textbf{comparability digraph} if there exists a poset $P$ with $V(P) = V(G)$ such that for distinct $v,w \in V(D)$, we have $vw \in A(D)$ if and only if $v <_P  w$. Equivalently, a comparability digraph is one where whenever $a,b,c \in V(D)$ are such that $a \rightarrow_D b$ and $b \rightarrow_D c$, then $a \rightarrow_D c$. A graph $G$ is a \textbf{comparability graph} if $G $ is the underlying undirected graph for a comparability digraph $D$; equivalently, there exists a poset $P$ with $V(P) = V(G)$ such that for any distinct $v,w \in V(G)$, we have $vw \in E(G)$ if and only if $v$ and $w$ are comparable in $P$ (meaning either $v <_P w$ or $w <_P v$). 

Theorem \ref{thm:boundedchi} relies in part on an intermediate result that every tournament in a class with bounded chromatic number decomposes into a bounded number of comparability digraphs. A digraph $D$ is said to \textbf{decompose into} (or \textbf{has a decomposition into}) digraphs $D_1,\dots,D_k$ if $V(D) = V(D_1) = \dots = V(D_k)$ and $A(D_1) \cup \dots \cup A(D_k) = A(D)$. Analogously, a graph $G$ is said to \textbf{decompose into} graphs $G_1, \dots, G_k$ if $V(G) = V(G_1) = \dots = V(G_k)$ and $E(G_1) \cup \dots \cup E(G_k) = E(G)$. We say that such a decomposition is \textbf{disjoint} if the edge sets (respectively, arc sets) are pairwise disjoint. 

This decomposition is combined with the easily verifiable observation (Lemma 6 of \cite{bigNote}) that every backedge graph of a comparability digraph is a comparability graph, as well as the following well-known result of Berge \cite{berge}: 
\begin{lemma}[Berge \cite{berge}]\label{lem:compperf}
Comparability graphs are perfect.
\end{lemma}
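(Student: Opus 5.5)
The statement to prove is Lemma~\ref{lem:compperf}: every comparability graph is perfect. I will use the Perfect Graph Theorem's local form directly rather than Theorem~\ref{thm:strongperfect}. First, note that the class of comparability graphs is hereditary: if $G$ is the comparability graph of a poset $P$ and $X \subseteq V(G)$, then $G[X]$ is the comparability graph of the restriction of $<_P$ to $X$, which is again a partial order. Hence it suffices to show that $\chi(G) = \omega(G)$ for every comparability graph $G$. Since $\omega(G) \leq \chi(G)$ always holds, only the bound $\chi(G) \leq \omega(G)$ needs proof.

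Fix a poset $P$ whose comparability graph is $G$. Cliques of $G$ are exactly the chains of $P$, so $\omega(G)$ is the maximum size of a chain. For each $v \in V(G)$, define $h(v)$ as the number of elements in a longest chain of $P$ whose maximum element is $v$. Then $1 \le h(v) \le \omega(G)$, so $h$ maps $V(G)$ into $[\omega(G)]$.

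It remains to check that $h$ is a proper colouring. Suppose $uv \in E(G)$; without loss of generality $u <_P v$. Take a longest chain ending at $u$. By transitivity every element of this chain is below $v$, so appending $v$ gives a chain ending at $v$ of length $h(u)+1$. Hence $h(v) \ge h(u)+1 > h(u)$, and adjacent vertices receive different colours. Therefore $\chi(G) \le \omega(G)$, and since this applies to every induced subgraph by heredity, $G$ is perfect.

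There is no real obstacle here. The only point needing care is the use of transitivity, which ensures that the extended sequence is still a chain, that is, still a clique of $G$. This is exactly the essence of Mirsky's theorem, the dual of Dilworth's theorem.
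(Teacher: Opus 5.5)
Your proof is correct and complete. Three ingredients settle it:
\begin{itemize}
\item the class of comparability graphs is hereditary;
\item cliques are exactly chains;
\item the height function $h$ is a proper colouring with $\omega(G)$ colours, because $u <_P v$ forces $h(v) \ge h(u)+1$ by transitivity.
\end{itemize}
Together these give $\chi = \omega$ on every induced subgraph.

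The paper does not prove this lemma at all; it only cites Berge, so there is no route in the paper to compare against. Your argument is the standard one, namely Mirsky's theorem that the height classes are antichains. It also produces an explicit $\omega$-colouring, which is exactly what the paper later needs in Lemma~\ref{lem:G1G2} and Lemma~\ref{lem:compgraph}.

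One small correction to your framing: you do not use any ``local form of the Perfect Graph Theorem''. You verify the definition of perfection directly, which is all that is needed, so that phrase should be dropped.
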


This leads to the natural question: Could it be the case that decomposing into a bounded number of comparability digraphs is necessary for polynomial $\vec{\chi}$-boundedness in tournaments? We show that the answer to this is no, using as a counterexample a class of tournaments defined by having backedge graphs that arise from interval systems.

An \textbf{interval system} $\mathcal{I}$ is a set of open intervals in $(0,1)$ such that no two elements of $\mathcal{I}$ share an endpoint. Given an interval $I \in \mathcal{I}$, its left endpoint is notated $L(I)$, and its right endpoint is notated $R(I)$. The interval system $\mathcal{I}$ is naturally equipped with two (not necessarily distinct) total orders: $<_L$, the ordering by left endpoint, and $<_R$, the ordering by right endpoint. There is also a natural partial order $<_\intI$ given by $I <_\intI J$ if and only if $R(I) < L(J)$ (that is, the intervals are disjoint and $I$ is ``before'' $J$ in $(0, 1)$).

The \textbf{interval graph} $G_{\mathcal{I}}$ of an interval system $\mathcal{I}$ is an ordered graph with $V(G_{\mathcal{I}}) = \mathcal{I}$, ordered according to $<_L$ (that is, by left endpoint), and $E(G_{\intI})$ is the set of all pairs of intervals $I, J$ with $L(I) < L(J)$ such that $R(I) > L(J)$; that is, two intervals are adjacent in $G_{\mathcal{I}}$ if and only if they have a non-empty intersection (as intervals).
\begin{figure*}
    \centering
        \begin{subfigure}{0.29\textwidth}
            \centering
            \includegraphics[width=\linewidth]{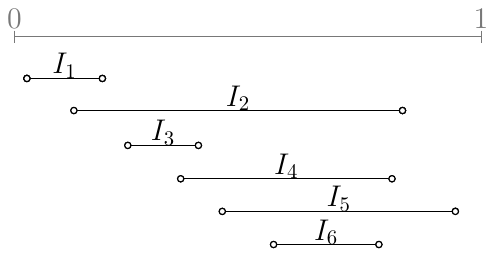}
        \end{subfigure}
        \hfill
        \begin{subfigure}{0.29\textwidth}
            \centering
            \includegraphics[width=\linewidth]{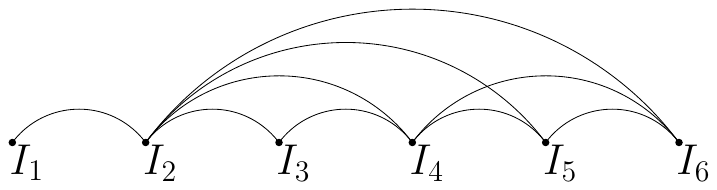}
        \end{subfigure}
        \hfill
        \begin{subfigure}{0.29\textwidth}
            \centering
            \includegraphics[width=\linewidth]{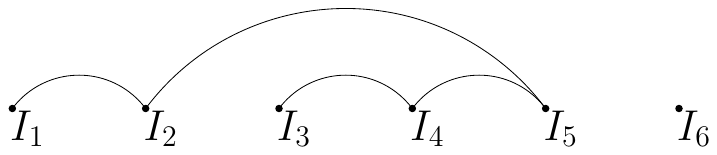}
        \end{subfigure}
    \caption{An interval system $\mathcal{I}$, labelled according to $<_L$ (left); its corresponding ordered interval graph $G_{\mathcal{I}}$ (middle); and its corresponding ordered overlap graph (right). }
    \label{fig:placeholder}
\end{figure*}

Gy{\'a}rf{\'a}s, Marits, and T{\'o}th showed in 2024 that interval graphs cannot be decomposed into a bounded number of comparability graphs.

\begin{theorem}[Gy{\'a}rf{\'a}s, Marits, and T{\'o}th \cite{perfectIntoComparability}, Theorem 8 restatement]\label{thm:gyarfas}
For each $k$, there exists an interval graph that cannot be decomposed into fewer than $k$ comparability graphs.
\end{theorem}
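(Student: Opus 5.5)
The plan is a Ramsey argument on a \emph{shift-graph}-like interval graph. Fix $k$, and suppose for contradiction that every interval graph decomposes into at most $k-1$ comparability graphs. Let $c=2(k-1)$. Choose $n$ at least the hypergraph Ramsey number $R^{(3)}(4;c)$. This guarantees that every $c$-colouring of the $3$-subsets of $[n]$ has a $4$-set all of whose triples receive the same colour.

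\textbf{Construction.} For each pair $1\le i<j\le n$ take the interval
$$I_{ij}=(i+\varepsilon_{ij},\, j+\tfrac12+\varepsilon'_{ij}),$$
where the $\varepsilon$'s are distinct and tiny, so that no two intervals share an endpoint. Rescale everything into $(0,1)$, and let $G$ be the resulting interval graph. Two facts drive the argument:
\begin{itemize}
\item For $a<b<c$, the intervals $I_{ab}$ and $I_{bc}$ intersect, since $I_{ab}$ contains points just above $b$ and $I_{bc}$ starts just after $b$.
\item For $a<b<c<d$, the intervals $I_{ab}$ and $I_{cd}$ are disjoint, since $b+\tfrac12+\varepsilon<c+\varepsilon$. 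So $I_{ab}$ and $I_{cd}$ are nonadjacent in $G$.
\end{itemize}

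\textbf{Colouring the triples.} Suppose $G$ decomposes into comparability graphs $G_1,\dots,G_{k-1}$. Fix a transitive orientation of each $G_t$; this is the orientation coming from a poset, where $x\to y$ and $y\to z$ imply $x\to z$ within $G_t$. For each triple $a<b<c$, the edge $I_{ab}I_{bc}$ lies in some $G_t$; choose one such $t$. Colour the triple by the pair $(t,\sigma)$, where $\sigma$ records whether $G_t$ orients the edge $I_{ab}\to I_{bc}$ or $I_{bc}\to I_{ab}$. This uses at most $c$ colours.

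\textbf{Deriving the contradiction.} By the choice of $n$, there are $a<b<c<d$ such that the triples $\{a,b,c\}$ and $\{b,c,d\}$ receive the same colour $(t,\sigma)$. Then the edges $I_{ab}I_{bc}$ and $I_{bc}I_{cd}$ both lie in $G_t$ with consistent orientation. This gives a directed path of length two in $G_t$: either
$$I_{ab}\to I_{bc}\to I_{cd}\quad\text{or}\quad I_{cd}\to I_{bc}\to I_{ab}.$$
Transitivity then forces $I_{ab}I_{cd}\in E(G_t)\subseteq E(G)$. But $I_{ab}$ and $I_{cd}$ are disjoint, so they are nonadjacent in $G$, a contradiction. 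Hence this $G$ needs at least $k$ comparability graphs.

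\textbf{Main obstacle.} The main point is choosing the family so that consecutive pairs overlap while pairs two steps apart are disjoint, together with encoding orientation into the colouring. Once that is in place, the $3$-uniform Ramsey theorem does the rest. The remaining check is that the perturbations keep all endpoints distinct without changing any intersection pattern; this is routine because all the relevant gaps have size at least $\tfrac12$.
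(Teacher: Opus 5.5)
Your proof is correct. The paper does not prove this statement: it quotes it from \cite{perfectIntoComparability} purely as motivation, and likewise cites its tournament analogue, Theorem~\ref{thm:notkcomp}, from \cite{bigNote}. So there is no in-paper argument to compare with, and yours stands as a complete, self-contained proof.

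The steps that need checking all go through:
\begin{enumerate}
\item Every left endpoint is within $\varepsilon$ of an integer and every right endpoint within $\varepsilon$ of a half-integer, so the perturbation changes no intersection.
\item For $a<b<c<d$ you have $I_{ab}\cap I_{bc}\neq\emptyset$ and $I_{ab}\cap I_{cd}=\emptyset$.
\item You correctly use that a decomposition satisfies $E(G_t)\subseteq E(G)$. This is exactly what makes the edge $I_{ab}I_{cd}$ forced by transitivity illegal.
\end{enumerate}
Your argument also handles non-disjoint decompositions, and an arbitrary choice of $t$ when an edge lies in several $G_t$.

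Two small remarks.

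First, the $3$-uniform Ramsey theorem is far more than you use. You only need two triples $\{a,b,c\}$ and $\{b,c,d\}$ of the same colour, i.e.\ that the shift graph on triples is not properly $2(k-1)$-colourable. This follows elementarily. Suppose the triples are coloured with $m$ colours and no such pair exists. Send each pair $(a,b)$ to the set of colours of triples $\{a,b,x\}$ with $x>b$. This is a proper colouring of the shift graph on pairs with at most $2^m$ colours, which forces $\log_2 n\le 2^m$. Hence $n>2^{2^{2(k-1)}}$ already suffices. Since $\omega(G)$ is about $n^2/4$, your interval graph in fact needs $\Omega(\log\log\omega(G))$ comparability graphs, rather than the inverse-tower bound that Ramsey's theorem gives.

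Second, you use $c$ both for the number of colours and for an element of the $4$-set; rename one of them.
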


A tournament $T$ is a \textbf{crossing tournament} if it admits a backedge graph which is an interval graph (with the ordering $<_L$). The class of crossing tournaments was first considered by Nguyen, Scott, and Seymour in 2025 as an example of a class of tournaments with unbounded clique number \cite{crossing}.

In analogy to the above, in \cite{bigNote} it is shown that crossing tournaments do not generally decompose into a bounded number of comparability digraphs, using ideas from the proof of Theorem \ref{thm:gyarfas} of \cite{perfectIntoComparability}.

\begin{theorem}[\cite{bigNote}]\label{thm:notkcomp}

For every fixed positive integer $k$, there is a crossing tournament $T$ in which cannot be decomposed into fewer than $k$ comparability digraphs. 
    
\end{theorem}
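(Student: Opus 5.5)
The plan is to reduce the statement to Theorem~\ref{thm:gyarfas} by passing from a crossing tournament to its interval backedge graph. The tool is the observation (Lemma 6 of \cite{bigNote}) that backedge graphs of comparability digraphs are comparability graphs. A decomposition of the tournament into comparability digraphs will then yield a decomposition of the interval graph into the same number of comparability graphs.

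\textbf{Construction.} Fix $k$ and let $G$ be the interval graph given by Theorem~\ref{thm:gyarfas}, which cannot be decomposed into fewer than $k$ comparability graphs. Realize $G$ by an interval system $\mathcal{I}$ of open intervals in $(0,1)$ with pairwise distinct endpoints; this is possible after a small perturbation, which preserves all nonempty intersections since the intervals are open and finite in number. Then $G \cong G_{\mathcal{I}}$ as graphs. Define a tournament $T$ on $\mathcal{I}$ as follows: for $I <_L J$, put $J \rightarrow_T I$ if $IJ \in E(G_{\mathcal{I}})$, and $I \rightarrow_T J$ otherwise. By construction $B(T,<_L) = G_{\mathcal{I}}$, so $T$ is a crossing tournament.

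\textbf{Transfer of a decomposition.} Suppose $T$ decomposes into comparability digraphs $D_1,\dots,D_m$. For each $i$, let $B_i$ be the backedge graph of $D_i$ with respect to $<_L$, that is, the graph on $\mathcal{I}$ with edges $IJ$ such that $I<_L J$ and $J\rightarrow_{D_i} I$. We check that each $B_i$ is a comparability graph. Orient each edge of $B_i$ as in $D_i$, so that every arc points backward in $<_L$. If $J\rightarrow I$ and $I \rightarrow H$ are such arcs, then $H<_L I<_L J$. Transitivity of $D_i$ gives $J\rightarrow_{D_i} H$, and this arc is also backward, so it is an edge of $B_i$. Hence the orientation of $B_i$ is transitive, which is exactly Lemma 6 of \cite{bigNote}.

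Since $A(T)=\bigcup_i A(D_i)$, the backward arcs of $T$, which are exactly $E(G_{\mathcal{I}})$, are the union of the $E(B_i)$. Thus $G_{\mathcal{I}}$ decomposes into the $m$ comparability graphs $B_1,\dots,B_m$, and so $m\ge k$ by the choice of $G$.

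The only points that need care are the realization with distinct endpoints and checking that restricting a comparability digraph to its backward arcs stays transitive. I expect neither to be a real obstacle, since the substantive difficulty is already contained in Theorem~\ref{thm:gyarfas}.
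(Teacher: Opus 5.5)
Your argument is correct, but it takes a different route from the one the paper points to. The paper gives no proof of this statement. It attributes the result to \cite{bigNote} and describes that proof as reusing \emph{ideas from the proof} of Theorem~\ref{thm:gyarfas}, i.e.\ redoing the Gy\'arf\'as--Marits--T\'oth argument in the tournament setting.

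You instead use Theorem~\ref{thm:gyarfas} as a black box. You fix the order $<_L$ and restrict a decomposition $D_1,\dots,D_m$ of $T$ to its backward arcs. This yields a decomposition of $G_{\mathcal{I}}$ into the graphs $B(D_i,<_L)$, which are comparability graphs by the transitivity check you carry out (Lemma~6 of \cite{bigNote}). Your route is shorter and modular, and it shows that the tournament statement, with the same $k$, follows formally from the graph statement.

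One point to be aware of: your transfer preserves the type of decomposition. Covers go to covers, and arc-disjoint decompositions go to edge-disjoint ones. So you get the statement in the paper's covering sense of ``decompose'' precisely because Theorem~\ref{thm:gyarfas} is quoted in that same covering sense. A lower bound only for edge-partitions of interval graphs would give you only the disjoint version for tournaments.

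A minor point about the realization step: your perturbation must also preserve non-intersections, not just nonempty intersections. Enlarging each interval of a closed-interval representation by a distinct tiny amount does this. In the paper's language, though, an interval graph is already of the form $G_{\mathcal{I}}$ for an interval system, so you can skip this step.
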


We show as our main result that the class of crossing tournaments is polynomially $\vec{\chi}$-bounded, giving an explicit example of such a family that is not expressible as a union of a bounded number of comparability digraphs.

\begin{theorem}\label{thm:finaltheorem}

Let $T$ be a crossing tournament, and let $\omega(T) = \omega$. Then
$$
\vec{\chi}(T) \leq 15\omega^4+2\omega^3\log_2(\omega)+6\omega^3.
$$
    
\end{theorem}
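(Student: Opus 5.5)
The plan is to work directly with an interval system $\mathcal{I}$ whose interval graph $G_{\mathcal{I}}$ (ordered by $<_L$) is a backedge graph of $T$, so that we can read off all arcs. For intervals $I <_L J$ there are three cases.
\begin{itemize}
\item $I$ and $J$ are disjoint: then $I \rightarrow J$.
\item $J \subset I$ (nested): then $J \rightarrow I$.
\item $I$ and $J$ cross, i.e.\ $L(I)<L(J)<R(I)<R(J)$: then $J \rightarrow I$.
\end{itemize}
The disjoint arcs form the comparability digraph of the poset $<_{\mathcal{I}}$. The nested arcs also form a comparability digraph, with arcs running from the smaller interval to the larger. Only the crossing arcs are not of comparability type; they form an ordered \emph{overlap graph}, which is a circle graph. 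So $T$ decomposes into two comparability digraphs plus one ``circle'' part. Theorem \ref{thm:notkcomp} shows the circle part cannot simply be absorbed into a bounded number of further comparability digraphs.

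Given that decomposition, I will adapt the Davies--McCarty/Davies colouring of circle graphs to the crossing part, guided by the target bound $15\omega^4+2\omega^3\log_2\omega+6\omega^3$. The layering goes as follows.
\begin{enumerate}
\item Fix a root interval and sort intervals into levels by distance in the overlap graph. It suffices to colour each level with a fixed palette and alternate two palettes; this costs a factor of $2$.
\item Inside a level, use the Davies--McCarty trick: for each interval, record a ``parent'' in the previous level. Split the level into $O(\omega)$ classes so that within each class the crossing structure becomes \emph{nested-like}, i.e.\ transitive together with the nesting arcs.
\item In each such class, the tournament is a union of at most two comparability digraphs. Apply the argument behind Theorem \ref{thm:boundedchi}: backedge graphs of comparability digraphs are comparability graphs, hence perfect by Lemma \ref{lem:compperf}. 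This gives $O(\omega^2)$ colours per class, and the $\log_2\omega$ term is where a halving/doubling recursion on the $\omega^2$ part enters.
\end{enumerate}
Multiplying the factors should give an $O(\omega^4)$ total.

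The essential point, and the main obstacle, is that $\omega$ here is $\vec{\omega}(T)$, which is defined by a \emph{different} ordering $<_B$ with $\omega(B)\le\omega$. It is not the clique number of $G_{\mathcal{I}}$, nor of the overlap graph. A chain of pairwise crossing intervals induces a transitive subtournament, so cliques of the overlap graph are not bounded in terms of $\omega$. Therefore every place where the circle-graph argument uses ``clique number'' must be replaced by a structure that forces a large clique in \emph{every} backedge graph.

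My first attempt is the following.
\begin{itemize}
\item Identify small gadgets built from crossing and disjoint intervals, for instance a crossing pair together with an interval disjoint from one and nested in or crossing the other. These create directed triangles.
\item Show that $k$ ``independent'' copies of such gadgets, arranged in a chain along $(0,1)$, form a tournament of the type $\Delta(1,k,\cdot)$. Such a tournament has $\vec{\omega}\ge k$, since every ordering must reverse an arc in each triangle and those reversed arcs pairwise collide.
\item Conclude that whenever the Davies--McCarty step needs a bounded clique in the overlap/parent structure, a violation would produce more than $\omega$ gadgets and hence $\vec{\omega}(T)>\omega$.
\end{itemize}

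Proving this lemma is where I expect most of the work. It must be formulated exactly strongly enough that each level, and each nested-like class, contains no such large gadget family, so that the comparability colourings within them cost only polynomially in $\omega$.
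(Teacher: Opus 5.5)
There is a genuine gap, and it sits in the step you defer. You never fix a backedge graph that realizes $\vec{\omega}(T)$, so $\omega$ never becomes the clique number of a graph you are actually colouring. Instead you run the circle-graph machinery on the overlap structure of $\mathcal{I}$ and hope a gadget lemma will stand in for its uses of clique number. That lemma cannot hold in the form you need. As you observe yourself, intervals with $L(I_1)<\dots<L(I_k)<R(I_1)<\dots<R(I_k)$ induce a transitive subtournament. So the overlap graph, and any BFS level or class built from it, can contain $K_k$ while $T$ has no directed triangle and hence none of your gadgets. A large clique in the overlap structure does not force $\vec{\omega}(T)$ to be large.

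Step~1 fails for a separate reason. Every pair of vertices of $T$ is joined by an arc, so disjoint and nested arcs run between non-consecutive levels. Acyclic colourings of levels $0,2,4,\dots$ from a shared palette therefore need not combine into acyclic colour classes. For example, take a root $R$, an interval $A\supset R$, and an interval $B$ lying left of $R$ with $L(B)<L(A)<R(B)$. These give $R\to A\to B\to R$, and one connector crossing $R$ and $A$ plus another crossing $R$ and $B$ put $A$ and $B$ in level $2$. Step~2 is asserted without a mechanism, and the constants in the target bound are never derived.

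The missing idea is to move the whole problem into one optimal backedge graph. Take $G'$ with $\omega(G')=\vec{\omega}(T)$, so that $\vec{\chi}(T)\le\chi(G')$, and classify the \emph{edges of $G'$}, not the arcs of $T$ read off from $G_{\mathcal{I}}$, by how their ends relate as intervals.
\begin{itemize}
\item Edges between disjoint intervals are exactly the pairs with $R(I)<L(J)$ and $J<_{G'}I$.
\item Edges between nested intervals are exactly the pairs with $J\subseteq I$ and $I<_{G'}J$.
\end{itemize}
Each family is a comparability graph: its poset is $<_{G'}$ or its reverse, intersected with $<_{\mathcal{I}}$ or with containment. Each has clique number at most $\omega$, so a product colouring handles both with $\omega^2$ colours.

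Inside one colour class $H$, only crossing-type edges remain, namely $I<_{G'}J$ with $L(I)<L(J)<R(I)<R(J)$. So $\omega(H)\le\omega$ is an honest clique bound. Moreover, the absence of the other two edge types forces $<_{G'}$ to place $I$ before $J$ whenever $I$ lies entirely left of $J$, and to place each interval after every interval nested in it. This gives Lemma~\ref{lem:nothree}: if $L(I_1)<L(I_2)<R(I_1)<L(I_3)<R(I_3)<R(I_2)$, then $I_1I_2\in E(H)$.

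Because $H$ is only a subgraph of a circle graph, Davies' argument still does not transfer verbatim. The paper modifies the pillar placement so that every arch keeps an interval of $\mathcal{I}_H$ to play the role of $I_3$. This converts a chain of length $4\omega+1$ in the strong dominance poset into a clique of size $\omega+1$ in $H$. The result is $\chi(H)\le 15\omega^2+2\omega\log_2\omega+6\omega$, and multiplying by $\omega^2$ gives the stated bound.
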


It is natural to see if we can strengthen the above result to a wider class of backedge graphs than just interval graphs. In particular, every interval graph is \textbf{chordal}, which means that there are no induced cycles of length more than three. Here, we show that the answer is negative, the class of tournaments that contain a chordal graph as a backedge graph is not $\vec{\chi}$-bounded. Thus it is not obvious how to extend Theorem \ref{thm:finaltheorem} to a wider class of tournaments.

\begin{theorem}
\label{thm:chordalexample}
    For every integer $k$, there exists a tournament $T$ and ordering $<$ such that $B(T,<)$ is chordal, with $\vec{\omega}(T) \leq 2$ and $\vec{\chi}(T) \geq k$. 
\end{theorem}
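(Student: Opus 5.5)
The approach is a recursive construction. I would build tournaments $T_1, T_2, \dots$ together with two orderings $\sigma_k$ and $\tau_k$ of $V(T_k)$, maintaining three invariants: (i) $B(T_k,\sigma_k)$ is triangle-free, which gives $\vomega(T_k)\le 2$; (ii) $B(T_k,\tau_k)$ is chordal; (iii) $\vchi(T_k)\ge k$. The two orderings must really be different. A graph that is both chordal and triangle-free is a forest, so it has chromatic number at most $2$. By Lemma~\ref{lem:minchromD}, the chordal backedge graph must therefore contain large cliques. The only role of $\tau_k$ is to witness chordality, while $\sigma_k$ witnesses $\vomega\le2$.

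For the recursion I would use closure operations for chordal graphs that are easy to control in the ordering $\tau$:
\begin{itemize}
\item disjoint unions, obtained by placing copies consecutively with all arcs between them forward in $\tau$;
\item adding a vertex whose backedge neighbourhood is a clique, or is all of one component.
\end{itemize}
For example, if $x$ is placed first in $\tau$ and $x$ is beaten by exactly one whole block $Y$, then $x$ is complete to $Y$ in the backedge graph. Adding such a vertex to a disjoint union of chordal graphs keeps it chordal, since any induced cycle through $x$ would have to lie in $Y\cup\{x\}$.

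To push up the dichromatic number I would use a ``one copy per colour pattern'' gadget, generalising $\Delta(1,T_k,T_k)$. I would take many disjoint copies of $T_k$ and a set $S$ of new vertices. Each $s\in S$ is beaten by a carefully chosen family of copies and beats the rest. The family is chosen so that in any $k$-dicolouring, some $s$ closes a monochromatic directed triangle through an arc inside a monochromatic copy. Colourings are classified by the colour of a forced monochromatic arc in each copy, which exists because $\vchi(T_k)\ge k$. A Ramsey- or shift-graph-type choice of the families then forces a conflict, giving invariant (iii).

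The main obstacle is invariant (i). In the naive gadget $\Delta(1,T,T)$, every ordering creates a vertex complete to a whole copy, and nesting this makes $\vomega$ grow. The new vertices must therefore be placed in $\sigma$ so that each is backedge-adjacent, in $B(T_{k+1},\sigma_{k+1})$, only to a stable set. A natural place for that stable set is one endpoint of a backedge in each chosen copy: we place $s$ in $\sigma$ between the two endpoints, or after a copy that it beats only partially. With this placement no triangle of the form $s$ plus an edge can arise. I would first try to make each $s$ beat exactly a ``lower half'' (in $\sigma_k$) of a copy and lose to the upper half, with the halves chosen so that their backedge neighbourhoods are independent sets. I would then recheck that the same arc pattern, read in $\tau$, still adds $s$ as a vertex complete to a single block, so that invariant (ii) survives. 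Reconciling these two placements is where I expect the real work lies.
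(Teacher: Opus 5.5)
Your proposal has a genuine gap: it never gives a construction. The families of copies attached to each $s\in S$, the positions of $s$ in $\sigma_{k+1}$ and $\tau_{k+1}$, and the verification of (i)--(iii) are all left open. The step you defer (``reconciling these two placements'') is the whole content of the theorem.

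Moreover, the gadget as stated cannot work. Suppose $s$ is beaten by some whole copies of $T_k$ and beats the remaining whole copies. Then for every arc $uv$ inside a copy, $u$ and $v$ lie on the same side of $s$, so no directed triangle through $s$ uses an arc inside a copy. Once you let $s$ split copies, your own chordality rules constrain it heavily. With the copies consecutive in $\tau$ and all arcs between them forward, every component of the backedge graph in $\tau$ meets at most one copy. So a vertex added with a clique or a whole component as its backedge neighbourhood is backedge-adjacent to at most one copy. Towards all other copies, except possibly the one inside whose $\tau$-block it sits, it is beaten by the copies preceding it in $\tau$ and beats those following it. Your families are therefore essentially nested prefixes of one fixed order of the copies, not freely chosen Ramsey- or shift-graph-type families. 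Nothing in the proposal shows that such families force $\vchi(T_{k+1})\ge k+1$.

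The paper avoids all of this with two observations your plan is missing. First, Lemma~\ref{lem:boundlemma} gives $\vchi(T)\ge \chi(B(T,\sigma))/\omega(B(T,\sigma))$. So no dicolouring gadget is needed: it suffices that $B(T,\sigma)$ is triangle-free with $\chi\ge 2k$. Second, $B(T,\tau)$ is obtained from $B(T,\sigma)$ by toggling exactly the pairs on which $\sigma$ and $\tau$ disagree. The problem thus becomes one about ordered graphs: find a triangle-free graph $G$ of large chromatic number and two orderings whose inverted pairs, toggled into $G$, give a chordal graph.

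Burling graphs do exactly this (Lemma~\ref{lem:burling}). Take $G$ fully derived from a Burling tree, $\sigma$ the DFS preorder and $\tau$ the DFS postorder, with last-born children visited last.
\begin{itemize}
\item The inverted pairs are exactly the ancestor--descendant pairs.
\item These are never edges of $G$, since $c(v)$ lies in the subtree of a sibling of $v$. Hence $B(T,\tau)$ is $G$ plus all ancestor--descendant pairs.
\item The $\tau$-later neighbours of any $v$ are its ancestors together with $c(v)$. These lie on a single branch, so they are pairwise adjacent, and $\tau$ is a perfect elimination ordering.
\end{itemize}
The large cliques you correctly predicted in $B(T,\tau)$ are exactly these branches. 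A recursive scheme like yours would in effect have to rebuild this tree structure.
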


We give a brief outline of the proof of Theorem \ref{thm:finaltheorem} here. Letting $T$ be any crossing tournament on an interval system $\intI$, we select a backedge graph $G'$ realizing the clique number of $T$, meaning $\omega(G') = \vomega(T)$. The graph $G'$ is an ordered graph whose vertices are intervals, with ordering $<_{G'}$ not necessarily equivalent to an ordering by left endpoint. It is enough to find a polynomial function $f$ so that for any such $G'$ we have $\chi(G') \leq f(\omega(G'))$.

We then reduce this problem to finding such a polynomial function for a subgraph $H$ of $G'$ that is a (not necessarily induced) subgraph of the \textbf{overlap graph of $\intI$}, whose vertex set is $\intI$ ordered by left endpoint and whose edges are given by pairs $I, J$ of intervals in $\intI$ with $L(I) < L(J) < R(I) < R(J)$; that is, the edges of the overlap graph of $\intI$ are the pairs of intervals which overlap. Overlap graphs are also known as circle graphs (the intersection graphs of chords on a circle)\footnote{Incidentally, the overlap graph of $\intI$, as an unordered graph, is the same as the backedge graph of the crossing tournament $T$ corresponding to $\intI$ with the ordering $<_R$ given by right endpoints of the intervals.}. The $\chi$-boundedness of circle graphs was originally established by Gy{\'a}rf{\'a}s in 1985 \cite{gyarfasCircle}. Resolving a long-standing question, Davies and McCarty showed in 2021 \cite{mccarty} that the class of circle graphs is polynomially $\chi$-bounded by an $O(\omega^2)$ function, followed in 2022 by an improvement of Davies \cite{davies} to an $O(\omega \log \omega)$ function that is best possible up to a constant factor (Kostochka \cite{kostochka}).

To bound the chromatic number of $H$, we closely follow the proof in \cite{davies}, making adjustments as needed. The general idea is to take an ordered sequence $\mathcal{P}$ of \textbf{pillars} $p_1,p_2,\dots, p_r$, or points in $(0,1)$, and place them one at a time, \textbf{assigning to the pillar $p_i$} all intervals of $\intI_H$ that contain $p_i$ and have not previously been assigned to a pillar. We let the set of intervals assigned to pillar $p_i$ be given by $A_{p_i}$, and we call $\mathcal{P}$ a \textbf{pillar system}.

Each time we place a pillar, we then give a nice colouring $\phi_{p_i}$ (in a sense explained precisely in Section \ref{sec:onepillar}) to the intervals of $A_{p_i}$. In particular, we will choose $\phi_{p_i}$ in such a way that $\phi_{p_1} \cup \dots \cup \phi_{p_i}$ is a proper partial colouring of $H$ determined uniquely by $H$ and the choices of $p_1, \dots, p_i$, and such that $\phi_{p_i}$ uses at most $\omega(H)$ colours that were previously unused in the process. Ultimately, we show that there is a way to choose $\mathcal{P}$ such that the resulting colouring uses a number of colours at most polynomial in $\omega$, from which Theorem \ref{thm:finaltheorem} follows.

It is natural to ask why we cannot just use the exact same proof from \cite{davies}. The main issue is the critical Lemma 10 of \cite{davies}, Davies' Tur\'{a}n-type result for bounding ``how bad things can get'' during the process of adding pillars and colouring. If this lemma worked in our setting, the result would follow quickly; unfortunately, it does not. For one, it requires Lemma 3 of the same paper, a technical result about simplifying colourings when all intervals contain one pillar, and the natural extension of this lemma to our setting does not hold in its entirety. Additionally, the end of Davies' Lemma 10 results in a contradiction formed by pairwise overlapping intervals assigned to different pillars forming a clique that is too large; in our setting, since we use only a subgraph of the circle graph, we also require additional relations to hold to form a clique, which do not appear feasible to analyze across intervals assigned to different pillars.

However, we are able to bypass this obstacle by modifying the method in \cite{davies} for placing pillars to ensure that we leave uncoloured intervals between pillars to take advantage of the structure of our graph, while still being able to colour those intervals at the end with few colours.

\section{Crossing Tournaments are Polynomially $\vec{\chi}$-bounded}

We show that the class of crossing tournaments is polynomially $\vec{\chi}$-bounded. Let $\intI$ be an interval system, and let $G$ be its corresponding ordered graph with total order $<_G$ (corresponding to the total ordering of left endpoints of intervals in $\intI$). Let $T$ be the tournament admitting $G$ as a backedge graph. We aim to give a function bounding $\vchi(T)$ by a polynomial in $\vomega(T)$. Let $G'$ be a particular backedge graph of $T$ such that $\omega(G') = \vomega(T)$. Let $<_{G'}$ be the total ordering of $\intI$ used by the ordered graph $G'$. It suffices to bound $\chi(G')$ by a polynomial in $\omega(G')$, since $\vchi(T) \leq \chi(G')$ by Lemma \ref{lem:minchromD}.

\subsection{Reducing to a Subgraph of a Circle Graph}

We first note that the edges $IJ$ of $G'$ can be partitioned into three sets depending on whether the intervals are disjoint, one is fully contained in the other, or the intervals overlap. The three sets are:
\begin{itemize}
    \item Type 1 edges ($I, J$ disjoint): $I >_{G'} J$ and $R(I) < L(J)$. 
    \item Type 2 edges ($J \subseteq I$): $I <_{G'} J$ and $R(I) > R(J)$ and $L(I) < L(J)$.
    \item Type 3 edges ($I$ and $J$ overlap): $I <_{G'} J$ and $L(I) < L(J) < R(I) < R(J)$. 
\end{itemize}
\begin{figure}
    \centering
    \includegraphics[width=0.9\linewidth]{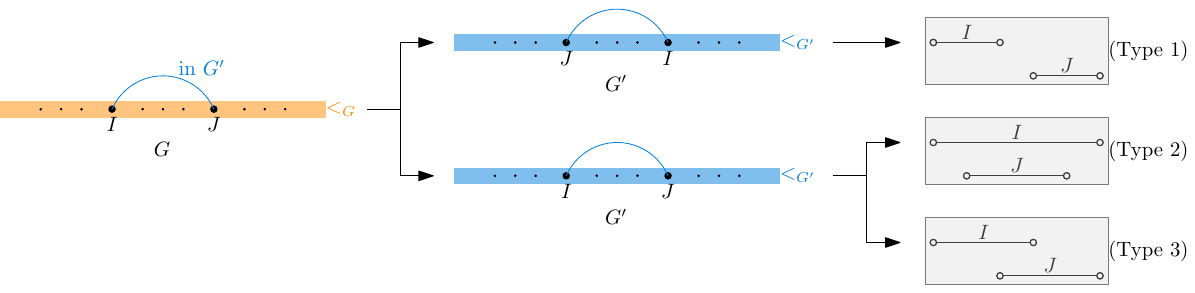}
    \caption{Every edge $IJ$ of $G'$ with $L(I) < L(J)$ is of one of these three types.}
    \label{fig:types}
\end{figure}

See Figure \ref{fig:types}. Let $E_i$ refer to the set of type $i$ edges of $G'$ for $i \in \{1,2,3\}$.

\begin{lemma}\label{lem:G1G2}

Let $G', E_1, E_2$ be as above. Then the subgraph $G_{12}' = (V(G'),E_1 \cup E_2)$ of $G'$ satisfies $\chi(G_{12}') \leq (\omega(G'))^2$.

\end{lemma}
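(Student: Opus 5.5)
The plan is to show that each of the two edge classes $E_1$ and $E_2$ separately forms a comparability graph on $V(G')$. Then Lemma~\ref{lem:compperf} bounds the chromatic number of each by its clique number, which is at most $\omega(G')$. Taking a product of the two colourings then gives the bound $(\omega(G'))^2$.

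\textbf{Step 1: type 1 edges.} Define a relation $\prec_1$ on $\intI$ by $J \prec_1 I$ if and only if $R(J) < L(I)$ and $I <_{G'} J$. Then $IJ \in E_1$ exactly when $I$ and $J$ are comparable under $\prec_1$, after relabelling the pair so the definition of a type 1 edge applies. This relation is irreflexive and antisymmetric, since $R(J)<L(I)$ and $R(I)<L(J)$ cannot both hold. It is transitive: if $K \prec_1 J \prec_1 I$, then $R(K) < L(J) < R(J) < L(I)$, and $I <_{G'} J <_{G'} K$. Hence $\prec_1$ is a strict partial order, and $G_1 = (V(G'),E_1)$ is its comparability graph.

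\textbf{Step 2: type 2 edges.} Define $J \prec_2 I$ if and only if $L(I)<L(J)$, $R(J)<R(I)$ and $I <_{G'} J$; that is, $J$ is strictly nested in $I$ and comes later in $<_{G'}$. Then $IJ\in E_2$ exactly when $I$ and $J$ are comparable under $\prec_2$. Transitivity follows because nesting of intervals is transitive and $<_{G'}$ is a total order. So $G_2=(V(G'),E_2)$ is also a comparability graph.

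\textbf{Step 3: combine.} $G_1$ and $G_2$ are subgraphs of $G'$, so $\omega(G_i)\le \omega(G')$. By Lemma~\ref{lem:compperf} each $G_i$ is perfect, so each has a proper colouring $c_i$ using at most $\omega(G')$ colours. Colour each vertex $v$ by the pair $(c_1(v),c_2(v))$. Every edge of $G_{12}'$ lies in $E_1$ or in $E_2$, so its endpoints differ in the corresponding coordinate. This gives a proper colouring of $G_{12}'$ with at most $(\omega(G'))^2$ colours.

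There is no real obstacle here. The only point needing care is the orientation bookkeeping in Steps 1 and 2. Each type is defined with a specific labelling of the pair $I,J$, and I must check that the chosen relation orients every such edge consistently. I also need the $<_{G'}$-condition to run in the direction that makes transitivity go through: reversed relative to position on the line for type 1, and aligned with nesting for type 2.
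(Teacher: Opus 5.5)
Your proposal is correct and matches the paper's proof: the same two partial orders (disjoint pairs ordered with the $<_{G'}$-later interval on the left, nested pairs ordered with the outer interval earlier in $<_{G'}$), perfection via Lemma~\ref{lem:compperf}, and the product colouring. The only difference is that you check irreflexivity, antisymmetry and transitivity explicitly, whereas the paper simply asserts that these relations are partial orders.
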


\begin{proof}

The subgraph $G_1 = (V(G'), E_1)$ consisting of only type $1$ edges is a comparability graph on $V(G')$ with respect to the partial order $<_1$ defined by $I <_1 J$ if and only if  $I >_{G'} J$, and $I <_{\intI} J$. Likewise, the subgraph $G_2 = (V(G'), E_2)$ consisting of only type $2$ edges forms a comparability graph on $V(G')$ with respect to the partial order $<_2$ defined by $I <_2 J$ if and only if  $I <_{G'} J$ and $J \subseteq I$.

Thus each of $G_1$ and $G_2$ is perfect by Lemma \ref{lem:compperf}, and so $\chi(G_i) = \omega(G_i) \leq \omega(G')$ for $i \in \{1,2\}$. Therefore, letting $g_i: V(G') \rightarrow [\omega(G')]$ be a proper colouring of $G_i$ for $i \in \{1,2\}$, we may properly colour $G_{12}'$ via the product colouring $g: V(G') \rightarrow [\omega(G')]^2$ given by $g(v) = (g_1(v), g_2(v))$.
\end{proof}

Based on this, we can reduce the problem as follows: 

\begin{obs}\label{obs:reduction}
Let $T$ be a crossing tournament, and let $G'$ be a backedge graph of $T$ realizing clique number. Let $H_1, \dots, H_{\omega(G')^2}$ be the subgraphs of $G'$ induced by restricting $G'$ to the vertices of one of the colour classes of an $(\omega(G'))^2$-colouring of $(V(G'), E_1 \cup E_2)$ as given by Lemma \ref{lem:G1G2} (noting that these are not proper colour classes on all of $G'$ since we have type 3 edges). If we colour each of the $H_i$ with a separate set of colours, we obtain a colouring of $G'$. 

Thus, if there exists a non-decreasing universal polynomial $f$ such that for all $i$ we have $\chi(H_i) \leq f(\omega(H_i))$, then since for all $i$ we have $\omega(H_i) \leq \omega(G')$ it follows that
\begin{align*}
 \vchi(T) \leq \chi(G') &\leq \sum_{i=1}^{(\omega(G'))^2} \chi(H_i) \\ &\leq (\omega(G'))^2 f(\max_i (\omega(H_i)) \\ &\leq (\omega(G'))^2 f(\omega(G')) \\&= (\vomega(T))^2f(\vomega(T)).
\end{align*}

\end{obs}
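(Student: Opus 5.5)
The observation has two parts: that colouring each $H_i$ with its own palette gives a proper colouring of $G'$, and the displayed chain of inequalities. My plan is to check the colouring claim first and then justify each inequality in the chain in order, using only Lemma \ref{lem:minchromD}, Lemma \ref{lem:G1G2}, and the choice of $G'$.

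\textbf{The colouring is proper.} Fix the colouring $g$ of $G_{12}'=(V(G'),E_1\cup E_2)$ from Lemma \ref{lem:G1G2}. It uses at most $(\omega(G'))^2$ colours, so its colour classes $V_1,\dots,V_{(\omega(G'))^2}$ partition $V(G')$; some may be empty, and then the corresponding $H_i$ contributes nothing. Set $H_i=G'[V_i]$. For each $i$ take an optimal proper colouring $c_i$ of $H_i$ using palette $P_i$, with the palettes pairwise disjoint. Define $c$ on $V(G')$ by $c(v)=c_i(v)$ for $v\in V_i$. Let $uv$ be an edge of $G'$.
\begin{itemize}
\item If $u,v$ lie in the same $V_i$, then $uv\in E(H_i)$ because $H_i$ is induced, so $c_i(u)\neq c_i(v)$.
\item Otherwise $c(u)$ and $c(v)$ come from disjoint palettes, so they differ.
\end{itemize}
Hence $c$ is proper and $\chi(G')\le\sum_i\chi(H_i)$. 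This is the second inequality in the display. Note that this step only uses the fact that the $V_i$ partition $V(G')$; the fact that they are independent in $G_{12}'$ matters only later, when the $H_i$ are shown to be subgraphs of a circle graph.

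\textbf{The remaining inequalities.}
\begin{itemize}
\item The first inequality, $\vchi(T)\le\chi(G')$, holds because $G'$ is a backedge graph of $T$, so Lemma \ref{lem:minchromD} applies.
\item For the third, each summand satisfies $\chi(H_i)\le f(\omega(H_i))\le f(\max_j\omega(H_j))$, since $f$ is non-decreasing. There are at most $(\omega(G'))^2$ summands, which gives the bound.
\item For the fourth, $H_i$ is an induced subgraph of $G'$, so $\omega(H_i)\le\omega(G')$. Monotonicity of $f$ then gives $f(\max_j\omega(H_j))\le f(\omega(G'))$.
\item The final equality is the choice of $G'$: it realizes the clique number, so $\omega(G')=\vomega(T)$.
\end{itemize}

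There is no real obstacle here; the argument is bookkeeping. The two points that need care are keeping the palettes disjoint and using that each $H_i$ is induced, so that every edge of $G'$ with both ends in $V_i$ is an edge of $H_i$. It is also worth noting where the hypotheses on $f$ are used. Monotonicity is needed to pass from $\omega(H_i)$ to $\omega(G')$. Polynomiality is preserved because multiplying by $(\omega(G'))^2$ keeps the bound polynomial in $\vomega(T)$.
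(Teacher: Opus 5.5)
Your proposal is correct and matches the paper's argument. Disjoint palettes on the induced subgraphs $H_i$ give a proper colouring of $G'$, and each step of the chain follows from Lemma~\ref{lem:minchromD}, monotonicity of $f$, $\omega(H_i)\le\omega(G')$, and $\omega(G')=\vomega(T)$; your added remarks on empty classes and on properness needing only that the $V_i$ partition $V(G')$ are accurate refinements of the paper's brief justification.
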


Thus, we need only show that such an $f$ exists. From now on, we will let $H$ be a subgraph of $G'$ induced by one of the colour classes of a colouring given by Lemma \ref{lem:G1G2} (noting that the vertex set of $H$ is thus a subset of the interval system $\intI$ defining $T$). It follows that all edges of $H$ are of type 3. 

\subsection{Polynomially $\chi$-Bounding ``Local Circle" Graphs}

We start by summarizing the necessary aspects of the subgraph $H$ of $G'$ we will be working with from now on, and combine them into a definition of $H$ that we can use independently of the notation from the previous section. In particular, the last two statements in the following definition are due to the fact that we have no edges of types 1 and 2 in $H$. 

For clarity, let $\intI_H = V(H)$ denote the intervals from $\intI$ that are vertices of $H$; we will use this notation when we wish to emphasize the structure of the vertex set as an interval system. In the following definition, $<_H$ plays the role of the restriction of the total ordering $<_{G'}$ to $\intI_H$.

\begin{definition}\label{def:H}
Let $H$ be an ordered graph whose vertex set is an interval system $\intI_H$ with ordering $<_H$ (not necessarily an ordering by left endpoint). We say that $H$ is a \textbf{crossing graph} if $E(H)$ is equal to the set of pairs of intervals $I, J$  such that
\begin{enumerate}
    \item[(a)] $I <_H J$, and
    \item[(b)] $L(I) < L(J) < R(I) < R(J)$, 
\end{enumerate}
and furthermore, the following statements about $<_H$ hold for any pair of intervals $I,J \in \intI_H$ where $L(I) < L(J)$:
\begin{itemize}
    \item[(i)] Whenever $R(I) < L(J)$, we have $I <_H J$.
    \item[(ii)] Whenever $R(I) > R(J)$, we have $I >_H J$.
\end{itemize}
\end{definition}

Unlike with the other edge types from the previous section, the edges of $H$ may not form a comparability graph, because the relation $L(J) < R(I)$ is not generally transitive.   However, the graph with vertex set $\intI_H$ and edges defined only by (b) is the overlap graph corresponding to $\intI_H$, so $H$ is a subgraph of that overlap graph with some restrictions. Therefore, we may give the polynomial function for $H$ that we want in this setting using an argument closely following that of a result of Davies~\cite{davies} on circle graphs (with some critical modifications). 

\subsubsection{Subgraphs of $H$ Assigned to One Pillar}\label{sec:onepillar}

Recall that a pillar is a point in $(0,1)$, and that the idea (as in~\cite{davies}) is to assign intervals of $\intI_H$ to pillars that they contain. We first investigate what happens when one pillar suffices, meaning that all intervals of $\intI_H$ contain the same point. It is well-known (see, for example, \cite{davies}) that the overlap graph of an interval system $\intI_H$ in which each interval contains some fixed point $p \in (0,1)$ is a permutation graph, or equivalently, a graph that is both a comparability graph and a co-comparability graph~\cite[Proposition 4.7.1]{brandstadt1999graph}. The graph $H$ is only a subgraph of a circle graph, so in general the corresponding analogue is not a permutation graph. However, we show that it is still a comparability graph, and hence perfect by Lemma~\ref{lem:compperf}. This suffices for our purposes.

\begin{lemma}\label{lem:compgraph}  Let $H$ be a crossing graph. Let $X \subseteq V(H)$ and $ p \in (0, 1)$ such that every interval in $X$ contains $p$. Then $H[X]$ is a comparability graph, and hence $\omega(H)$-colourable. 
\end{lemma}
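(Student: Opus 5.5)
The plan is to exhibit an explicit partial order on $X$ whose comparability graph is exactly $H[X]$. Then Lemma~\ref{lem:compperf} gives that $H[X]$ is perfect, so $\chi(H[X]) = \omega(H[X]) \leq \omega(H)$.

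The key observation is that every interval in $X$ contains $p$, so for any $I, J \in X$ we have $L(I) < p < R(J)$. In particular, no two intervals of $X$ are disjoint: each pair is either nested or overlapping. Define a relation $\prec$ on $X$ by $I \prec J$ if and only if $L(I) < L(J)$, $R(I) < R(J)$, and $I <_H J$.

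First I check that $\prec$ is a strict partial order.
\begin{itemize}
\item It is irreflexive, since $L(I) < L(I)$ fails.
\item It is antisymmetric, since $I \prec J$ and $J \prec I$ would force $L(I) < L(J) < L(I)$.
\item It is transitive. If $I \prec J \prec K$, then $L(I) < L(K)$ and $R(I) < R(K)$ follow from transitivity of $<$ on $(0,1)$, and $I <_H K$ follows because $<_H$ is a total order.
\end{itemize}

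Next I check that $I,J$ are $\prec$-comparable exactly when $IJ \in E(H)$.

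Suppose $IJ \in E(H)$ with $I <_H J$. By condition (b) of Definition~\ref{def:H}, $L(I) < L(J)$ and $R(I) < R(J)$, so $I \prec J$.

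Conversely, suppose $I \prec J$. We already have $I <_H J$, $L(I) < L(J)$ and $R(I) < R(J)$, so it remains to show $L(J) < R(I)$. This holds because both intervals contain $p$, giving $L(J) < p < R(I)$. Hence $L(I) < L(J) < R(I) < R(J)$, so (a) and (b) hold and $IJ \in E(H)$. Thus $H[X]$ is the comparability graph of $(X, \prec)$.

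The only potential obstacle is the converse direction: $\prec$ imposes no explicit condition $L(J) < R(I)$, and without it transitivity could produce a disjoint pair. The common pillar $p$ rules this out, since it guarantees that any pair ordered by left and by right endpoints genuinely overlaps. Conditions (i) and (ii) of Definition~\ref{def:H} are not needed here. Condition (ii) is merely consistent with the argument: nested pairs are never edges and are never $\prec$-comparable.

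Finally, $H[X]$ is perfect by Lemma~\ref{lem:compperf}. Therefore $\chi(H[X]) = \omega(H[X]) \leq \omega(H)$, so $H[X]$ is $\omega(H)$-colourable.
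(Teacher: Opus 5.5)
Your proposal is correct and is essentially the paper's proof: the paper also uses the common point $p$ to reduce adjacency in $H[X]$ to the three conditions $L(I)<L(J)$, $R(I)<R(J)$, $I<_H J$. It notes that these define a partial order and then invokes Lemma~\ref{lem:compperf}. You simply spell out the order axioms and both directions of the edge/comparability equivalence in more detail than the paper does.
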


\begin{proof}
    Given $I,J \in X$, we have $L(I), L(J) < p < R(I), R(J)$; consequently, we have $IJ \in E(H)$ if and only if
    \begin{itemize}
        \item $L(I) < L(J)$; 
        \item $R(I) < R(J)$; and
        \item $I <_H J$. 
    \end{itemize}
    This defines a partial order, and hence $H[X]$ is a comparability graph. It follows that $H[X]$ is perfect by Lemma \ref{lem:compperf}, and hence $\omega(H)$-colourable. 
\end{proof}

In the setting of Lemma \ref{lem:compgraph}, Davies \cite{davies} uses a particularly well-structured colouring, which allows optimizing the bound on the number of colours needed later in the proof. While we could define a similar colouring in our setting, we use a different strategy to bound the number of colours which does not rely on the choice of colouring. Therefore, we forego picking a particular colouring. 
\subsubsection{Placing Multiple Pillars}\label{sec:multipillar}

Our process for colouring $H$ proceeds as follows. We start with the interval $(0, 1)$ and an empty set of pillars. Iteratively, we build a sequence $\mathcal{P} = p_1, \dots, p_r$ of points in $(0, 1)$ called \textbf{pillars}. For each pillar $p_i$, we denote by $A_{p_i}$ the set of intervals in $\mathcal{I}_H$ which contain $p_i$ and do not contain any of the pillars $p_1, \dots, p_{i-1}$. We call the set $A_{p_i}$ the \textbf{intervals assigned to $p_i$}. 

\begin{figure}
    \centering
    \includegraphics[width=0.5\linewidth]{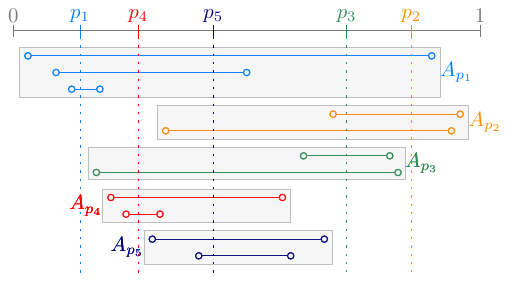}
    \caption{Pillars and the intervals assigned to them. For the arch $K = (p_5, p_3)$, we could choose $K_{p_1} = (0, p_1)$; $K_{p_4} = (p_1, p_4)$; $K_{p_5} = (p_4, p_5)$; $K_{p_3} = (p_3, p_2)$ and $K_{p_2} = (p_2, 1)$ in Lemma \ref{lem:pillarcontinuity}.} 
    \label{fig:placeholder}
\end{figure}

Whenever we add a pillar, we also construct a colouring $\phi_{p_i}$, with the property that together, $\phi_{p_1}, \dots, \phi_{p_r}$ form a colouring $\phi_{\mathcal{P}}$ of the subgraph of $H$ induced by all intervals that have been assigned to a pillar.  Suppose that we add a pillar $p_{r+1}$. We describe how to construct $\phi_{p_{r+1}}$. Let $F_{p_{r+1}}$ be the largest open interval containing $p_{r+1}$ which does not contain any pillars $p_j$ with $j < r+1$. In particular, all intervals in $A_{p_{r+1}}$ necessarily have both endpoints in $F_{p_{r+1}}$ (with one each on either side of $p_{r+1}$). 

Let $C$ be the set of the smallest $\omega$ positive integers that are not used by any $\phi_{p_j}$ with $j \leq r$ to colour any interval with an endpoint in $F_{p_{r+1}}$; the set of such intervals is a superset of the intervals that are adjacent in $H$ to some interval of $A_{p_{r+1}}$. Then define $\phi_{p_{r+1}} : A_{p_{r+1}} \rightarrow C$ to be a colouring of $H[A_{p_{r+1}}]$ using at most $\omega$ colours, as promised by Lemma \ref{lem:compgraph}. 

We now provide further definitions from \cite{davies} that we will need to proceed. We denote by $\phi_{\mathcal{P}}$ the partial proper colouring of $H$ given by $\bigcup_{p_i \in \mathcal{P}} \phi_{p_i}$. We denote by $\chi^*(\mathcal{P})$ the number of colours used by $\phi_{\mathcal{P}}$. Given a pillar system $\mathcal{P}$, an \textbf{arch} $K$ \textbf{of} $\mathcal{P}$ is a maximal open interval in $(0,1)$ containing no pillars in $\mathcal{P}$. In particular, this means that the endpoints of an arch are among $\{0,1\} \cup \mathcal{P}$.

Given an arch $K$, we will use the notation $\intI_H(K)$ to denote the set of intervals in $H$ that have \textit{exactly one} endpoint in $K$; that is, $\intI_H(K)$ consists of intervals coloured by $\mathcal{P}$ that might interfere later with colouring intervals contained strictly in $K$. Let $\intI_H(K,p_i)$ denote the set of intervals with exactly one endpoint in $K$ that are assigned to pillar $p_i$.

We restate the following auxiliary lemma from \cite{davies} for our usage; since the lemma depends only on $\intI_H$ and $\mathcal{P}$ and not on the adjacencies defining $H$, the proof is identical and so will not be restated. See also Figure \ref{fig:placeholder}. 

\begin{lemma}[Davies, \cite{davies}, Lemma 3.2]\label{lem:pillarcontinuity}

Given a crossing graph $H$, a pillar system $\mathcal{P}$ for $\intI_H$, and an arch $K$ of $\mathcal{P}$, there exist disjoint open intervals $\{K_{p_i}: p_i \in \mathcal{P}\}$ in $(0,1)$ such that each $I \in \intI_H(K,p_i)$ has one endpoint in $K$ and the other in $K_{p_i}$. 
    
\end{lemma}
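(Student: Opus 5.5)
We will prove the lemma by induction on the number of pillars placed, tracking, for the fixed arch $K$, the structure of the complement of $K$. Write $K=(a,b)$ with $a,b\in\{0,1\}\cup\mathcal{P}$. The key structural fact we use is that each pillar $p_i$ is assigned only intervals contained in $F_{p_i}$, the arch of $\{p_1,\dots,p_{i-1}\}$ containing $p_i$. Since pillars are placed one at a time, the arches are nested: every arch of a later pillar system lies inside exactly one arch of an earlier one. So $F_{p_1}=(0,1)$, and each $F_{p_i}$ is an arch of the system $p_1,\dots,p_{i-1}$.

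Fix a pillar $p_i$ with $\intI_H(K,p_i)\neq\emptyset$. Every $I\in\intI_H(K,p_i)$ contains $p_i$, has both endpoints in $F_{p_i}$, and has exactly one endpoint in $K$. Hence $K\subseteq F_{p_i}$, since $K$ is contained in some arch of $p_1,\dots,p_{i-1}$ and that arch meets $I\subseteq F_{p_i}$ only if it is $F_{p_i}$ itself. Moreover $p_i\notin K$, so $p_i\le a$ or $p_i\ge b$. Suppose $p_i\ge b$, the other case being symmetric. Then the endpoint of $I$ in $K$ is $L(I)$, and $R(I)$ lies in $(p_i,R(F_{p_i}))$. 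Moreover $R(I)$ does not lie beyond any pillar $p_j$ with $j<i$, because $I$ contains no earlier pillar; this is automatic since $R(F_{p_i})$ is the first earlier pillar (or $1$) to the right of $p_i$.

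Define $K_{p_i}$ as follows:
\begin{itemize}
\item if $p_i\ge b$, let $K_{p_i}=(p_i,R(F_{p_i}))$, the open interval from $p_i$ to the next earlier pillar on its right;
\item if $p_i\le a$, let $K_{p_i}=(L(F_{p_i}),p_i)$.
\end{itemize}
For pillars with no intervals in $\intI_H(K,p_i)$, choose $K_{p_i}$ to be tiny disjoint intervals inside $K$, or simply any empty-compatible choice far from the others. The endpoint property is then immediate from the paragraph above.

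The main point is disjointness. Consider two relevant pillars $p_i,p_j$ with $i<j$, both on the right of $K$. Both $F_{p_i}$ and $F_{p_j}$ contain $K$, so $F_{p_j}\subseteq F_{p_i}$, and $p_i$ is an endpoint of, or lies outside, $F_{p_j}$. Since $p_j\ge b$ lies in $F_{p_j}$, which is to the left of $p_i$ on the right side of $K$, we get $p_j<p_i$ and $R(F_{p_j})\le p_i$. Hence $(p_j,R(F_{p_j}))$ and $(p_i,R(F_{p_i}))$ are disjoint.

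Left-side intervals lie left of $a\le$ (or below) $K$, and right-side ones lie right of $b$, so they are disjoint from each other. They are also disjoint from $K$ itself, which justifies choosing the irrelevant $K_{p_i}$ inside $K$. The only delicate step is this nesting argument showing that later relevant pillars on the same side are strictly closer to $K$ and bounded by earlier ones.
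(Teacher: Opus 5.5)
Your proof is correct and is essentially the argument the paper defers to (Davies' Lemma 3.2). For each relevant pillar, $K_{p_i}$ is the component of $F_{p_i}\setminus\{p_i\}$ on the side away from $K$, and disjointness follows from the nesting $F_{p_j}\subseteq F_{p_i}$ of the successive arches containing $K$; you present this directly rather than through the induction you announce. Since the paper later treats each $K_p$ as a subset of $(0,1)\setminus K$, it is cleaner to take $K_{p_i}=\emptyset$ for irrelevant pillars instead of placing them inside $K$.
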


Informally, this means that the not-in-$K$ endpoints of intervals in $\intI_H(K)$ corresponding to different pillars do not interleave. We require the following definition from \cite{davies}.
\begin{definition}\label{def:degree}

Let $H$ be a crossing graph, and let $\mathcal{P}$ be a pillar system for $\intI_H$. Let $K$ be an arch of $\mathcal{P}$, and let $J$ be any interval (not necessarily in $\intI_H$) with both endpoints in $K$. Then the \textbf{degree of} $J$ \textbf{with respect to} $\mathcal{P}$, notated as $\deg_{\mathcal{P}}(J)$, is equal to the number of colours of $\phi_{\mathcal{P}}$ used by intervals with an endpoint in $J$.
    
\end{definition}

Note that if $J \in \intI_H$, this is \textit{not} the same as the degree of $J$ as a vertex in $H$ (and our arguments will not involve vertex degrees); we generally use Definition \ref{def:degree} in the context of considering placing new pillars, and the new arches this would create. The following lemma from \cite{davies} (very slightly generalized for our purposes: Davies \cite{davies} works with a uniform bound $t$ on the degrees of intervals $J$, whereas we track how much the degree increases for each individual interval) holds exactly as proven there.

\begin{lemma}[Davies, \cite{davies}, Lemma 3.4]\label{lem:inductpillars}

Let $\mathcal{P}$ be a pillar system for $\intI_H$. Let $K$ be an arch of $\mathcal{P}$, and let $Q$ be a finite collection of pillars in $K$. Then there exists a pillar system $\mathcal{P}^*$, formed by adding the pillars of $Q$ in a specified order after those of $\mathcal{P}$, such that

\begin{itemize}
    \item $\chi^*(\mathcal{P}^*) \leq \max(\chi^*(\mathcal{P}),\deg_\mathcal{P}(K)+\omega(H)\lceil \log_2(|Q|+1)\rceil)$, and
    \item For each interval $J$ of $K \backslash Q$, we have $\deg_{\mathcal{P}^*}(J) \leq \deg_{\mathcal{P}}(J) +\omega(H)\lceil \log_2(|Q|+1)\rceil$.
\end{itemize}
    
\end{lemma}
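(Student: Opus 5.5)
The plan is a divide-and-conquer on $Q$: the order in which the pillars of $Q$ are added is chosen by repeatedly taking medians. Write $\omega=\omega(H)$ and $m=|Q|$, and induct on $m$. For $m=0$ there is nothing to prove. Otherwise, list the points of $Q$ in increasing order as $q_1<\dots<q_m$ and let $q=q_{\lceil m/2\rceil}$ be the median. We add $q$ first, obtaining the pillar system $\mathcal{P}'=\mathcal{P},q$.

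The first step is to understand adding $q$. The intervals assigned to $q$ are the intervals of $\intI_H$ contained in $K$ (since $K$ is an arch of $\mathcal{P}$) that contain $q$. By the construction in Section~\ref{sec:multipillar}, $\phi_q$ uses the smallest $\omega$ colours not used on intervals with an endpoint in $F_q=K$. There are exactly $\deg_{\mathcal{P}}(K)$ such used colours, so every colour of $\phi_q$ is at most $\deg_{\mathcal{P}}(K)+\omega$. This gives $\chi^*(\mathcal{P}')\le\max(\chi^*(\mathcal{P}),\deg_{\mathcal{P}}(K)+\omega)$. In addition, for every subinterval $J\subseteq K$ (in particular each new arch $K'\in\{K\cap(0,q),\,K\cap(q,1)\}$), at most $\omega$ new colours appear on endpoints in $J$, so $\deg_{\mathcal{P}'}(J)\le\deg_{\mathcal{P}}(J)+\omega$.

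The second step is to recurse. Each new arch $K'$ contains at most $\lfloor m/2\rfloor$ points of $Q\setminus\{q\}$, so the induction hypothesis applies to it with $Q'=Q\cap K'$. Adding the pillars of $Q'$ only changes colours on intervals inside $K'$, and the two arches are disjoint, so we can process the left arch fully and then the right arch. The key bookkeeping fact is that the recursion inside one arch does not affect the degrees inside the other. This holds because intervals assigned to pillars in $K'$ have both endpoints in $K'$, so degrees of subintervals of the other arch are unchanged.

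Applying the hypothesis gives
\[\chi^*\le\max\bigl(\chi^*(\mathcal{P}'),\ \deg_{\mathcal{P}'}(K')+\omega\lceil\log_2(\lfloor m/2\rfloor+1)\rceil\bigr).\]
Then $\deg_{\mathcal{P}'}(K')\le\deg_{\mathcal{P}}(K)+\omega$, and $\lceil\log_2(\lfloor m/2\rfloor+1)\rceil+1\le\lceil\log_2(m+1)\rceil$. The latter follows from $2(\lfloor m/2\rfloor+1)\le m+2$ together with a short integer check: the ceiling $\lceil\log_2(x+1)\rceil$ is the least $k$ with $2^k\ge x+1$, and if $2^k\ge m+1$ then $2^{k-1}\ge\lfloor m/2\rfloor+1$. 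For an interval $J$ of $K\setminus Q$, $J$ lies in one arch $K'$ and is an interval of $K'\setminus Q'$. Its degree grows by at most $\omega$ at step $q$ and by at most $\omega\lceil\log_2(\lfloor m/2\rfloor+1)\rceil$ afterwards, giving the same total bound.

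The main obstacle I expect is the degree accounting for the first bullet: confirming that $\phi_q$'s colours are bounded by $\deg_{\mathcal{P}}(K)+\omega$ rather than by the global $\chi^*$, and that no colour outside $K'$ later interferes. This rests on the choice that new pillars avoid only colours on intervals with an endpoint in $F$, which is exactly what is counted by the degree.
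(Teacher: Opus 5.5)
Your proposal is correct and is essentially Davies' argument, which the paper invokes without reproducing: you add a median point of $Q$ first, observe that its colours lie in $[\deg_{\mathcal{P}}(K)+\omega]$ and that each sub-arch's degree rises by at most $\omega$, and then recurse independently on the two sub-arches using $\lceil\log_2(\lfloor m/2\rfloor+1)\rceil+1\le\lceil\log_2(m+1)\rceil$. One caveat: your first step reads $\chi^*(\mathcal{P})$ as the largest colour index used by $\phi_{\mathcal{P}}$. That is the reading under which the stated bound holds, since under the literal ``number of colours'' definition, with unrestricted choices of $\phi_{p_i}$, a new pillar could fill a gap below a larger old colour and push the count above the bound.
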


Finally, we need one more lemma of \cite{davies}. Given positive integers $n$ and $m$, the \textbf{strong dominance poset $P_{n,m}$} has vertex set $[n] \times [m]$ and partial order $<_{P_{n,m}}$ satisfying that $(i_1,j_1) <_{P_{n,m}} (i_2,j_2)$ if and only if $i_1 < i_2$ and $j_1 < j_2$. A \emph{chain} in a poset is a set of pairwise comparable elements. 

\begin{lemma}[Davies, \cite{davies}, Theorem 4.2]\label{lem:strongposet}

Let $a,n,m$ be positive integers with  $a \leq \min(n,m)$. Let $P_{n,m}$ be the strong dominance poset with vertex set $[n] \times [m]$. If $S \subseteq [n] \times [m]$ does not contain the elements of a chain  in $P_{n,m}$ of length greater than $a$, then \\ $$|S| \leq a(n+m-a).$$
    
\end{lemma}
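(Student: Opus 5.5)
The plan is to cover $[n]\times[m]$ by chains of $P_{n,m}$ and bound the size of $S$ on each chain separately. The natural chains are the diagonals
$$D_c=\{(i,j)\in[n]\times[m] : j-i=c\},\qquad c\in\{1-n,\dots,m-1\}.$$
There are $n+m-1$ of them and they partition $[n]\times[m]$.

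First I will check that each $D_c$ is a chain of $P_{n,m}$. Two distinct elements $(i_1,i_1+c)$ and $(i_2,i_2+c)$ with $i_1<i_2$ satisfy both $i_1<i_2$ and $i_1+c<i_2+c$, so they are comparable in the strong dominance order. Since $S$ contains no chain of length greater than $a$, every subset of $D_c$ is a chain, and hence $|S\cap D_c|\le \min(a,|D_c|)$. Summing over the partition gives
$$|S|\le \sum_{c}\min(a,|D_c|).$$

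Next I will compute this sum from the diagonal lengths. Let $\mu=\min(n,m)$. The lengths $|D_c|$, read as $c$ runs from $1-n$ to $m-1$, increase $1,2,\dots,\mu$, stay at $\mu$ for a while, and then decrease $\dots,2,1$. Hence for each $k<\mu$ exactly two diagonals have length $k$, and the rest have length $\mu$. Since $a\le\mu$, the diagonals of length less than $a$ are exactly the two of each length $1,\dots,a-1$. These contribute $2\sum_{k=1}^{a-1}k=a(a-1)$ in total. The remaining $(n+m-1)-2(a-1)$ diagonals each contribute exactly $a$. Adding up,
$$|S|\le a(a-1)+a(n+m-2a+1)=a(n+m-a),$$
which is the bound claimed in Lemma~\ref{lem:strongposet}.

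The only step needing care is the count of diagonal lengths, in particular the boundary case $a=\mu$ and the case $n\neq m$. There the middle plateau of length-$\mu$ diagonals has $|n-m|+1$ members, and I will check that it is counted correctly and that the "two of each length $k<a$" claim still holds. Everything else is immediate, since the chain partition avoids any appeal to Mirsky's or Dilworth's theorem.
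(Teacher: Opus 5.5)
Your proof is correct and complete. The step you flagged works out. One has $|D_c|=\min(n,m,n+c,m-c)$, so for each $k<\min(n,m)$ the diagonals of length $k$ are exactly $D_{k-n}$ and $D_{m-k}$. These are distinct because $2k<n+m$. The remaining $|n-m|+1$ diagonals have length $\min(n,m)\ge a$, and so $\sum_c \min(a,|D_c|)=a(a-1)+a(n+m-2a+1)=a(n+m-a)$, including in the boundary case $a=\min(n,m)$.

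The paper does not prove this lemma; it imports it from Davies without proof, so there is no in-paper argument to compare yours against. Your diagonal chain-partition count is a self-contained justification.
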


As noted in the introduction, if Davies' Lemma 4.3 \cite{davies} worked in our setting, we would be done; but in our case, adjacencies also depend on the order $<_H$, and therefore we need a modified approach. We will need one structural result that is a direct corollary of how the graph $H$ is defined (see Figure \ref{fig:28}). 
\begin{figure}
    \centering
    \includegraphics[width=0.5\linewidth]{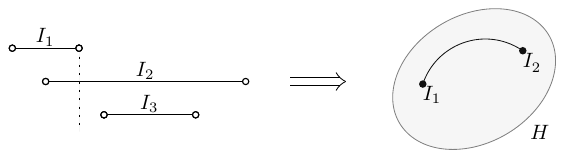}
    \caption{Lemma \ref{lem:nothree}.}
    \label{fig:28}
\end{figure}

\begin{lemma}\label{lem:nothree}
    Let $H$ be a graph and $\intI_H$ its interval system as defined at the beginning of this section. Suppose that there exist three intervals $I_1,I_2,I_3 \in \intI_H$ such that $L(I_1) < L(I_2) < R(I_1) < L(I_3) < R(I_3) < R(I_2)$. Then $I_1$ is adjacent to $I_2$ in $H$.
\end{lemma}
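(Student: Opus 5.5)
We prove the lemma directly from the definition of a crossing graph, using conditions (i) and (ii) of Definition~\ref{def:H} to pin down the relative $<_H$-order of the three intervals. By hypothesis, $L(I_1) < L(I_2) < R(I_1) < R(I_2)$, so condition (b) of the definition already holds for the pair $(I_1, I_2)$. It therefore suffices to show $I_1 <_H I_2$, which is condition (a).

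First compare $I_1$ and $I_3$. We have $L(I_1) < L(I_3)$, and since $R(I_1) < L(I_3)$, property (i) gives $I_1 <_H I_3$.

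Next compare $I_2$ and $I_3$. We have $L(I_2) < R(I_1) < L(I_3)$, so $L(I_2) < L(I_3)$. Moreover $R(I_2) > R(I_3)$, so property (ii), applied with $I = I_2$ and $J = I_3$, gives $I_2 >_H I_3$, that is, $I_3 <_H I_2$.

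Since $<_H$ is a total order, transitivity yields $I_1 <_H I_3 <_H I_2$, so $I_1 <_H I_2$. Together with the overlap condition (b), this shows that $I_1 I_2 \in E(H)$.

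There is no real obstacle here. The only care needed is to check the hypotheses of (i) and (ii): both require $L(I) < L(J)$ for the pair in question. For $(I_1, I_3)$ this follows from $L(I_1) < R(I_1) < L(I_3)$, and for $(I_2, I_3)$ it follows from $L(I_2) < R(I_1) < L(I_3)$.
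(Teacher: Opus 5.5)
Your proof is correct and follows the paper's argument: you use (i) to get $I_1 <_H I_3$, use (ii) to get $I_2 >_H I_3$, conclude $I_1 <_H I_2$ by transitivity, and combine this with the overlap condition (b). Your explicit check that $L(I) < L(J)$ holds for each pair before invoking (i) and (ii) is a detail the paper leaves implicit.
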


\begin{proof}
    By Definition \ref{def:H}, we have $I_2 >_H I_3$ and $I_1 <_H I_3$. Together, these imply that $I_1 <_H I_2$, so since also $L(I_1) < L(I_{2}) <R(I_1) < R(I_2)$, we have $I_1$ adjacent to $I_2$ in $H$.
\end{proof}

Now we are ready for the main result of this section.

\begin{theorem}\label{thm:mainresult}

Let $H$ be a crossing graph with corresponding interval system $\intI_H$ and total order $<_H$. Let $\mathcal{P}$ be a pillar system for $\intI_H$ with the following properties:
\begin{enumerate}
    \item[(a)] $\chi^*(\mathcal{P}) \leq 15\omega(H)^2+2\omega(H)\log_2(\omega(H))+5\omega(H)$.
    \item[(b)] Each arch of $\mathcal{P}$ contains at least one interval of $\intI_H$.
    \item[(c)] Each arch of $\mathcal{P}$ that contains a pair of disjoint intervals of $\intI_H$ has degree no greater than $15\omega(H)^2$.
\end{enumerate}

Suppose that there exists an arch $K$ of $\mathcal{P}$ containing at least one pair of disjoint intervals of $\intI_H$.

Then it is possible to extend $\mathcal{P}$ via pillars placed in $K$ to a pillar system $\mathcal{P}'$ so that $\mathcal{P}'$ satisfies the above three conditions \textbf{and} there exist two disjoint intervals  which are contained in the same arch of $\mathcal{P}$, but are not contained in the same arch of $\mathcal{P}'$. 
\end{theorem}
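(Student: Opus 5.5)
We plan to adapt Davies' inductive step, choosing new pillars inside $K$ so that the two conditions on $\chi^*$ and on arch degrees survive while strictly separating some pair of disjoint intervals. Let $\omega=\omega(H)$ and write $K=(a,b)$. Since $K$ contains two disjoint intervals of $\intI_H$, there is a point $q\in K$ separating them. Placing a pillar at $q$ already separates them, so the whole difficulty lies in preserving (a) and (c), and (b), which is only cosmetic. For (b) we only ever place pillars at points lying inside some interval of $\intI_H$. If an arch would be left empty, we move the pillar slightly or merge the arches, since an arch containing no interval imposes no constraints.

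\textbf{Step 1 (choosing $Q$).} We sweep $K$ from left to right and look at the colours of $\phi_{\mathcal{P}}$ on intervals in $\intI_H(K)$, that is, intervals with exactly one endpoint in $K$. For each colour $c$ counted in $\deg_{\mathcal P}(K)$ we record the positions of endpoints in $K$ of intervals of colour $c$. We want a set $Q$ of at most $O(\omega)$ pillars, placed in $K$ and including a separating point $q$, such that every resulting subarch $J$ of $K\setminus Q$ that still contains two disjoint intervals has $\deg_{\mathcal P}(J)$ substantially smaller than $15\omega^2$. Concretely, we aim for $\deg_{\mathcal P}(J)\le 15\omega^2-\omega\lceil\log_2(|Q|+1)\rceil$. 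Lemma \ref{lem:inductpillars} then gives $\deg_{\mathcal P^*}(J)\le 15\omega^2$, which is (c). It also gives $\chi^*(\mathcal P^*)\le\max(\chi^*(\mathcal P),15\omega^2+\omega\lceil\log_2(|Q|+1)\rceil)$, which is within the bound in (a) when $|Q|\le O(\omega^2)$, because $\omega\log_2(\omega^2)=2\omega\log_2\omega$. The extra slack $5\omega$ in (a) absorbs rounding.

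\textbf{Step 2 (the Turán-type bound; main obstacle).} We must show that, because $K$ has degree at most $15\omega^2$ and $H$ has clique number $\omega$, the colours cannot be spread so densely that every way of cutting leaves a heavy piece containing disjoint intervals. We would argue by contradiction. Suppose every subarch still containing two disjoint intervals has large degree. Using Lemma \ref{lem:pillarcontinuity}, we group the intervals of $\intI_H(K)$ by pillar; their outer endpoints lie in disjoint blocks $K_{p_i}$. Within each pillar group the colouring comes from one application of Lemma \ref{lem:compgraph}, so it uses at most $\omega$ colours. We index the pillars crossing into the left and right parts of $K$ and form a set $S$ in a strong dominance grid $[n]\times[m]$: each element records a pair consisting of the position of a pillar and the position of a colour-change point in $K$. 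A chain of length greater than $a\approx 2\omega$ in $P_{n,m}$ would yield nested or overlapping intervals assigned to different pillars. Davies obtains a clique from these directly; we cannot, because adjacency also needs $<_H$.

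This is where Lemma \ref{lem:nothree} replaces his argument. Because the heavy subarch contains a pair of disjoint intervals of $\intI_H$, we can insert an interval $I_3$ strictly between $R(I_1)$ and $R(I_2)$. The lemma then forces the overlapping pairs in the chain to be edges of $H$. From a long chain we expect to extract $\omega+1$ pairwise overlapping intervals, each consecutive pair witnessed by such an $I_3$, and hence a clique. This is exactly why we only require the degree bound in (c) for arches containing disjoint intervals. Lemma \ref{lem:strongposet} then bounds $|S|\le a(n+m-a)$, which caps the total degree and contradicts the assumed heaviness once the constants are tuned to $15\omega^2$.

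\textbf{Step 3.} Finally we add the pillars of $Q$ in the order given by Lemma \ref{lem:inductpillars} and check (a)–(c) for the new arches. Arches outside $K$ are unchanged. The chosen $q$ separates the originally chosen disjoint pair. We expect Step 2, in particular getting the $<_H$-compatibility from Lemma \ref{lem:nothree} across different pillars, to be the genuine difficulty; the constants $15$ and $5$ would be set only at the end.
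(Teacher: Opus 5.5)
Your framework matches the paper's: Lemma \ref{lem:pillarcontinuity}, a strong dominance grid with Lemma \ref{lem:strongposet}, Lemma \ref{lem:nothree} in place of Davies' clique step, then Lemma \ref{lem:inductpillars}. However, the step you call the main obstacle is left open, and your way of supplying $I_3$ does not work.

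Two chain intervals $I_i, I_j$ have their endpoints in $K$ in different cells of your grid. Lemma \ref{lem:nothree} needs a whole interval of $\intI_H$ strictly between those two endpoints. Knowing that some heavy subarch contains two disjoint intervals does not put one there, and neither does cutting at colour-change points.

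The missing idea is to build the subdivision of $K$ so that every piece contains an interval of $\intI_H$. The paper sweeps left to right and places $q_i$ at the leftmost $x$ such that $\deg_{\mathcal P}((q_{i-1},x))\ge 10\omega^2$ \emph{and} some interval of $\intI_H$ lies inside $(q_{i-1},x)$. The grid is then (old pillar, new piece $J_b$). One passes to every other element of a chain, so that the entire piece $J_{a_{i+1}}$, together with its interval, sits between $R(I_i)$ and $R(I_{i+2})$. This is why the forbidden chain length is $4\omega+1$ rather than about $2\omega$.

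Double counting then gives $10n\omega^2\le s\le\omega|S'|\le 4\omega^2(n+15\omega^2)$. This bounds the number $n$ of new pillars by $O(\omega^2)$, which is what keeps $\omega\lceil\log_2(|Q|+1)\rceil$ within the slack. Your Step 2 never produces such a bound on $|Q|$, only an unspecified contradiction with ``heaviness''.

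Second, (b) is not cosmetic. For chain intervals whose other endpoint lies to the right of $K$, the interval $I_3$ must lie outside $K$, between two old pillars $p_{a_{i+1}}<p_{a_{i+2}}$. It is supplied exactly by property (b) of $\mathcal P$. So $\mathcal P'$ must genuinely satisfy (b) for the next round.

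Your mechanism does not give (b). A pillar placed at a point inside an interval of $\intI_H$ removes that interval from every arch rather than ensuring each new arch contains one. For example, if $K$ contains just two disjoint intervals, a pillar inside the left one leaves the arch to its left empty. Deleting pillars to merge arches can violate (c).

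In the paper, the sweep's second condition already yields (b) for every new arch except possibly $(q_n,R(K))$. That arch is repaired by deleting $q_n$ or sliding it left to a carefully chosen left endpoint (Claims \ref{claim:propetybpart1} and \ref{claim:propertybpart2}). Two degenerate cases also need separate treatment:
\begin{itemize}
\item when Lemma \ref{lem:strongposet} does not apply;
\item when no pillar gets placed because $\deg_{\mathcal P}(K)$ is small, in which case a single pillar between the disjoint pair suffices.
\end{itemize}
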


The key difference between this and Davies' Theorem 5.1 is that in our construction, we guarantee that there is at least one interval between each pair of consecutive pillars, so that we can take advantage of Lemma \ref{lem:nothree} (where these intervals will play the role of $I_3$). In exchange, our bound is slightly worse than that of Davies, but we still easily retain polynomial $\vec{\chi}$-boundedness. The basic proof technique will still be similar, but we stop when for each arch, all intervals contained within it have a point in common.

\begin{proof}[Proof of Theorem \ref{thm:mainresult}]
    We perform the following algorithm until it terminates: at the $i^{th}$ step, let $q_{i-1}$ be the most recently placed pillar (with $q_0 = L(K)$). Find the leftmost point $x$ in $K$ (if it exists) such that \textbf{both}
    \begin{itemize}
        \item $\deg_{\mathcal{P}}((q_{i-1},x)) \geq 10\omega(H)^2$ \textbf{and}
        \item There is at least one interval of $\intI_H$ entirely contained in $(q_{i-1},x)$.
    \end{itemize}
    If such an $x$ is found, set $q_i = x$ and continue. Otherwise, we are finished placing new pillars in $K$. Let $q_n$ be the last pillar placed. Note that by construction, each interval $(q_{i-1},q_i)$ for $i \in\{ 1,\dots,n\}$ either has degree exactly equal to $10\omega(H)^2$, or contains exactly one interval of $\intI_H$. Furthermore, the interval $(q_n,r(K))$ either has degree less than $10\omega(H)^2$, or contains no intervals of $\intI_H$.

     Let $J_i = (q_{i-1},q_i)$ for $i \in \{1, \dots, n-1\}$, and let $J_n = (q_{n-1}, R(K))$. Let $s = \sum_{i=1}^{n} \deg_{\mathcal{P}}(J_i)$. By construction it is immediate that $s \geq 10n\omega(H)^2$. Note that $\deg_{\mathcal{P}}(J_n) \geq \deg_{\mathcal{P}}((q_{n-1}, q_n))$ since $(q_{n-1}, q_n) \subseteq J_n$. 
     
      For each pillar $p \in \mathcal{P}$, letting $K_p$ be the open interval of $(0,1) \backslash K$ corresponding to the pillar $p$ as defined by Lemma \ref{lem:pillarcontinuity}, we may give a total order $<_N$ to the pillars of $\mathcal{P}$ as in \cite{davies} by defining $p_1 <_N p_2$ if and only if either $p_1$ and $p_2$ are on the same side of $K$ and $R(K_{p_1}) < L(K_{p_2})$, or $R(K_{p_2}) < L(K) < R(K) < L(K_{p_1})$.

    Let $\mathcal{J} = \{J_1, \dots, J_n\}$, and define $\mathcal{K}$ to be the set of all pillars in $\mathcal{P}$ that have been assigned an interval of $\intI_H$ with at least one endpoint in $K$. We write $\mathcal{K} = \{p_1, \dots, p_m\}$, listed in order with respect to $<_N$. Let $S' \subseteq [n] \times [m]$  consist of those ordered pairs $(a,b)$ such that there is an interval of $\intI_H$ assigned to $p_a$, with one endpoint in $K_{p_a}$ and the other endpoint in $J_b$. Note that since each pillar has at most $\omega(H)$ distinct colours among intervals assigned to it, we see that $\omega(H)|S'| \geq s$. Now, we proceed with the first substantial step. 

    \begin{claim}\label{claim:4omega+1}

    $S'$ does not contain the elements of a chain of size $4\omega(H)+1$ from the strong dominance poset $P_{n,m}$ with vertex set $[n] \times [m]$.
        
    \end{claim}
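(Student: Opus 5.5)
The plan is to argue by contradiction. Suppose $S'$ contains a chain $(a_1,b_1) <_{P_{n,m}} \dots <_{P_{n,m}} (a_k,b_k)$ with $k = 4\omega(H)+1$, so that both coordinates are strictly increasing. (The statement writes $[n]\times[m]$, but the first coordinate indexes the pillars $p_a \in \mathcal{K}$ and the second indexes the arcs $J_b \in \mathcal{J}$; only strict monotonicity in both coordinates matters.) For each $t$, pick a witness interval $I_t \in \intI_H$ assigned to $p_{a_t}$, with one endpoint in $K_{p_{a_t}}$ and the other in $J_{b_t}$. The pillars $p_{a_t}$ lie on the two sides of $K$. By pigeonhole, at least $2\omega(H)+1$ of them lie on the same side, and I restrict to that subchain. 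From the subchain I then keep every other element, which leaves $\omega(H)+1$ intervals. Consecutive kept intervals have their $K$-endpoints separated by at least one full arc $J_b$ with $b<n$, and their outer endpoints separated by at least one full intermediate $K_{p}$. The goal is to show that these $\omega(H)+1$ intervals are pairwise adjacent in $H$, which contradicts the definition of $\omega(H)$.

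The first step is to show that any two kept intervals $I, I'$ with $I$ earlier in the chain overlap, that is, $L(I)<L(I')<R(I)<R(I')$. This should follow directly from the definition of $<_N$. On a fixed side, $<_N$ orders the disjoint intervals $K_p$ from left to right. Hence both the outer endpoints and the $K$-endpoints of the chain intervals increase from left to right. Since all $K$-endpoints lie on one side of all outer endpoints, the two endpoint sequences interleave exactly as in condition (b) of Definition~\ref{def:H}. It then remains to establish $I <_H I'$, and for this I would use the mechanism of Lemma~\ref{lem:nothree}. If some $I_3 \in \intI_H$ satisfies $R(I) < L(I_3) < R(I_3) < R(I')$, then (i) of Definition~\ref{def:H} gives $I <_H I_3$, and (ii) gives $I_3 <_H I'$. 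So the task reduces to finding such an $I_3$ strictly between the right endpoints of $I$ and $I'$.

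Suppose first that the side is the left side of $K$. Then the right endpoints of the kept intervals lie in $K$, in arcs $J_b$ and $J_{b'}$ with $b' \ge b+2$. The arc $J_{b+1}=(q_b,q_{b+1})$ lies strictly between these two right endpoints. By the pillar-placing rule, $J_{b+1}$ contains an interval of $\intI_H$, and this interval serves as $I_3$. Note that it is essential here that $b+1 \le n-1$, so that $J_{b+1}$ really is one of the arcs between consecutive placed pillars; this holds because $b' \le n$.

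Now suppose the side is the right side of $K$. Then the right endpoints lie outside $K$, in $K_{p_a}$ and $K_{p_{a'}}$, and between them lies the whole region $K_{p_{a''}}$ of the skipped chain element. The plan is to show that the segment between $R(I)$ and $R(I')$ contains at least two pillars of $\mathcal{P}$, and hence an entire arch of $\mathcal{P}$. By property (b) of Theorem~\ref{thm:mainresult}, that arch contains an interval of $\intI_H$, which serves as $I_3$. To get the two pillars I would use the skipped pillar $p_{a''}$ together with pillars that witness the separation of the disjoint regions $K_{p}$, following the construction in Lemma~\ref{lem:pillarcontinuity}. I expect this right-side case to be the main obstacle. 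In particular, I need to verify from the structure of assignments, namely that $I'$ contains $p_{a'}$ but no earlier pillar, that such an arch really sits strictly inside $(R(I),R(I'))$. If this fails, the fallback is to skip more elements, at the cost of a worse constant than $4\omega(H)+1$.
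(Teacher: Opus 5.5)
Your overall structure is the paper's: pigeonhole to $2\omega(H)+1$ witnesses on one side of $K$, keep every other one, and for each kept pair find some $I_3\in\intI_H$ between the right endpoints so that (i) and (ii) of Definition~\ref{def:H} force $I<_H I'$. Your left-side case is complete and matches the paper, since the arc $J_{b+1}$ contains an interval by the placement rule.

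The right-side case, however, is not proved, and the route you sketch would not prove it. Lemma~\ref{lem:pillarcontinuity} only says the regions $K_p$ are pairwise disjoint; it says nothing about pillars lying between them. You also have not shown that the skipped pillar $p_{a''}$ lies to the right of $R(I)$. A priori you only know that each chain pillar lies in $[R(K),R(I_t))$ for its own witness $I_t$, so all of these pillars could sit left of $R(I)$. For the same reason, skipping more elements does not help. You name the right ingredient, that $I'$ contains no pillar placed before $p_{a'}$, but you do not carry it out.

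Here is the missing step. Take right-side witnesses $I_s,I_t$ with $s<t$ in the chain, so $L(I_s),L(I_t)\in K$ and $R(I_s)<R(I_t)$.

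Since $p_{a_s}\in I_s$ and $K$ contains no pillar, $R(K)\le p_{a_s}<R(I_s)$. Hence $L(I_t)<p_{a_s}<R(I_t)$, so $I_t$ contains $p_{a_s}$.

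As $I_t$ is assigned to $p_{a_t}\neq p_{a_s}$, it contains no pillar placed before $p_{a_t}$. So $p_{a_t}$ was placed before $p_{a_s}$. Therefore $I_s$, being assigned to $p_{a_s}$, does not contain $p_{a_t}$. Since also $p_{a_t}\ge R(K)>L(I_s)$, this gives $R(I_s)\le p_{a_t}<R(I_t)$.

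Applying this to the pairs $(i,i+1)$ and $(i+1,i+2)$ gives $R(I_i)\le p_{a_{i+1}}<R(I_{i+1})\le p_{a_{i+2}}<R(I_{i+2})$. So the two pillars you need are the skipped pillar $p_{a_{i+1}}$ and the pillar $p_{a_{i+2}}$ of the kept interval $I_{i+2}$ itself.

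The arch of $\mathcal{P}$ immediately to the right of $p_{a_{i+1}}$ lies inside $(p_{a_{i+1}},p_{a_{i+2}})\subseteq[R(I_i),R(I_{i+2}))$. By property (b) it contains an interval of $\intI_H$, which serves as $I_3$; the inequality $R(I_i)<L(I_3)$ is strict because endpoints in an interval system are distinct. This is exactly the fact the paper uses when it asserts that $p_{a_j}$ is placed before $p_{a_i}$ for $j>i$. With it, your argument goes through with the constant $4\omega(H)+1$.
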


    \begin{claimproof}[Proof of Claim~\ref{claim:4omega+1}]

     Suppose otherwise, and let $C$ be such a chain. Let $\mathcal{I}_C$  be a set of intervals so that each interval in $\mathcal{I}_C$ corresponds to exactly one element of $C$. Then either there are at least $2\omega(H)+1$ intervals of $\mathcal{I}_C$ with one endpoint to the left of $K$, or at least that many intervals with one endpoint to the right of $K$. 

    Suppose first we have $2\omega(H)+1$ elements of $\mathcal{I}_C$ with one endpoint to the left of $K$. Let these be labelled $I_1,\dots,I_{2\omega(H)+1}$ so that $L(I_1) < L(I_2) < \dots < L(I_{2\omega(H)+1})$ and $R(I_1) < R(I_2) < \dots < R(I_{2\omega(H)+1})$. Furthermore, let $a_1 < \dots < a_{2\omega(H)+1}$ be positive integers so that $R(I_i) \in J_{a_i}$ for each $i$. For each $i \in [2\omega(H)-1]$, note that $R(I_i) < L(J_{a_{i+1}}) <  R(J_{a_{i+1}}) < R(I_{i+2})$, and that by construction $J_{a_{i+1}}$ contains at least one interval $I'$ of $\intI_H$. Therefore, $I_i, I_{i+2}, I'$ satisfy the conditions of Lemma~\ref{lem:nothree}, implying that $I_i$ is adjacent to $I_{i+2}$ in $H$, also implying that $I_i <_H I_{i+2}$. Since this holds for every $i \in [2\omega(H)-1]$, we have that $I_1 <_H I_3 <_H \dots <_H I_{2\omega(H)+1}$, and all of these intervals are pairwise overlapping, giving a clique of size $\omega(H)+1$ in $H$, a contradiction.

    Now, suppose instead that we have $2\omega(H)+1$ elements of $\mathcal{I}_C$ with one endpoint to the right of $K$. As above, let these be labelled $I_1,\dots,I_{2\omega(H)+1}$ so that $L(I_1) < L(I_2) < \dots < L(I_{2\omega(H)+1})$ and $R(I_1) < R(I_2) < \dots < R(I_{2\omega(H)+1})$, and let $a_1 < \dots < a_{2\omega(H)+1}$ be positive integers so that $R(I_i) \in K_{p_{a_i}}$ for each $i$. Note that for $i \in [2\omega(H)+1]$, we have that necessarily $p_{a_i} \in I_i$, and $p_{a_j} \notin I_i$ for all $j > i$ (since $p_{a_j}$ is placed before $p_{a_i}$). Therefore, for each $i \in [2\omega(H)-1]$, we have $R(I_i) < p_{a_{i+1}} < p_{a_{i+2}} < R(I_{i+2})$. Furthermore, the pillars of $\mathcal{P}$ are placed in such a way that each arch contains at least one interval of $\intI_H$, so certainly there is at least one interval $I'$ between $p_{a_{i+1}}$ and $p_{a_{i+2}}$. Then $I_i, I_{i+2}, I'$ satisfy the conditions of Lemma~\ref{lem:nothree}, and as above we find a clique in $H$ of size $\omega(H)+1$, a contradiction.
    \end{claimproof}

    So, the set $S'$ defined above does not contain a chain of size $4\omega(H)+1$. We wish to apply Lemma~\ref{lem:strongposet}, but we should be careful about whether the conditions of the lemma hold. We start first by assuming so.

    \begin{claim}\label{claim:lemmaholds}

    Suppose that the conditions to apply Lemma~\ref{lem:strongposet} to $S'$ and $P_{n,m}$ are met. Then $n \leq 10\omega(H)^2$.
        
    \end{claim}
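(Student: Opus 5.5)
Assuming the hypotheses of Lemma~\ref{lem:strongposet} hold with $a = 4\omega(H)$, I would compare a lower bound on $|S'|$ coming from the degrees with the upper bound from the lemma. By Claim~\ref{claim:4omega+1}, $S'$ contains no chain of size $4\omega(H)+1$, so Lemma~\ref{lem:strongposet} gives
$$|S'| \leq 4\omega(H)\,(n+m-4\omega(H)) < 4\omega(H)(n+m).$$
On the other side, we already have $\omega(H)|S'| \geq s \geq 10n\omega(H)^2$, hence $|S'| \geq 10n\omega(H)$. Combining,
$$10 n\omega(H) < 4\omega(H)(n+m), \qquad\text{so}\qquad 6n < 4m,$$
and thus $n < \tfrac{2}{3}m$.

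Next I need an upper bound on $m$. Every pillar of $\mathcal{K}$ has an assigned interval with an endpoint in $K$, and these assigned intervals all contribute colours to $\deg_{\mathcal{P}}(K)$. Pillar systems are built so that intervals assigned to different pillars whose endpoints lie in a common arch can receive the same colour only in limited ways: $\phi_{p}$ avoids the colours used on intervals with an endpoint in $F_p$. I would argue that intervals of $\intI_H(K)$ assigned to distinct pillars get distinct colours. The point is that $K \subseteq F_p$ for whichever of the two pillars was placed later, so the later pillar avoided the earlier one's colours. It follows that $m \leq \deg_{\mathcal{P}}(K) \leq 15\omega(H)^2$ by hypothesis (c), since $K$ contains a pair of disjoint intervals. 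Therefore $n < \tfrac{2}{3}\cdot 15\omega(H)^2 = 10\omega(H)^2$, which is the claim.

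The main obstacle is this step $m \leq \deg_{\mathcal{P}}(K)$. I must check that the later pillar $p$ really has $F_p \supseteq K$ and that the earlier interval has an endpoint in $K \subseteq F_p$, so that its colour was excluded from $C$. This holds because $K$ contains no pillars at all, and $F_p$ is a maximal pillar-free interval at the time $p$ is placed. If this distinctness argument fails, I would fall back on a cruder bound such as $m \leq \omega(H)\deg_{\mathcal{P}}(K)$ and adjust the arithmetic, though that would not give exactly $10\omega(H)^2$. The slack between $n+m-4\omega(H)$ and $n+m$ also gives some room if the constants are tight.
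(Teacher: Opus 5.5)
Your proposal is correct and follows the paper's argument. You apply Lemma~\ref{lem:strongposet} with $a=4\omega(H)$, combine it with $\omega(H)|S'|\ge s\ge 10n\omega(H)^2$, and use $m\le\deg_{\mathcal{P}}(K)\le 15\omega(H)^2$ from (c); the only difference is that you isolate $n<\tfrac{2}{3}m$ before substituting the bound on $m$.

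Your justification of $m\le\deg_{\mathcal{P}}(K)$ is more careful than the paper's one-line remark. To make the inclusion $K\subseteq F_p$ airtight, state the reason explicitly: an interval assigned to $p$ with an endpoint in $K$, together with $K$, forms an open interval containing $p$ and no earlier pillar. The fact that $K$ is pillar-free is not enough on its own.
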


    \begin{claimproof}[Proof of Claim~\ref{claim:lemmaholds}]
    
    Noting that $m \leq \deg_{\mathcal{P}}(K)$ since each pillar in $\mathcal{K}$ contributes at least $1$ to $\deg_{\mathcal{P}}(K)$, and using the third assumption in the theorem, it follows that  \begin{align*}
        |S'| &\leq 4\omega(H)(n+m-4\omega(H))\\
         &\leq 4\omega(H)(n+\deg_{\mathcal{P}}(K)-4\omega(H)) \\
         &\leq 4\omega(H)(n+15\omega(H)^2-4\omega(H)).
    \end{align*}  \\  Therefore we have, using that each interval has degree at least $10\omega(H)^2$ by construction,
    \begin{align*}
    10n\omega(H)^2 &\leq S\\ &\leq \omega(H)|S'| \\ &\leq \omega(H)4\omega(H)(n+15\omega(H)^2-4\omega(H)) \\ &\leq 4\omega(H)^2(n+15\omega(H)^2) \\&= 4n\omega(H)^2+60\omega(H)^4
    \end{align*}
    from which we see that $n \leq 10\omega(H)^2$. 
    \end{claimproof}

    We want the desired inequality to hold no matter what, so we also must consider the case when Lemma \ref{lem:strongposet} does not hold.

    \begin{claim}\label{claim:lemmadoesnotholdstep1}

    Suppose that the conditions to apply Lemma \ref{lem:strongposet} to $T$ and $P_{n,m}$ are not met. Then either $n \leq 15\omega(H)^2$ or $\deg_{\mathcal{P}}(K) \leq 4\omega(H)^2$. 
        
    \end{claim}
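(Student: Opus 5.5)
The conditions of Lemma~\ref{lem:strongposet} (applied with chain bound $a = 4\omega(H)$, which Claim~\ref{claim:4omega+1} justifies) are that $a$ is a positive integer with $a \leq \min(n,m)$. Since $4\omega(H) \geq 1$, the only way these conditions can fail is that $4\omega(H) > n$ or $4\omega(H) > m$. I will split into these two cases.

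\emph{Case 1: $n < 4\omega(H)$.} Then $n < 4\omega(H) \leq 15\omega(H)^2$ because $\omega(H) \geq 1$, so the first alternative holds. Nothing further is needed.

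\emph{Case 2: $m < 4\omega(H)$.} Here I aim for the second alternative, $\deg_{\mathcal{P}}(K) \leq 4\omega(H)^2$. By Definition~\ref{def:degree}, $\deg_{\mathcal{P}}(K)$ counts the colours of $\phi_{\mathcal{P}}$ used by intervals with an endpoint in $K$. Any such interval has been assigned to some pillar, and it cannot have both endpoints in $K$: an interval contained in $K$ contains no pillar, so it is unassigned and uncoloured. Hence each coloured interval with an endpoint in $K$ has exactly one endpoint in $K$. The pillar it is assigned to therefore lies in $\mathcal{K}$, by the definition of $\mathcal{K}$. Each $\phi_{p}$ uses at most $\omega(H)$ colours (Lemma~\ref{lem:compgraph} and the construction of $\phi_p$ from the set $C$). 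Summing over the $m$ pillars of $\mathcal{K}$ gives
\[
\deg_{\mathcal{P}}(K) \leq \omega(H)\, m < 4\omega(H)^2,
\]
which gives the second alternative.

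The only step requiring care is the observation that every colour counted in $\deg_{\mathcal{P}}(K)$ comes from a pillar in $\mathcal{K}$, that is, that no interval with both endpoints in the arch is coloured. This follows directly from the facts that arches contain no pillars and that colours are only given to intervals assigned to pillars, so I do not expect a real obstacle. Note that the claim refers to ``$T$'', which I read as the set $S'$.
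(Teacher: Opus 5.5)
Your proof is correct and follows the paper's argument. The hypotheses of Lemma~\ref{lem:strongposet} with $a=4\omega(H)$ fail only when $n$ or $m$ falls below $4\omega(H)$. The first case gives $n\le 15\omega(H)^2$ directly. The second gives $\deg_{\mathcal{P}}(K)\le \omega(H)\,m\le 4\omega(H)^2$, because each pillar contributes at most $\omega(H)$ colours.

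Your justification that every colour counted in $\deg_{\mathcal{P}}(K)$ comes from a pillar of $\mathcal{K}$ (no coloured interval lies inside the arch) is correct, as is your reading of $T$ as the typo for $S'$. Both only make explicit what the paper leaves implicit.
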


    \begin{claimproof}[Proof of Claim~\ref{claim:lemmadoesnotholdstep1}]

    The conditions of Lemma~\ref{lem:strongposet} are not met only if $4\omega(H)+1 > \min(n,m)$, meaning either $4\omega(H) \geq n$ or $4\omega \geq m$. In the former case we have $n \leq 4\omega(H) \leq 15\omega(H)^2$. In the latter case, since each pillar admits at most $\omega(H)$ different colours among its intervals, note that then $\deg_{\mathcal{P}}(K) \leq 4\omega(H)^2$.
    \end{claimproof}

    Now, we simply need to dispense with the simple case of $\deg_{\mathcal{P}}(K) \leq 4\omega(H)^2$.

    \begin{claim}\label{claim:lemmadoesnotholdstep2}

    Suppose that $\deg_{\mathcal{P}}(K) \leq 4\omega(H)^2$. Then Theorem \ref{thm:mainresult} holds.
        
    \end{claim}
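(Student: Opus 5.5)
The plan is to handle the case $\deg_{\mathcal{P}}(K) \leq 4\omega(H)^2$ directly, without the greedy sequence $q_1,\dots,q_n$. We place pillars in $K$ from scratch via Lemma~\ref{lem:inductpillars}, choosing them so that every new arch still contains an interval of $\intI_H$, and so that the degree of each new arch is controlled by the small starting degree $\deg_{\mathcal{P}}(K)$.

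\textbf{Step 1: choose the new pillars.} Since $K$ contains two disjoint intervals $I, I' \in \intI_H$ with $R(I) < L(I')$, choose a single point $q \in K$ with $R(I) < q < L(I')$. Put $Q = \{q\}$. Then:
\begin{itemize}
\item the two new arches $(L(K), q)$ and $(q, R(K))$ contain $I$ and $I'$ respectively, so condition (b) holds;
\item $I$ and $I'$ are separated into different arches, as required by the theorem;
\item all other arches of $\mathcal{P}$ are unchanged, so conditions (b) and (c) continue to hold for them.
\end{itemize}

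\textbf{Step 2: apply Lemma~\ref{lem:inductpillars} with $|Q| = 1$.} This gives
\[
\chi^*(\mathcal{P}') \leq \max\bigl(\chi^*(\mathcal{P}),\, \deg_{\mathcal{P}}(K) + \omega(H)\bigr) \leq \max\bigl(\chi^*(\mathcal{P}),\, 4\omega(H)^2 + \omega(H)\bigr).
\]
This is within the bound in (a). For each new arch $J \subseteq K$ we also get
\[
\deg_{\mathcal{P}'}(J) \leq \deg_{\mathcal{P}}(J) + \omega(H) \leq \deg_{\mathcal{P}}(K) + \omega(H) \leq 4\omega(H)^2 + \omega(H) \leq 15\omega(H)^2,
\]
using that the degree is monotone under inclusion of intervals. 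So (c) holds for the new arches regardless of whether they contain disjoint pairs.

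\textbf{Expected obstacle.} The only delicate point is checking that Lemma~\ref{lem:inductpillars} applies with degrees measured for the open intervals $J$ of $K \setminus Q$, which are exactly the new arches. One must also check that intervals of $\intI_H$ containing $q$ become assigned to $q$ and coloured with at most $\omega(H)$ new colours, which is consistent with Lemma~\ref{lem:compgraph}. I expect this to be routine, since it mirrors how Lemma~\ref{lem:inductpillars} is stated.
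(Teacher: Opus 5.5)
Your proposal is correct and is essentially the paper's argument: place a single pillar between two disjoint intervals of $K$, then use Lemma~\ref{lem:inductpillars} with $|Q|=1$ and $\deg_{\mathcal{P}}(K)\le 4\omega(H)^2$ to bound $\chi^*(\mathcal{P}')$ by $\max(\chi^*(\mathcal{P}),4\omega(H)^2+\omega(H))$ and each new arch's degree by $4\omega(H)^2+\omega(H)\le 15\omega(H)^2$. You also check condition (b) and the separation requirement explicitly, which the paper leaves implicit, and your colour bound is slightly tighter than the paper's $10\omega(H)^2+\omega(H)$, though either suffices for (a).
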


    \begin{claimproof}[Proof of Claim~\ref{claim:lemmadoesnotholdstep2}]
We form a pillar system $\mathcal{P}'$ extending $\mathcal{P}$ by placing an arbitrary new pillar between two disjoint intervals in $K$, and noting that all desired properties hold, since $\chi^*(\mathcal{P}') \leq \max(\chi^*(\mathcal{P}),10\omega(H)^2+\omega(H)) \leq 15\omega(H)^2+2\omega(H)\log_2(\omega(H))+5\omega(H)$, and both newly formed arches have degree at most $4\omega(H)^2 + \omega(H)\leq 5\omega(H)^2$.    
    \end{claimproof}

    Given the previous three claims, we may assume from now on that $n \leq 15\omega(H)^2$.

    \begin{claim}\label{claim:propertyac}

    There exists a pillar system $\mathcal{P}'$ extending $\mathcal{P}$ via the new pillars $q_1, \dots, q_n$ that satisfies properties (a) and (c) of the statement of Theorem \ref{thm:mainresult}.
        
    \end{claim}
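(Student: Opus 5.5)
We will obtain $\mathcal{P}'$ by applying Lemma~\ref{lem:inductpillars} to the arch $K$ of $\mathcal{P}$ with $Q = \{q_1,\dots,q_n\}$, added in the order the lemma specifies; the positions of the pillars are exactly the $q_i$, and only their order may change. Write $\omega = \omega(H)$. Since $n \leq 15\omega^2$, we have
\[
\lceil \log_2(n+1)\rceil \leq \log_2(15\omega^2+1)+1 \leq 2\log_2(\omega)+5,
\]
because $15\omega^2+1 \leq 16\omega^2$ for $\omega\ge 1$. So every increase the lemma allows is at most $\omega\lceil\log_2(n+1)\rceil \leq 2\omega\log_2(\omega)+5\omega$.

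\emph{Property (a).} The first bullet of Lemma~\ref{lem:inductpillars} gives
\[
\chi^*(\mathcal{P}') \leq \max\bigl(\chi^*(\mathcal{P}),\, \deg_{\mathcal{P}}(K) + 2\omega\log_2(\omega)+5\omega\bigr).
\]
Here $K$ contains a pair of disjoint intervals, so hypothesis (c) gives $\deg_{\mathcal{P}}(K)\le 15\omega^2$. Hypothesis (a) bounds $\chi^*(\mathcal{P})$. Hence the maximum is at most $15\omega^2+2\omega\log_2(\omega)+5\omega$, which is (a) for $\mathcal{P}'$.

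\emph{Property (c).} The arches of $\mathcal{P}'$ lying outside $K$ are unchanged arches of $\mathcal{P}$, and colours assigned inside $K$ only touch intervals with endpoints in $K$, so their degrees do not change. The new arches are $(q_{i-1},q_i)$ for $1\le i\le n$, with $q_0=L(K)$, together with $(q_n,R(K))$. Each of these is an interval of $K\setminus Q$, so the second bullet gives
\[
\deg_{\mathcal{P}'}(J) \le \deg_{\mathcal{P}}(J) + 2\omega\log_2(\omega)+5\omega .
\]
It therefore suffices to show $\deg_{\mathcal{P}}(J)\le 10\omega^2 + O(\omega)$ for each new arch, and then use $2\omega\log_2\omega+5\omega \le 5\omega^2-O(\omega)$, which holds apart from small $\omega$ that are checked by hand.

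For an arch $(q_{i-1},q_i)$, the point $q_i$ was chosen as the leftmost point satisfying both conditions of the algorithm. As $x$ moves right, $\deg_{\mathcal{P}}((q_{i-1},x))$ increases only when $x$ passes an endpoint of a coloured interval. That interval is assigned to a single pillar, which contributes at most $\omega$ colours, so crossing one endpoint raises the degree by at most one colour. Hence at the first point where both conditions hold, the degree is at most $10\omega^2$, or the degree condition was met earlier and $q_i$ is where the contained-interval condition first becomes true. In that second case the degree can exceed $10\omega^2$, and this is the step I expect to be the main obstacle. I would handle it by noting that this arch contains exactly one interval of $\intI_H$, which is not a pair of disjoint intervals. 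Then (c) imposes no requirement on it, and only (a) is needed, which already holds. The last arch $(q_n,R(K))$ either has degree below $10\omega^2$, and is handled as above, or contains no interval of $\intI_H$, in which case (c) again imposes nothing.
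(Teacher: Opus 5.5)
Your proposal is correct and follows the paper's proof. You apply Lemma~\ref{lem:inductpillars} with $Q=\{q_1,\dots,q_n\}$, use $n\le 15\omega^2$ to bound the logarithmic term, and obtain (a) from hypotheses (a) and (c) on $K$. You obtain (c) because each new arch either has $\deg_{\mathcal{P}}\le 10\omega^2$ or contains at most one interval of $\intI_H$; the paper leaves this case analysis, and the fact that old arches keep their degree, implicit, and you spell them out.

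Two small touch-ups. First, the degree rises by at most one per endpoint crossed because each interval carries a single colour and no two intervals share an endpoint, not because a pillar contributes at most $\omega$ colours. Second, you need exactly $\deg_{\mathcal{P}}\le 10\omega^2$ rather than $10\omega^2+O(\omega)$, since $10\omega^2+2\omega\log_2\omega+5\omega\le 15\omega^2$ is tight at $\omega=1$. Your case analysis does deliver that bound, so no hand-checking of small $\omega$ is needed.
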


    \begin{claimproof}[Proof of Claim~\ref{claim:propertyac}]
    
    We may apply Lemma \ref{lem:inductpillars} to construct a pillar system $\mathcal{P}'$ extending $\mathcal{P}$ via the new pillars such that 
    \begin{align*}
    \chi^*(\mathcal{P}') &\leq \max(\chi(\mathcal{P}),\deg_{\mathcal{P}}(K)+\omega(H)\lceil\log_2(n+1)\rceil) \\
    &\leq \max(15\omega(H)^2+2\omega(H)\log_2(\omega(H))+5\omega(H),15\omega(H)^2+\omega(H)\lceil\log_2(15\omega(H)^2+1)\rceil). \\ 
    \end{align*}
    This new pillar system satisfies properties (a) and (c) of the theorem statement since
    \begin{align*}
        15\omega(H)^2+\omega(H)\lceil\log_2(15\omega(H)^2+1)\rceil)  &\leq
    15\omega(H)^2+\omega(H)(\log_2(15\omega(H)^2+1)+1) \\ &\leq 
    15\omega(H)^2+\omega(H)(\log_2(16\omega(H)^2)+1) \\ &\leq 
    15\omega(H)^2+2\omega(H)\log_2(\omega(H))+5\omega(H) 
    \end{align*}
    and for each new arch $K'$ of $\mathcal{P'}$ containing at least two disjoint intervals, we have
    \begin{align*}
        \deg_{\mathcal{P}'}(K') &\leq 10\omega(H)^2+\omega(H)\lceil\log_2(15\omega(H)^2+1)\rceil \\
        &\leq 10\omega(H)^2+\omega(H)(\log_2(15\omega(H)^2+1)+1) \\
        &\leq 10\omega(H)^2+\omega(H)(\log_2(16\omega(H)^2)+1) \\
        &= 10\omega(H)^2+2\omega(H)\log_2(\omega(H))+5\omega(H) \\
        &\leq 15\omega(H)^2
    \end{align*}
    where the final inequality can be seen to hold since the derivative of the difference $5\omega(H)^2-2\omega(H)\log_2(\omega(H))-5\omega(H)$ is positive for $\omega(H) \geq 1$, and the inequality holds at $\omega(H) = 1$.
    \end{claimproof}
    
    Thus, we have a pillar system $\mathcal{P}'$ extending $\mathcal{P}$ and satisfying properties (a) and (c) of the theorem statement. We must check if it satisfies property (b), or can be modified to satisfy property (b). 
    \begin{claim}\label{claim:propetybpart1}

    Let $\mathcal{P}'$ be the pillar system extending $\mathcal{P}$ by $q_1, \dots, q_n$ and satisfying properties (a) and (c) of the statement of Theorem \ref{thm:mainresult} that exists by Claim \ref{claim:propertyac}. Then one of the following holds:
    \begin{itemize}
        \item $\mathcal{P}'$ also satisfies property (b).
        \item $\mathcal{P}' \backslash \{q_n\}$ satisfies properties (a), (b), and (c).
        \item The interval $(q_n, R(K))$ contains no elements of $\intI_H$, and the interval $(q_{n-1},R(K))$ contains two disjoint elements of $\intI_H$.
    \end{itemize}
        
    \end{claim}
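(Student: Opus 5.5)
The plan is a case analysis on the new arches of $\mathcal{P}'$, checking property (b) for each one. All arches of $\mathcal{P}'$ outside $K$ are arches of $\mathcal{P}$ and already satisfy (b). The new arches inside $K$ are $(q_{i-1},q_i)$ for $i\in[n]$, with $q_0=L(K)$, together with the final arch $(q_n,R(K))$.

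First, for each $i\in[n]$, the stopping rule of the algorithm chose $q_i=x$ only when $(q_{i-1},x)$ entirely contains an interval of $\intI_H$. Hence every arch $(q_{i-1},q_i)$ satisfies (b), and the only arch that can fail (b) is $(q_n,R(K))$. If $(q_n,R(K))$ contains an interval of $\intI_H$, then $\mathcal{P}'$ satisfies (b), and we are in the first alternative.

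So assume $(q_n,R(K))$ contains no interval of $\intI_H$. Consider $\mathcal{P}'\setminus\{q_n\}$, meaning the same construction with the pillars $q_1,\dots,q_{n-1}$ placed in the order given by Lemma \ref{lem:inductpillars}. Its new arches are $(q_{i-1},q_i)$ for $i\le n-1$, which satisfy (b) as above, and $(q_{n-1},R(K))\supseteq(q_{n-1},q_n)$, which contains an interval of $\intI_H$. So (b) holds for this system.

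Property (a) holds because Lemma \ref{lem:inductpillars} applied to $Q=\{q_1,\dots,q_{n-1}\}$ gives the same bound as in Claim \ref{claim:propertyac}, since $|Q|\le n$. For (c), the arches $(q_{i-1},q_i)$ with $i\le n-1$ each have $\deg_{\mathcal{P}}\le 10\omega(H)^2$, because $x$ was chosen leftmost and degree jumps by at most one colour at a time. The exception is $(q_{n-1},R(K))$. If $(q_{n-1},R(K))$ contains no two disjoint intervals of $\intI_H$, then (c) imposes nothing on it, and the second alternative holds. Otherwise $(q_{n-1},R(K))$ contains two disjoint intervals while $(q_n,R(K))$ contains none, which is exactly the third alternative.

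The main obstacle is the ordering subtlety: removing $q_n$ changes the insertion order prescribed by Lemma \ref{lem:inductpillars}, and so it changes the colourings. I would handle this by reapplying Lemma \ref{lem:inductpillars} afresh with $Q=\{q_1,\dots,q_{n-1}\}$ rather than deleting a pillar from $\mathcal{P}'$. The bounds of Claim \ref{claim:propertyac} then go through verbatim, since they depend only on $|Q|\le 15\omega(H)^2$ and on the $\mathcal{P}$-degrees of the arches.
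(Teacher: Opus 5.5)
Your proposal is correct and follows the paper's argument: only $(q_n,R(K))$ can violate (b), and if it does, either $(q_{n-1},R(K))$ contains two disjoint intervals or dropping $q_n$ yields (a)--(c). Re-applying Lemma~\ref{lem:inductpillars} to $\{q_1,\dots,q_{n-1}\}$, rather than literally deleting $q_n$ from the insertion order, is a sensible way to handle an ordering issue that the paper passes over.

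One small correction to your check of (c). The arches $(q_{i-1},q_i)$ need not have $\deg_{\mathcal{P}}\le 10\omega(H)^2$. If the degree threshold is reached before any interval of $\intI_H$ lies inside $(q_{i-1},x)$, the algorithm keeps moving $x$ to the right and the degree can overshoot. In that case, however, $q_i$ is the right endpoint of the first interval to be contained, so the arch contains exactly one interval of $\intI_H$ and (c) is vacuous for it. Any arch containing two disjoint intervals has degree exactly $10\omega(H)^2$, which is what your bound actually needs.
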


    \begin{claimproof}[Proof of Claim~\ref{claim:propetybpart1}]
    
    It is clear from construction that $\mathcal{P}'$ satisfies property (b) unless $(q_n,R(K))$ does not contain an interval of $\intI_H$. In that case, if $(q_{n-1},R(K))$ does not contain two disjoint intervals from $\intI_H$, we may remove the pillar $q_n$ and check that the resulting construction still satisfies all of the properties (a),(b),(c). Note that $\mathcal{P'} \setminus \{q_n\}$ is non-empty in this case, since $K$ contains two disjoint intervals by assumption. 
    \end{claimproof}

    Thus, towards our goal of finding a pillar system satisfying (a), (b), and (c), we need only consider the last case of the above claim.
    \begin{claim}\label{claim:propertybpart2}
        Let $\mathcal{P}'$ be the pillar system extending $\mathcal{P}$ by $q_1, \dots, q_n$ and satisfying properties (a) and (c) of the statement of Theorem \ref{thm:mainresult} that exists by Claim \ref{claim:propertyac}. Suppose additionally that $(q_n, R(K))$ contains no intervals of $\intI_H$, and that $(q_{n-1},R(K))$ contains two disjoint intervals of $\intI_H$.

        Then there exists a pillar system $\mathcal{Q}$, formed from $\mathcal{P}'$ by moving only the pillar $q_n$ to a new location, such that $\mathcal{Q}$ satisfies properties (a), (b), and (c) of Theorem \ref{thm:mainresult}.
    \end{claim}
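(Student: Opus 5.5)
Since $(q_{n-1},R(K))$ contains two disjoint intervals of $\intI_H$, it contains an interval of $\intI_H$ whose right endpoint is leftmost among those intervals. Call it $I^*$; all others there end later. I would move $q_n$ to a point $q'$ just to the right of $R(I^*)$, namely $q' \in (R(I^*), x)$, where $x$ is the next endpoint of any interval after $R(I^*)$. Let $\mathcal{Q}$ be the resulting system. The plan is to show that $q'$ lies in $(q_{n-1},q_n]$ or beyond, and that both new arches $(q_{n-1},q')$ and $(q',R(K))$ each contain an interval of $\intI_H$. Then property (b) holds. Properties (a) and (c) should then follow by re-running the degree estimates of Claim~\ref{claim:propertyac}.

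For (b): $(q_{n-1},q')$ contains $I^*$ by construction. For $(q',R(K))$, I would use the second disjoint interval. Two disjoint intervals in $(q_{n-1},R(K))$ cannot both end at or before $R(I^*)$, since $I^*$ has the minimum right endpoint. Given one disjoint from $I^*$, I want some interval lying entirely to the right of $R(I^*)$. If $I^*$ is not one of the disjoint pair $\{A,B\}$ with $R(A)<L(B)$, then $R(I^*)\le R(A)<L(B)$, so $B\subseteq(q',R(K))$ for $q'$ chosen close enough to $R(I^*)$. Otherwise $I^*=A$, and $B$ again lies to the right.

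I also need $q'\ge q_n$, or at least that the arch left of $q'$ does not lose the interval previously witnessing $(q_{n-2},q_{n-1})$; that arch is untouched since only $q_n$ moves. Because $(q_n,R(K))$ contains no interval, every interval inside $(q_{n-1},R(K))$ has $R>q_n$. Hence $R(I^*)>q_n$ and $q'>q_n$. So the move shifts $q_n$ rightward.

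For (a) and (c), I would argue that the colouring bookkeeping in Lemma~\ref{lem:inductpillars} depends only on the set of pillars and the degree bounds of the resulting gaps. The left new arch $(q_{n-1},q')$ has $\mathcal{P}$-degree possibly exceeding $10\omega(H)^2$. But the greedy algorithm's choice of $q_n$ as the leftmost point meeting both conditions, together with its termination, bounds it. The point $q_n$ was placed when the degree first hit $10\omega(H)^2$, or exactly when an interval appeared.

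This is the main obstacle. I would bound $\deg_{\mathcal{P}}((q_{n-1},q'))$ by the degree of $(q_{n-1},q_n)$ plus the colours of intervals with an endpoint in $[q_n,q')$. Since the algorithm found no valid $x$ beyond $q_n$ and no interval lies in $(q_n,q')$, I would expect this extra contribution to be controlled. If it is not, I would fall back on requiring only $\deg\le 15\omega(H)^2$ minus the logarithmic term, using the slack between $10\omega(H)^2$ and $15\omega(H)^2$. Combining this with Lemma~\ref{lem:inductpillars} applied to the pillars $q_1,\dots,q_{n-1},q'$ then gives (a) and (c) exactly as in Claim~\ref{claim:propertyac}.
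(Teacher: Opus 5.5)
There is a genuine gap, and it starts with a sign error. Your inference that every interval inside $(q_{n-1},R(K))$ has $R>q_n$ is false. The algorithm only places $q_n$ once $(q_{n-1},q_n)$ contains an interval of $\intI_H$, so $R(I^*)<q_n$ and your $q'$ actually lies \emph{left} of $q_n$. What the hypothesis gives is that every such interval has $L<q_n$. Note also that if $q'>q_n$ held, then $(q',R(K))\subseteq(q_n,R(K))$ would contain no interval, contradicting your own claim that it contains $B$.

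Property (b) for your $\mathcal{Q}$ is fine. With the correct picture, the left arch $(q_{n-1},q')$ contains only $I^*$, so (c) is vacuous there and your ``main obstacle'' is not an obstacle. The real problem is the right arch $(q',R(K))$. It absorbs the part $(q',q_n)$ of the old penultimate arch together with the whole old last arch $(q_n,R(K))$. It can contain two disjoint intervals, for example when $(q_{n-1},q_n)$ contains three pairwise disjoint intervals, so (c) applies to it.

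Yet nothing bounds $\deg_{\mathcal{P}}((q_n,R(K)))$: the algorithm stopped there because no interval of $\intI_H$ fits, not because the degree was small. The only available bound is $\deg_{\mathcal{P}}((q',R(K)))\le\deg_{\mathcal{P}}(K)\le 15\omega(H)^2$. After adding the $\omega(H)\lceil\log_2(n+1)\rceil$ term from Lemma~\ref{lem:inductpillars}, the bound required by (c) is exceeded. Appealing to the ``slack'' between $10\omega(H)^2$ and $15\omega(H)^2$ cannot fix this, since the slack is already used up by $\deg_{\mathcal{P}}(K)$ itself.

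The repair is to mirror your choice, which is what the paper does. Take the interval $X$ contained in $(q_{n-1},R(K))$ with the \emph{largest left} endpoint, and move $q_n$ to just left of $L(X)$. This is a move to the left, since $L(X)<q_n$.

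\begin{itemize}
\item The new right arch contains only $X$, so (c) is vacuous there.
\item The new left arch still contains $A$, because $R(A)<L(B)\le L(X)$.
\item The new left arch is a subinterval of $(q_{n-1},q_n)$. If it contains two disjoint intervals, then $(q_{n-1},q_n)$ contained at least two intervals. By the construction this forces $\deg_{\mathcal{P}}((q_{n-1},q_n))=10\omega(H)^2$.
\item Lemma~\ref{lem:inductpillars} depends only on the number of new pillars, so it then gives (a) and (c) exactly as in Claim~\ref{claim:propertyac}.
\end{itemize}

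The paper phrases this as two cases, according to whether $(q_{n-1},q_n)$ contains exactly one or at least two intervals. In the one-interval case your construction happens to work too, since then every interval inside $(q',R(K))$ contains $q_n$ and so no two of them are disjoint. In the other case it does not.
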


    \begin{claimproof}[Proof of Claim~\ref{claim:propertybpart2}]
    Consider the second-to-last arch $(q_{n-1},q_n)$. If $(q_{n-1},q_n)$ itself contains at least two intervals of $\intI_H$, then $\deg_{\mathcal{P}}((q_{n-1},q_n)) = 10\omega(H)^2$ by construction. Thus, we may shift $q_n$ to the left until $(q_n,R(K))$ contains exactly one interval, and call the resulting pillar system $\mathcal{Q}$. Since there are two disjoint intervals in $(q_{n-1},R(K))$, there remains an interval in $(q_{n-1},q_n)$, so the system $\mathcal{Q}$ clearly satisfies properties (b) and (c); furthermore, shifting this pillar does not affect the portion of Lemma \ref{lem:inductpillars} relating to chromatic number (only the number of pillars matters), so this new system satisfies (a) as well.

    Now we assume that $(q_{n-1},q_n)$ does not contain at least two intervals. By construction, $(q_{n-1},q_n)$ then contains exactly one interval $I_N$; by assumption, there also exists at least one interval with one endpoint contained in $(q_{n-1},q_n)$, and one endpoint in $(q_n,R(K))$. Let $I'$ be the one of these intervals with rightmost left endpoint. Slide $q_n$ left to $q_n' = L(I')$, and call the resulting pillar system (with $q_n'$ instead of $q_n$) $\mathcal{Q}$. Then the resulting interval $(q_{n-1}, q_n')$ contains exactly one interval (namely $I_N$), as otherwise $(q_{n-1},R(K))$ does not contain two disjoint intervals. Moreover, $(q_n,R(K))$ now contains exactly one interval since it previously contained none and by the choice of $I'$ by rightmost left endpoint. Thus, properties (b) and (c) are satisfied by $\mathcal{Q}$, and as above, the bound on chromatic number is unchanged, so (a) is also satisfied.
    \end{claimproof}

    Finally, we need only check that whichever pillar system we output does in fact place a pillar in between two disjoint intervals of $\intI_H$.
    \begin{claim}\label{claim:finalclaim}

    Let $\mathcal{Q}$ be a pillar system extending $\mathcal{P}$ and satisfying properties (a), (b), and (c), as exists by the previous claims. 

    Then either $\mathcal{Q}$ satisfies the statement of Theorem \ref{thm:mainresult}, or we may construct a pillar system that does.
        
    \end{claim}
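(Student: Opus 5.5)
The goal is to show that the system $\mathcal{Q}$ produced by Claims~\ref{claim:propertyac}--\ref{claim:propertybpart2} (or a small modification of it) separates some pair of disjoint intervals of $\intI_H$ that were both contained in $K$. The proof is a case analysis on which of the three outcomes of Claim~\ref{claim:propetybpart1} produced $\mathcal{Q}$. Separation is automatic as soon as $\mathcal{Q}$ contains a pillar $q$ in $K$ with an interval of $\intI_H$ entirely in $K$ to its left and another entirely in $K$ to its right: those two intervals are disjoint, lie in the arch $K$ of $\mathcal{P}$, and lie in different arches of $\mathcal{Q}$.

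\textbf{Case 1: at least two new pillars in $K$.} Suppose $\mathcal{Q}$ contains at least two new pillars in $K$, say $q_1$ and some later $q_j$.
\begin{itemize}
\item By construction (or by the sliding in Claim~\ref{claim:propertybpart2}, which keeps an interval in $(q_{n-1},q_n')$), the arch $(q_0,q_1)=(L(K),q_1)$ contains an interval of $\intI_H$. This is exactly the second condition in the choice of $q_1$.
\item Property (b) for $\mathcal{Q}$ gives an interval in the arch immediately to the right of $q_1$.
\item These two intervals are disjoint and are separated by $q_1$, so we are done.
\end{itemize}

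\textbf{Case 2: exactly one new pillar.} So $n=1$, or $n=2$ and $q_2$ was removed in the second outcome of Claim~\ref{claim:propetybpart1}. Let $q$ be the remaining pillar.
\begin{itemize}
\item $(L(K),q)$ contains an interval by the choice of $q_1$ (or by the sliding construction).
\item $(q,R(K))$ contains an interval by property (b).
\item So again $q$ separates two disjoint intervals of $K$.
\end{itemize}

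\textbf{Case 3: no pillar placed.} This is the degenerate case where the algorithm found no $x$ at all. Then $\mathcal{Q}=\mathcal{P}$ and nothing is separated, which is where "we may construct a pillar system that does" is needed.
\begin{itemize}
\item Since no $x$ was found, $\deg_{\mathcal{P}}(K)<10\omega(H)^2$. Indeed, $K$ contains an interval, so taking $x$ close to $R(K)$ satisfies the second condition, and the first must therefore fail. (The degree of $(q_0,x)$ is monotone in $x$ and tends to $\deg_{\mathcal{P}}(K)$.)
\item Pick two disjoint intervals $I_a,I_b\subseteq K$ with $R(I_a)<L(I_b)$. Choose them so that $R(I_a)$ is minimal among right endpoints of intervals in $K$, and $L(I_b)$ is the first left endpoint after it.
\item Place one pillar $q\in(R(I_a),L(I_b))$. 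Then $(L(K),q)\supseteq I_a$ and $(q,R(K))\supseteq I_b$, so (b) holds for the new arches, and the remaining arches of $\mathcal{P}$ are unchanged.
\item For (a), the new pillar uses at most $\omega(H)$ new colours beyond those seen by $K$, so $\chi^*\le\max(\chi^*(\mathcal{P}),10\omega(H)^2+\omega(H))$, which is within the bound in (a).
\item For (c), each new arch has degree at most $10\omega(H)^2+\omega(H)\le 15\omega(H)^2$.
\item This is the same computation as in Claim~\ref{claim:lemmadoesnotholdstep2}. The new pillar separates $I_a$ from $I_b$.
\end{itemize}

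\textbf{Expected main obstacle.} The delicate part is confirming that the claimed interval to the left of the first new pillar survives every modification. This means checking that the removal in Claim~\ref{claim:propetybpart1} and the sliding in Claim~\ref{claim:propertybpart2} only alter $q_n$. It also means checking that when $n=1$ and $q_1$ is slid, the arch $(L(K),q_1')$ still contains $I_N$. That holds because the sliding stops at $L(I')$, which lies to the right of $I_N$ since $I'$ is not contained in $(q_{n-1},q_n)$.

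I would verify each of these pointwise, using the explicit descriptions in the proofs of those two claims, before concluding.
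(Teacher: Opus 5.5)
Your proposal is correct and is essentially the paper's argument: if the algorithm placed a pillar, a surviving new pillar in $K$ separates two disjoint intervals of $K$; otherwise $\deg_{\mathcal{P}}(K)<10\omega(H)^2$, and one pillar between two disjoint intervals works by the computation of Claim~\ref{claim:lemmadoesnotholdstep2}. You rightly redo that computation for the bound $10\omega(H)^2$, whereas that claim's hypothesis is $4\omega(H)^2$.

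Your anticipated ``main obstacle'' is unnecessary. Property (b) is a hypothesis on $\mathcal{Q}$, so it already gives an interval in each of the two arches of $\mathcal{Q}$ adjacent to any new pillar. Also, your stated reason for $L(I')>R(I_N)$ is not the right one. It actually holds because intervals crossing $q_n$ pairwise intersect, so the disjoint pair in $(q_{n-1},R(K))$ must be $I_N$ together with a crossing interval that starts after $R(I_N)$.
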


    \begin{claimproof}[Proof of Claim~\ref{claim:finalclaim}]

    By construction, as long as at least one pillar is placed by the algorithm, we also split a pair of disjoint intervals of $K$ into different arches, so the only remaining case to consider is if no pillars are placed at the start of the algorithm. This happens only when $\deg_{\mathcal{P}}(K) < 10\omega(H)^2$, but we have already established that in this case the theorem holds by Claim \ref{claim:lemmadoesnotholdstep2}.
    \end{claimproof}

    This concludes the overall proof of Theorem \ref{thm:mainresult}. 
\end{proof}

Now we simply need to wrap things up.

\begin{cor}\label{cor:chinumber}
    Let $H$ be a crossing graph. Then 
     $$\chi(H) \leq 15\omega(H)^2+2\omega(H)\log_2(\omega(H))+6\omega(H).$$
\end{cor}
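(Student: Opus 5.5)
We plan to obtain the corollary by iterating Theorem~\ref{thm:mainresult} until no arch contains two disjoint intervals, and then colouring the leftover uncoloured intervals inside each arch with $\omega(H)$ fresh colours, using Lemma~\ref{lem:compgraph}. Write $\omega=\omega(H)$.

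\textbf{Initial pillar system.} If no two intervals of $\intI_H$ are disjoint, then by the Helly property for intervals all intervals share a common point. Lemma~\ref{lem:compgraph} then gives $\chi(H)\le\omega$ directly, so we may assume some pair is disjoint. We start with a pillar system consisting of a single pillar $p_1$. We choose $p_1$ so that both arches $(0,p_1)$ and $(p_1,1)$ contain an interval of $\intI_H$; for instance, take $p_1$ strictly between $R(I)$ and $L(J)$ for a disjoint pair with $I<_\intI J$. Then $\phi_{p_1}$ uses at most $\omega$ colours, so (a) holds. Property (b) holds by the choice of $p_1$. Each arch has degree at most $\omega\le 15\omega^2$, so (c) holds.

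\textbf{Iteration.} While some arch contains a pair of disjoint intervals, we apply Theorem~\ref{thm:mainresult} to extend the pillar system, preserving (a)--(c). Each application separates some pair of disjoint intervals that previously lay in a common arch. Once two intervals lie in different arches they never lie in a common arch again, since arches only get refined. There are finitely many pairs, so the process terminates with a pillar system $\mathcal{P}$ satisfying (a)--(c) in which no arch contains two disjoint intervals of $\intI_H$.

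\textbf{Final colouring.} Every interval is now either assigned to a pillar, and hence coloured by $\phi_{\mathcal{P}}$ with at most $15\omega^2+2\omega\log_2\omega+5\omega$ colours, or it lies entirely inside a single arch.
\begin{itemize}
\item For each arch $K$, the uncoloured intervals inside $K$ are pairwise intersecting. By the Helly property they share a common point $p_K$, so Lemma~\ref{lem:compgraph} colours them with $\omega$ colours.
\item We use the same $\omega$ new colours in every arch. This is legitimate because an edge of $H$ requires the two intervals to overlap, and intervals contained in distinct arches are disjoint.
\item These new colours are disjoint from all colours used by $\phi_{\mathcal{P}}$, so no conflict arises with assigned intervals. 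We do not even need the degree bound here, only that the palette is fresh.
\end{itemize}
The total number of colours is therefore at most $15\omega^2+2\omega\log_2\omega+6\omega$.

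The main subtlety is setting up the base case so that hypothesis (b) holds initially, and checking that termination genuinely leaves pairwise intersecting intervals in every arch. Both are straightforward given the Helly property.
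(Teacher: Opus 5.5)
Your proposal is correct and follows the paper's own argument. You iterate Theorem~\ref{thm:mainresult} until no arch contains two disjoint intervals, then colour the leftover intervals of every arch (pairwise intersecting, hence sharing a common point) with one shared fresh palette of $\omega(H)$ colours via Lemma~\ref{lem:compgraph}; this is valid because intervals in distinct arches are disjoint and therefore non-adjacent. Your explicit base case and your termination argument (the number of disjoint pairs lying in a common arch strictly decreases) make precise what the paper leaves implicit in ``by repeatedly applying Theorem~\ref{thm:mainresult}''.
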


\begin{proof}
    By repeatedly applying Theorem \ref{thm:mainresult}, we terminate in a pillar system $\mathcal{P}$ for $H$ such that $\chi(\mathcal{P}) \leq 15\omega(H)^2+2\omega(H)\log_2(\omega(H))+5\omega(H)$, and all remaining uncoloured intervals may be partitioned into sets $A_1, \dots, A_k$ so that for each $i$, all intervals in $A_i$ are contained in the same arch (and all intervals contained in this arch are in $A_i$). By construction, each interval of $A_i$ is non-adjacent to each interval of $A_j$ for $i \neq j$, and additionally, since no arch contains two disjoint intervals, each $A_i$ consists of intervals that all contain some common point. Therefore, we may select a set of $\omega$ colours not previously used, and use this same set to colour the intervals of each $A_i$ independently by Lemma \ref{lem:compgraph}, and we are done.
\end{proof}

\subsection{Main Result}

Taking together everything in the previous section, we may now prove our main result.

\newtheorem*{thm:finaltheorem}{Theorem \ref{thm:finaltheorem}}
\begin{thm:finaltheorem}

Let $T$ be a crossing tournament. Then
$$
\vchi(T) \leq 15\vomega(T)^4+2\vomega(T)^3\log_2(\vomega(T))+6\vomega(T)^3.
$$

\end{thm:finaltheorem}

\begin{proof}
    Letting $G'$ be a backedge graph of $T$ realizing the clique number of $T$, by Observation \ref{obs:reduction} and Lemma \ref{lem:minchromD} we have $\vchi(T) \leq \chi(G') \leq (\vomega(T))^2f(\vomega(T))$, where $f$ is a $\chi$-bounding function for crossing graphs.  
    Now let $\vomega(T) = \omega$. By Corollary \ref{cor:chinumber}, one such function is $f(\omega) =  15\omega^2+2\omega\log_2(\omega)+6\omega$. Plugging this into the above yields the desired result.
\end{proof}

\section{Chordal Backedge graphs}
\label{sec:chordal}
In this section we prove Theorem \ref{thm:chordalexample}. The idea is to use the fact that chordal graphs have a perfect elimination ordering which can be represented as a tree structure, and that Burling graphs, which are triangle-free graphs with large chromatic number, also have a tree representation. Recall that chordal graphs are graphs such that every cycle of length at least four is not induced. Another useful equivalent way of thinking about chordal graphs is that they are precisely the graphs which admit \textbf{perfect elimination orderings} \cite{FulkersonGross1965}. Given a graph $G$, a linear ordering $v_{1},\ldots,v_{n}$ of $V(G)$ is a \textbf{perfect elimination ordering} of $G$ if, for every $i \in \{1,\ldots,n\}$, the set $\{v_{j} : j >i \text{ and } v_{i}v_{j} \in E(G)\}$ is a clique. 
We will need the definition of Burling Graphs, and in particular the Burling Tree definition from \cite{BurlingGraphRevisted1,BurlingGraphRevisited2}.

\begin{definition}
Given a rooted tree $(T, r)$, a \emph{branch} in $T$ is a path $v_1, \dots, v_k$ such that for all $i \in \{1, \dots, k-1\}$, $v_i$ is the parent of $v_{i+1}$. 

A \textbf{Burling tree} is a $4$-tuple $(T,r,\ell, c)$ in which

\begin{itemize}
    \item $T$ is a rooted tree and $r$ is its root,
    \item $\ell$ is a function associating to each vertex $v$ of $T$ which is not a leaf, one child of $v$ which is called the \textbf{last-born} of $v$,
    \item $c$ is a function defined on the vertices of $T$. If $v$ is a non-last-born vertex in $T$ other than the root, then $c$ associates to $v$ the vertex set of a (possibly empty) branch in $T$ starting at the last-born of the parent of $v$. If $v$ is a last-born or the root of $T$, then we define $c(v) = \emptyset$. We call $c$ the \textit{choose} function of $T$.
\end{itemize}
\end{definition}
We say the oriented graph $G$ is \textbf{fully derived} from the Burling tree $(T,r,\ell,c)$ if it is the oriented graph whose vertex set is $V(T)$ and $uv \in A(G)$ if and only if $v$ is a vertex in $c(u)$.

A non-oriented graph $G$ is \textbf{fully derived} from $(T,r,\ell,c)$ if it is the underlying graph of the oriented graph fully derived from $T$. 

A graph (respectively, oriented graph) $G$ is \textbf{derived} from a Burling tree $(T,r,\ell,c)$ if it is an induced subgraph of a graph (respectively, oriented graph) fully derived from $T$. An oriented or non-oriented graph $G$ is called a \textit{derived graph} if there exists a Burling tree $(T,r,\ell,c)$ such that $G$ is derived from $(T,r,\ell,c)$. 

As shown in~\cite{BurlingGraphRevisted1}, a graph is a Burling graph if and only if it is a derived graph\footnote{Originally, Burling Graphs were defined through axis-aligned box intersection graphs \cite{Burling1965}.} We need a classical fact that Burling graphs are triangle-free, and have arbitrarily large chromatic number.

\begin{theorem}[\cite{Burling1965}]
    For every positive integer $k$, there is a Burling Tree $(T_{k},r,\ell,c)$ such that the fully derived graph $G$ from $(T_{k},r,\ell,c)$ is triangle-free and $\chi(G) \geq k$. 
\end{theorem}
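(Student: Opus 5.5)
The statement is Burling's classical theorem, which the paper cites from \cite{Burling1965}. Using the equivalence between Burling graphs and derived graphs from \cite{BurlingGraphRevisted1}, one could transfer Burling's box construction directly. I would instead give a self-contained argument in the tree language. It has two independent parts: first, every fully derived graph is triangle-free; second, an inductive construction of Burling trees $(T_k,r,\ell,c)$ with $\chi \geq k$.

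\textbf{Triangle-freeness.} I would record the ancestry structure of an arc $uv$. Here $v \in c(u)$, so $v$ lies on a branch starting at the last-born $\ell(w)$ of the parent $w$ of $u$. Hence $v$ is a descendant of $\ell(w)$, while $u$ is a non-last-born sibling of $\ell(w)$, so $u$ and $v$ are incomparable in the tree order. Moreover, $c(x)$ is empty for every last-born $x$, including $\ell(w)$ itself.

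Now suppose $x,y,z$ form a triangle. I would case on the orientation, either a directed $3$-cycle or a transitive triangle, and compare the parents of the tails. For the transitive case $x\to y$, $x\to z$, $y\to z$: both $y$ and $z$ lie on one branch below $\ell(\mathrm{parent}(x))$, so one is an ancestor of the other. But $y\to z$ forces $y$ and $z$ to be incomparable, a contradiction. For the directed $3$-cycle case, each head is a descendant of a sibling of its tail. Following this around the cycle produces a strict descent in depth that cannot close up, giving a contradiction.

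\textbf{Large chromatic number.} I would prove a strengthened statement by induction on $k$, modelled on the probe argument of Pawlik et al.\ for segment graphs. For each $k$, construct a Burling tree $T_k$ together with a family of \emph{probes}: branches $\beta$ of $T_k$ that are legal choose-sets for a new non-last-born vertex attached in a designated position. The strengthened claim is that for every proper colouring of the derived graph $G_k$, some probe $\beta$ has the property that the vertices of $G_k$ pointing into $\beta$ already use at least $k$ distinct colours.

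For the inductive step, take $T_k$ and, at each probe, hang a fresh copy of $T_k$ below the end of that branch. I would arrange last-borns so that the branch extends through the copy. I would then add one new non-last-born vertex $v$ whose choose-set $c(v)$ is the resulting extended branch. Given a colouring, two cases arise. If some copy's probe sees a colour not already seen by the outer probe, we obtain $k+1$ colours on a probe. Otherwise $v$ is adjacent to vertices carrying all $k$ colours and must itself use a $(k+1)$-st colour. The new probes are the extended branches, with and without $v$ attached.

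I expect the main obstacle to be making this gluing respect the rigid Burling tree axioms. Each $c(v)$ must be a single branch starting at the last-born of the parent of $v$, which constrains where probes may lie and where new vertices may be attached. I would first try to make every probe a branch that starts at a last-born of the root region and descends only through last-borns inside each substituted copy. The new vertex $v$ is then attached as a non-last-born sibling of that branch's first vertex, so that $c(v)$ is legal by definition.
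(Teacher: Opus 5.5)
You take a genuinely different route from the paper, which gives no proof and simply cites Burling's box construction together with the Pournajafi--Trotignon equivalence between Burling graphs and derived graphs. Your triangle-freeness part is sound. In the cycle case, the strict depth increase holds because a head at the same depth as its tail must be $\ell(w)$ itself, whose choose set is empty, so it cannot be the tail of the next arc.

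The gap is in the colouring induction, exactly at the step you flagged. You attach $v$ as a non-last-born sibling of the first vertex of the outer probe $\beta$, with $c(v)$ the whole extended branch, and then take \emph{the extended branch with $v$ attached} as a new probe. That set is never a branch. As you showed, the two ends of any arc are incomparable in the tree order, whereas a branch is a chain, so no branch contains $v$ together with any vertex of $c(v)$. With $v$ a child of $w$ (where $\beta$ starts at $\ell(w)$), every branch through $v$ consists of $v$, $w$, and ancestors of $w$, whose colours are uncontrolled. So in your second case, where $v$ is forced onto a colour outside $\phi(\beta)$, no probe collects this new colour together with the $k$ colours of $\beta$, and the induction does not close. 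Separately, your invariant should count the colours on the vertices of $\beta$ itself, since these are the neighbours of a new vertex whose choose set contains $\beta$. The colours of vertices pointing into $\beta$ play no role.

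The missing idea, as in Pawlik et al., is that the new vertex must see only the \emph{inner} probe, so that it can share a branch with the outer one. Hang the copy of $T_k$ below the last vertex of $\beta$, either as a non-last-born child with empty choose set or as the last-born if that vertex is a leaf, so no existing choose set becomes illegal. For each probe $\gamma$ of the copy, starting at $\ell(w')$ say, add $v$ as a new non-last-born child of $w'$ with $c(v)=\gamma$. Take as new probes the branch $\beta_1$ from the first vertex of $\beta$ down to the last vertex of $\gamma$, and the branch $\beta_2$ from the first vertex of $\beta$ down to $v$. Both start at the last-born $\ell(w)$, so both are legal choose sets for a future child of $w$. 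If $|\phi(\beta_1)|\le k$, then $\phi(\beta_1)=\phi(\beta)=\phi(\gamma)$. Since $v$ is adjacent to all of $\gamma$, it follows that $\phi(\beta_2)\supseteq\phi(\beta)\cup\{\phi(v)\}$ has at least $k+1$ colours. The base case is a root with a single last-born child as the only probe. Drop your requirement that probes descend only through last-borns: it is unnecessary, and it is incompatible with this fix, since $\beta_2$ must end at the non-last-born $v$.
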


We are now ready to show that there exist tournaments with both a Burling graph and a chordal graph as backedge graphs.

\begin{lemma}
\label{lem:burling}
    Let $k$ be any positive integer. Let $(T,r,\ell,c)$ be a Burling Tree with fully derived graph $G$ such that $G$ is triangle-free and has chromatic number at least $k$. Then there exists a tournament $Q$ and two orders of $V(Q)$, $<_{1}$ and $<_{2}$, such that $B(Q,<_{1})$ is isomorphic to $G$, and $B(Q,<_{2})$ is isomorphic to a chordal graph $C$ where $<_{2}$ is a perfect elimination ordering of $C$. 
\end{lemma}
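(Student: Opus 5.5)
The plan is to build $Q$ from the tree $T$ itself and fix the two orders together. Since a tournament is determined by any one ordering and its backedge graph, it is enough to find two orders $<_1,<_2$ of $V(T)$ with the following property: for every pair $\{u,v\}$, ``$uv\in E(G)$'' and ``$uv\in E(C)$'' differ exactly when $<_1$ and $<_2$ disagree on $\{u,v\}$. Define $Q$ by declaring its backedges with respect to $<_1$ to be $E(G)$. Then $B(Q,<_2)$ has edge set $E(G)\,\triangle\,D$, where $D$ is the set of pairs on which the orders disagree. So the real task is to choose $<_1,<_2$ so that $C:=G\triangle D$ is chordal with $<_2$ a perfect elimination ordering.

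The structural observation driving the choice concerns ancestor--descendant pairs of $T$. Every edge $uv$ of $G$ (with $v\in c(u)$) joins a non-last-born child $u$ of some vertex $w$ to a vertex $v$ in the subtree of the last-born $\ell(w)$. Such $u,v$ are never in an ancestor--descendant relation. I will aim for the following:
\begin{itemize}
\item $C$ is a subgraph of the ancestor--descendant (comparability) graph of $T$.
\item $<_2$ lists every vertex before all of its ancestors, for instance a post-order.
\end{itemize}
With these, for each $x$ the later $C$-neighbours of $x$ are ancestors of $x$, so they lie on a single root path. It remains to ensure they are pairwise $C$-adjacent, and the simplest way is to arrange that this set is upward closed along the path, or is handled by a uniform rule that makes the relevant ancestor pairs all edges.

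Concretely, I will take $<_2$ to be a post-order of $T$ in which, at every vertex $w$, the subtree of $\ell(w)$ is listed before the subtrees of the other children of $w$. I will take $<_1$ to be obtained from $<_2$ by local swaps chosen so that the disagreement set $D$ has two parts:
\begin{itemize}
\item On cross pairs it is exactly $E(G)$. To achieve this I will move each non-last-born child $u$ of $w$ in $<_1$ relative to the branch $c(u)$ inside the subtree of $\ell(w)$. The branch structure of $c(u)$ (a path descending from $\ell(w)$) is what should let a single repositioning flip exactly the pairs $\{u,v\}$ with $v\in c(u)$.
\item Every remaining disagreement is an ancestor--descendant pair.
\end{itemize}
Then $C=G\triangle D$ consists only of ancestor--descendant pairs. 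The verification then splits into two checks. First, cross pairs give $E(G)$ in $<_1$ and no edge in $<_2$. Second, the ancestor pairs produced by $D$ satisfy the clique condition at each vertex, so that $<_2$ is a perfect elimination ordering.

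The main obstacle is the middle step: designing $<_1$ so that it flips exactly the pairs $\{u,v\}$ with $v\in c(u)$ and no other cross pairs. The difficulty is that $c(u)$ is an arbitrary (possibly empty) initial branch from $\ell(w)$, different for different siblings $u$, while descendants of $u$ must also be placed consistently. If a pure reordering cannot realise exactly this, my fallback is to relax the aim that cross pairs in $C$ be empty. Instead I would allow $C$ to contain cross edges between a non-last-born child and vertices of the last-born subtree \emph{not} on its branch. I would then recheck that these, together with ancestor edges, still give later-neighbourhoods that are cliques under $<_2$. This works if a vertex's later neighbours always lie in a set such as ``an initial segment of a root path plus a subtree attached to it'', which is what the Burling tree structure makes plausible.
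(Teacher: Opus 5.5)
Your reduction is sound: once $B(Q,<_1)=G$ is fixed, $B(Q,<_2)=G\triangle D$. You also correctly note that no edge of $G$ joins an ancestor--descendant pair. The gap is the step you yourself flag, and with the $<_2$ you commit to it is impossible, not merely hard.

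Let $w$ have non-last-born children $u_1,u_2$, and let $\ell(w)$ have distinct children $a,b$ with $c(u_1)=\{\ell(w),a\}$ and $c(u_2)=\{\ell(w),b\}$. The definition allows this. Such a gadget can also be attached next to any given Burling tree under a new root without destroying triangle-freeness or $\chi(G)\ge k$. In your post-order the subtree of $\ell(w)$ precedes those of $u_1,u_2$, so $a,b<_2u_1,u_2$, and the four pairs $\{u_i,a\},\{u_i,b\}$ are all cross pairs. Demanding that $D$ agree with $E(G)$ on cross pairs forces $u_1<_1a$, $b<_1u_1$, $u_2<_1b$ and $a<_1u_2$. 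That is the cycle $b<_1u_1<_1a<_1u_2<_1b$, so no such $<_1$ exists. Your fallback is a heuristic rather than an argument.

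The missing idea is to leave $G$ alone and flip \emph{only} ancestor--descendant pairs. Take $<_1$ and $<_2$ to be the pre-order and post-order of a single depth-first search of $T$ in which each vertex visits its last-born child \emph{last}. Two vertices are ordered differently by these exactly when one is an ancestor of the other. So $D$ is the set of all ancestor--descendant pairs, which is disjoint from $E(G)$ by your observation, and hence $C=G\cup D$.

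Now fix a vertex $v$ and look at its later neighbours in $<_2$:
its descendants precede it in $<_2$;
the vertices of $c(v)$ lie in the subtree of the last-born sibling of $v$, which comes after $v$;
any $x$ with $v\in c(x)$ is a non-last-born sibling of an ancestor-or-self of $v$, so $x<_2v$.
Thus the later $C$-neighbours of $v$ are exactly its ancestors together with $c(v)$. These lie on one root path (root, \dots, parent $w$ of $v$, $\ell(w)$, down the branch), so they are pairwise adjacent in $C$, because every ancestor--descendant pair lies in $D$.

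The child order matters here. With your last-born-first convention, the later neighbourhood of $\ell(w)$ would contain every non-last-born child $u$ of $w$ with $c(u)\neq\emptyset$, and these are pairwise non-adjacent.
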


\begin{proof}
    We construct the two orderings as follows. First, at each internal vertex $v \in V(T)$, fix an ordering $<$ of the children of $v$ such that the last-born vertex is last in this ordering. Now let $<_{1}$ be the ordering of vertices of $V(T)$ which is a depth first search tree starting at $r$ where at every vertex $v$, we search the children of $v$ in the order of $<$. Let $<_{2}$ be obtained from the same DFS search tree of $<_{1}$, except instead of recording the first time we reach a vertex, we record the last time we visit the vertex in the search tree. 

    Observe that $u <_{1} v$ and $v <_{2} u$ if and only if  either $u$ lies on the path from $v$ to $r$, or $v$ lies on the path from $u$ to $r$. This follows since if $u$ and $v$ are not on the same path to $r$, then if a DFS search first visits $u$, it will visit $u$ for the last time before visiting $v$, and vice versa. If $u$ lies on the path from $v$ to $r$, then $u <_{1} v$ by construction, but $v <_{2} u$ as the last visit to $u$ is after the last visit to $v$. 

    Now let $C$ be the graph obtained from $G$ by adding an edge between every pair of vertices $u,v$ such that $u<_{1} v$ and $v <_{2} u$, or $u <_{2} v$ and $v <_{1} u$; that is, $u$ and $v$ such that there exists an ancestral relationship between $u$ and $v$.  Let $Q$ be the tournament such that $B(Q,<_{1})$ is isomorphic to $G$. We claim that $C$ is chordal, $B(Q,<_{2}) = C$, and that $<_{2}$ is a perfect elimination ordering for $C$.

    First let us show that $B(Q,<_{2}) = C$. Note that if $uv \in E(G)$, then $u$ and $v$ do not lie on the same path to $r$: If we assume without loss of generality that $u$ is not a last-born, then $v$ lies in a branch starting at the last-born of the parent of $u$. Thus, for every edge of $G$, its ends have the same relative order in $<_1$ and $<_2$, and so every edge of $G$ is an edge of $C$. Now we only need to consider the case when $uv \in E(C)$ but $uv \not \in E(G)$. By construction, this implies that $u <_{1} v$ and $v <_{2} u$, or $u <_{2} v$ and $v <_{1} u$.  If $u <_{1} v$, $v<_{2} u$ and $uv \not \in E(G)$, then $vu$ is an arc of $Q$, which implies that $vu \in B(Q,<_{2})$. The other case is symmetric. 

    Thus all that is left to do is show that $C$ is chordal and that $<_{2}$ is a perfect elimination ordering. Fix a vertex $v \in V(C)$, and let $N = \{u \in N_C(v): v <_2 u\}$ be the set of vertices that are neighbours of $v$, and also after $v$ under $<_{2}$. We want to show that $N$ induces a clique. 

    Observe that by construction, no descendant of $v$ in $T$ is after $v$ in the ordering $<_2$. If $vx \in E(G)$, and $v <_{2} x$ then by construction, $x$ is in $c(v)$, and thus $v$ is a non-last-born vertex in this case. In other words, $N \cap N_G(v) = c(v).$ In particular, if $v$ is last-born, then all neighbours obtained from $G$ precede $v$. It remains to consider $N' = N \setminus N_G(v)$. Since $N'$ contains no descendant of $v$, and from the definition of $C$, it follows that $N'$ is the set of ancestors of $v$ in $T$. It follows that $N$ induces a branch in $T$, starting from $r$, to the parent of $v$, to the last-born sibling of $v$, and then along $c(v)$.   This implies that there is an ancestral relationship between every pair of vertices in $N$.  Therefore $N$ is a clique, and thus $<_{2}$ is a perfect elimination ordering. It follows that $C$ is chordal.   
\end{proof}

We are almost ready to show tournaments with chordal backedge graphs are not $\vec{\chi}$-bounded. We need a lemma from \cite{NGUYEN2025146}.

\begin{lemma}[\cite{NGUYEN2025146}]
\label{lem:boundlemma}
For any tournament $T$ and ordering $<$ of $V(T)$,
\[
\frac{\chi(B(T,<))}{\omega(B(T,<))} \le \vec{\chi}(T) \le \chi(B(T,<)).
\]
\end{lemma}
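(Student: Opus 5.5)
The upper bound is immediate from the definitions, and the lower bound will follow by showing that each colour class of an optimal dicolouring induces a perfect subgraph of $B = B(T,<)$.

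For the upper bound $\vec{\chi}(T) \le \chi(B(T,<))$, I would appeal directly to Lemma~\ref{lem:minchromD}: $B(T,<)$ is one of the backedge graphs over which the minimum is taken. Alternatively, it can be checked from scratch. Take a proper colouring of $B$. Each colour class $X$ is independent in $B$, so every arc of $T[X]$ goes forward with respect to $<$. Hence $<$ is a topological ordering of $T[X]$, and $T[X]$ is acyclic.

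For the lower bound, set $k = \vec{\chi}(T)$ and fix a partition $X_1, \dots, X_k$ of $V(T)$ into sets each inducing an acyclic subtournament. The plan is to show $\chi(B[X_i]) \le \omega(B)$ for each $i$. Colouring each $B[X_i]$ with its own palette then gives $\chi(B) \le k\,\omega(B)$, which rearranges to the claimed inequality $\chi(B)/\omega(B) \le \vec{\chi}(T)$.

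The key step is the structure of $B[X_i]$. An acyclic tournament is transitive, so $T[X_i]$ determines a total order $<_T$ on $X_i$ with $u \to v$ if and only if $u <_T v$. The edges of $B[X_i]$ are exactly the pairs $u < v$ with $v \to u$, that is, $v <_T u$. So $B[X_i]$ is the graph of pairs on which the two linear orders $<$ and $<_T$ disagree, i.e.\ a permutation graph. Explicitly, define $u \prec v$ if and only if $u < v$ and $v <_T u$. This relation is the intersection of $<$ with the reverse of $<_T$, hence transitive, so it is a strict partial order. Its comparability graph is exactly $B[X_i]$. By Lemma~\ref{lem:compperf}, $B[X_i]$ is perfect, so $\chi(B[X_i]) = \omega(B[X_i]) \le \omega(B)$, completing the argument.

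I do not expect a serious obstacle here. The only point needing care is the identification of $B[X_i]$ as a comparability graph via the intersection of two linear orders, and that is a one-line transitivity check. Note that this is the same mechanism as in Lemma~\ref{lem:compgraph}.
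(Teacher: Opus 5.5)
Your proof is correct and complete. The paper gives no proof of its own here (it imports the lemma from Nguyen, Scott, and Seymour), and yours is the standard argument: the upper bound is Lemma~\ref{lem:minchromD} applied to $B(T,<)$. For the lower bound, each acyclic colour class $X_i$ is a transitive subtournament, so $B[X_i]$ is the permutation graph of the two linear orders $<$ and $<_T$ and hence a comparability graph. This is an instance of the fact, quoted in the introduction from \cite{bigNote}, that backedge graphs of comparability digraphs are comparability graphs. So $B[X_i]$ is perfect with $\chi(B[X_i]) \le \omega(B)$, and summing over the $\vec{\chi}(T)$ classes gives $\chi(B) \le \vec{\chi}(T)\,\omega(B)$.
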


Now we show that the class of tournaments with chordal backedge graphs are not $\vec{\chi}$-bounded.

By Lemma \ref{lem:burling}, for every positive integer $k$, there exists a tournament $T$ such that there is an ordering $<_{1}$ of $T$ such that $B(T,<_{1}) = B$ where $B$ is a triangle-free graph with chromatic number at least $k$, and further there is an ordering $<_{2}$ such that $B(T,<_{2}) = C$ where $C$ is a chordal graph. Thus the clique number of $T$ is at most $2$, as seen by $B$, and $\chi(B) \geq k$, thus by Lemma \ref{lem:boundlemma} we have that $\vec{\chi}(T) \geq \frac{k}{2}$. This yields a construction of tournaments with clique number at most 2 and unbounded chromatic number. However, each tournament we construct has a chordal backedge graph, and thus the class of tournaments with chordal backedge graphs is not $\vec{\chi}$-bounded.

\bibliographystyle{plain}
\bibliography{bib}

@article{davies,
  title={Improved bounds for colouring circle graphs},
  author={Davies, James},
  journal={Proceedings of the American Mathematical Society},
  volume={150},
  number={12},
  pages={5121--5135},
  year={2022}
}

@book{brandstadt1999graph,
  title={Graph classes: a survey},
  author={Brandst{\"a}dt, Andreas and Le, Van Bang and Spinrad, Jeremy P},
  year={1999},
  publisher={SIAM}
}

@article{gyarfas1987problems,
  title={Problems from the world surrounding perfect graphs},
  author={Gy{\'a}rf{\'a}s, Andr{\'a}s},
  journal={Applicationes Mathematicae},
  volume={19},
  number={3-4},
  pages={413--441},
  year={1987},
  publisher={Polska Akademia Nauk. Instytut Matematyczny PAN}
}

@article{perfectIntoComparability,
  title={Partitioning perfect graphs into comparability graphs},
  author={Gy{\'a}rf{\'a}s, Andr{\'a}s and Marits, M{\'a}rton and T{\'o}th, G{\'e}za},
  journal={arXiv preprint, arXiv:2408.13523},
  year={2024}
}

@article{crossing,
  title={Some results and problems on tournament structure},
  author={Nguyen, Tung and Scott, Alex and Seymour, Paul},
  journal={Journal of Combinatorial Theory, Series B},
  volume={173},
  pages={146--183},
  year={2025},
  publisher={Elsevier}
}

@article{original,
  title={Clique number of tournaments},
  author={Aboulker, Pierre and Aubian, Guillaume and Charbit, Pierre and Lopes, Raul},
  journal={arXiv preprint, arXiv:2310.04265},
  year={2023}
}

@book{diestel,
  title={Graph theory},
  author={Diestel, Reinhard},
  volume={173},
  year={2025},
  publisher={Springer Nature}
}

@article{neumann,
  title={The dichromatic number of a digraph},
  author={Neumann-Lara, Victor},
  journal={Journal of Combinatorial Theory, Series B},
  volume={33},
  number={3},
  pages={265--270},
  year={1982},
  publisher={Elsevier}
}

@article{separating,
  title={Separating polynomial $\chi$-boundedness from $\chi$-boundedness},
  author={Bria{\'n}ski, Marcin and Davies, James and Walczak, Bartosz},
  journal={Combinatorica},
  volume={44},
  number={1},
  pages={1--8},
  year={2024},
  publisher={Springer}
}

@article{strongperfect,
  title={The strong perfect graph theorem},
  author={Chudnovsky, Maria and Robertson, Neil and Seymour, Paul and Thomas, Robin},
  journal={Annals of mathematics},
  pages={51--229},
  year={2006},
  publisher={JSTOR}
}

@article{substitution,
  title={Substitution and $\chi$-boundedness},
  author={Chudnovsky, Maria and Penev, Irena and Scott, Alex and Trotignon, Nicolas},
  journal={Journal of Combinatorial Theory, Series B},
  volume={103},
  number={5},
  pages={567--586},
  year={2013},
  publisher={Elsevier}
}

@inproceedings{berge,
  title={Les problemes de coloration en th{\'e}orie des graphes},
  author={Berge, Claude},
  booktitle={Annales de l'ISUP},
  volume={9},
  number={2},
  pages={123--160},
  year={1960}
}

@article{gyarfasCircle,
  title={On the chromatic number of multiple interval graphs and overlap graphs},
  author={Gy{\'a}rf{\'a}s, Andr{\'a}s},
  journal={Discrete mathematics},
  volume={55},
  number={2},
  pages={161--166},
  year={1985},
  publisher={Elsevier}
}

@article{mccarty,
  title={Circle graphs are quadratically $\chi$-bounded.},
  author={Davies, James and McCarty, Rose},
  journal={Bulletin of the London Mathematical Society},
  volume={53},
  number={3},
  year={2021}
}

@article{kostochka,
  title={Upper bounds on the chromatic number of graphs},
  author={Kostochka, Alexandr V},
  journal={Trudy Inst. Mat.(Novosibirsk)},
  volume={10},
  number={Modeli i Metody Optim.},
  pages={204--226},
  year={1988}
}

@article{cliquewidth,
	author = {Bonamy, Marthe and Pilipczuk, Micha\l{}},
	journal = {Advances in Combinatorics},
	doi = {10.19086/aic.13668},
	year = {2020},
	month = {jul 10},
	title = {Graphs of bounded cliquewidth are polynomially $\chi$-bounded},
}

@article{Lgraphs,
title = {On bounding the chromatic number of L-graphs},
journal = {Discrete Mathematics},
volume = {154},
number = {1},
pages = {179-187},
year = {1996},
issn = {0012-365X},
doi = {https://doi.org/10.1016/0012-365X(95)00316-O},
url = {https://www.sciencedirect.com/science/article/pii/0012365X9500316O},
author = {Sean McGuinness},
}

@article{GroundedLgraphs,
author = {Davies, James and Krawczyk, Tomasz and McCarty, Rose and Walczak, Bartosz},
title = {Grounded L-Graphs Are Polynomially $\chi$-Bounded},
year = {2023},
issue_date = {Dec 2023},
publisher = {Springer-Verlag},
address = {Berlin, Heidelberg},
volume = {70},
number = {4},
issn = {0179-5376},
url = {https://doi.org/10.1007/s00454-023-00592-z},
doi = {10.1007/s00454-023-00592-z},
journal = {Discrete Comput. Geom.},
month = nov,
pages = {1523–1550},
numpages = {28},
}

@article{bousquetspaper,
author = {Bousquet, Nicolas and Thomass\'{e}, St\'{e}phan},
title = {Scott's induced subdivision conjecture for maximal triangle-free graphs},
year = {2012},
issue_date = {July 2012},
publisher = {Cambridge University Press},
address = {USA},
volume = {21},
number = {4},
issn = {0963-5483},
url = {https://doi.org/10.1017/S0963548312000065},
doi = {10.1017/S0963548312000065},
journal = {Comb. Probab. Comput.},
month = jul,
pages = {512–514},
numpages = {3}
}

@article{survey,
author = {Scott, Alex and Seymour, Paul},
title = {A survey of $\chi$-boundedness},
journal = {Journal of Graph Theory},
volume = {95},
number = {3},
pages = {473-504},
doi = {https://doi.org/10.1002/jgt.22601},
url = {https://onlinelibrary.wiley.com/doi/abs/10.1002/jgt.22601},
eprint = {https://onlinelibrary.wiley.com/doi/pdf/10.1002/jgt.22601},
year = {2020}
}

@article{Nesetril2020ClusteringPO,
  title={Clustering powers of sparse graphs},
  author={Jaroslav Nesetril and Patrice Ossona de Mendez and Michał Pilipczuk and Xuding Zhu},
  journal={Electron. J. Comb.},
  year={2020},
  volume={27},
  pages={4},
  url={https://api.semanticscholar.org/CorpusID:212634020}
}

@article{lindapaper,
title = {Reuniting $\chi$-boundedness with polynomial $\chi$-boundedness},
journal = {Journal of Combinatorial Theory, Series B},
volume = {176},
pages = {30-73},
year = {2026},
issn = {0095-8956},
doi = {https://doi.org/10.1016/j.jctb.2025.08.002},
url = {https://www.sciencedirect.com/science/article/pii/S0095895625000589},
author = {Maria Chudnovsky and Linda Cook and James Davies and Sang-il Oum},
}

@article{nolongholevtxminor,
title = {Classes of graphs with no long cycle as a vertex-minor are polynomially $\chi$-bounded},
journal = {Journal of Combinatorial Theory, Series B},
volume = {140},
pages = {372-386},
year = {2020},
issn = {0095-8956},
doi = {https://doi.org/10.1016/j.jctb.2019.06.001},
url = {https://www.sciencedirect.com/science/article/pii/S0095895619300590},
author = {Ringi Kim and O-joung Kwon and Sang-il Oum and Vaidy Sivaraman},
}

@article{bourneuf2025bounded,
  title={Bounded twin-width graphs are polynomially $\chi$-bounded},
  author={Bourneuf, Romain and Thomass{\'e}, St{\'e}phan},
  journal={Advances in Combinatorics},
  year={2025}
}

@article{BERGER20131,
title = {Tournaments and colouring},
journal = {Journal of Combinatorial Theory, Series B},
volume = {103},
number = {1},
pages = {1-20},
year = {2013},
issn = {0095-8956},
doi = {10.1016/j.jctb.2012.08.003},
url = {https://doi.org/10.1016/j.jctb.2012.08.003},
author = {Eli Berger and Krzysztof Choromanski and Maria Chudnovsky and Jacob Fox and Martin Loebl and Alex Scott and Paul Seymour and Stéphan Thomassé},
}

@article{bigNote,
  title={Decomposing tournaments into comparability graphs},
  author={Pierre Aboulker and Logan Crew and Julien Duron and Xinyue Fan and Hugo Jacob and Rémy Kimbrough and Hidde Koerts and Benjamin Moore and Sophie Spirkl },
  journal={ar{X}iv:2606.07748},
  year={2026}
}

@article{BurlingGraphRevisited2,
title = {Burling graphs revisited, part {II}: Structure},
journal = {European Journal of Combinatorics},
volume = {116},
pages = {103849},
year = {2024},
issn = {0195-6698},
doi = {https://doi.org/10.1016/j.ejc.2023.103849},
url = {https://www.sciencedirect.com/science/article/pii/S0195669823001671},
author = {Pegah Pournajafi and Nicolas Trotignon},
}

@article{BurlingGraphRevisted1,
title = {Burling graphs revisited, part {I}: New characterizations},
journal = {European Journal of Combinatorics},
volume = {110},
pages = {103686},
year = {2023},
issn = {0195-6698},
doi = {https://doi.org/10.1016/j.ejc.2023.103686},
url = {https://www.sciencedirect.com/science/article/pii/S0195669823000033},
author = {Pegah Pournajafi and Nicolas Trotignon},
}

@phdthesis{Burling1965,
  author = {Burling, James Perkins},
  title = {On coloring problems of families of polytopes},
  school = {University of Colorado},
  address = {Boulder, Colorado},
  year = {1965}
}

@article{NGUYEN2025146,
title = {Some results and problems on tournament structure},
journal = {Journal of Combinatorial Theory, Series B},
volume = {173},
pages = {146-183},
year = {2025},
issn = {0095-8956},
doi = {https://doi.org/10.1016/j.jctb.2025.02.002},
url = {https://www.sciencedirect.com/science/article/pii/S0095895625000097},
author = {Tung Nguyen and Alex Scott and Paul Seymour},

}

@article{FulkersonGross1965,
  author  = {Fulkerson, D. R. and Gross, O. A.},
  title   = {Incidence matrices and interval graphs},
  journal = {Pacific Journal of Mathematics},
  volume  = {15},
  number  = {3},
  pages   = {835--855},
  year    = {1965}
}

\end{document}